\newtheorem{theorem}{Theorem}[section]
\newtheorem{lemma}[theorem]{Lemma}
\newtheorem{corollary}[theorem]{Corollary}
\newtheorem*{theorem*}{Theorem}
\newtheorem*{corollary*}{Corollary}
\newcommand{\defn}[1]{\emph{#1}}
\def\Z{\mathbb{Z}}
\def\R{\mathbb{R}}
\def\N{\mathbb{N}}
\def\simp{\Delta}
\def\wa{\widetilde{W}}
\def\ac{A_{\circ}}
\def\Q{Q^{\vee}}
\def\m{m}
\def\I{\mathcal{I}}
\def\J{\mathcal{J}}
\def\H{\mathcal{H}}
\def\alc{\mathsf{Alc}}
\def\park{\mathsf{Park}_{\Phi}}
\def\parkm{\mathsf{Park}_{\Phi}^{(m)}}
\def\pta{\Theta}
\def\ptt{\Gamma}
\def\A{\mathcal{A}}
\def\rd{R_{\rm{dom}}}
\def\re{R'}
\def\rmin{R_{\rm{min}}}
\def\ar{\widetilde{\Phi}}
\def\wm{\waf_{\rm{min}}}
\newcommand{\op}[1]{\operatorname{#1}}
\def\pf{\mathcal{PF}}
\def\D{\mathcal{D}}
\newcommand{\rat}{p}
\def\h{\mathrm{ht}}
\def\lvl{\mathsf{levels}}
\def\modn{\text{ }\mathrm{mod}\text{ }n}
\newcommand{\packrat}{\pf_{\rat/n}}
\newcommand{\sommers}{\mathcal{S}_{\Phi}^{\rat}}
\newcommand{\wf}{\waf_{\rat}}
\newcommand{\waf}{\widetilde{w}}
\newcommand{\cwl}{\Lambda^{\vee}}
\newcommand{\cw}{\omega^{\vee}}
\newcommand{\inv}{\mathsf{Inv}}
\newcommand{\wratdom}{\wa^{\rat}_{\mathrm{dom}}}
\newcommand{\drw}{\mathsf{drw}}
\newcommand{\ind}{\op{ind}}
\newcommand{\ptd}{\epsilon}
\newcommand{\atd}{\delta}
\newcommand{\ttpf}{\chi}
\newcommand{\summed}{\mathsf{Sum}}
\newcommand{\hr}{\theta}
\begin{document}

\title{From Anderson to Zeta}
\author{Marko Thiel}
\address{Department of Mathematics, University of Zürich, Winterthurerstrasse 190, 8057 Zürich, Switzerland}

\begin{abstract} For an irreducible crystallographic root system $\Phi$ and a positive integer $p$ relatively prime to the Coxeter number $h$ of $\Phi$,
we give a natural bijection $\mathcal{A}$ from the set $\widetilde{W}^{p}$ of affine Weyl group elements with no inversions of height $p$ to the finite torus $\check{Q}/p\check{Q}$. Here $\check{Q}$ is the coroot lattice of $\Phi$.
This bijection is defined uniformly for all irreducible crystallographic root systems $\Phi$ and is equivalent to the \emph{Anderson map} $\mathcal{A}_{GMV}$ defined by Gorsky, Mazin and Vazirani when $\Phi$ is of type $A_{n-1}$.\\
\\
Specialising to $p=mh+1$, we use $\mathcal{A}$ to define a uniform $W$-set isomorphism $\zeta$ from the finite torus $\check{Q}/(mh+1)\check{Q}$ to the set of $m$-nonnesting parking functions $\mathsf{Park}_{\Phi}^{(m)}$ of $\Phi$.
The map $\zeta$ is equivalent to the \emph{zeta map} $\zeta_{HL}$ of Haglund and Loehr when $m=1$ and $\Phi$ is of type $A_{n-1}$.
%
%
\end{abstract}
%
\maketitle
\section{Introduction}
The aim of this article is to describe uniform generalisations to all irreducible crystallographic root systems $\Phi$ of two bijections that arose from the study of the Hilbert series $\mathcal{DH}(n;q,t)$ of the space of diagonal harmonics of the symmetric group $S_n$.
These bijections are the \defn{Anderson map} $\A_{GMV}$ of Gorsky, Mazin and Vazirani \cite[Section 3.1]{gorsky14affine} and the \defn{zeta map} $\zeta_{HL}$ of Haglund and Loehr \cite[Theorem 5.6]{haglund08catalan}.
\subsection{The Hilbert series of the space of diagonal harmonics}
The space of diagonal harmonics of the symmetric group $S_n$ is a well-studied object in algebraic combinatorics.
Its Hilbert series has two combinatorial interpretations \cite[Conjecture 5.2]{haglund08catalan}, \cite{carlsson16shuffle}: 
$$\mathcal{DH}(n;q,t)=\sum_{(P,\sigma)\in\pf_n}q^{\op{dinv}^\prime(P,\sigma)}t^{\op{area}(P,\sigma)}=\sum_{(w,D)\in\D_n}q^{\op{area}^\prime(w,D)}t^{\op{bounce}(w,D)},$$
where $\pf_n$ is the set of \defn{parking functions} of length $n$ viewed as vertically labelled Dyck paths and $\D_n$ is the set of \defn{diagonally labelled Dyck paths} of length $n$.
There is a bijection $\zeta_{HL}$ due to Haglund and Loehr \cite[Theorem 5.6]{haglund08catalan} that maps $\pf_n$ to $\D_n$ and sends the bistatistic $(\op{dinv}^\prime,\op{area})$ to $(\op{area}^\prime,\op{bounce})$, demonstrating the second equality.\\
\\
In their study of \defn{rational parking functions}, Gorsky, Mazin and Vazirani introduced the \defn{Anderson map} $\A_{GMV}$ as a bijection from the set of \defn{$\rat$-stable affine permutations} $\widetilde{S}^{\rat}_n$ to the set of $\rat/n$-parking functions $\pf_{\rat/n}$. Here $\rat$ is any positive integer relatively prime to $n$ and $\pf_{n+1/n}=\pf_n$.
They used it to define a combinatorial Hilbert series for the set of $\rat$-stable affine permutations that generalises $\mathcal{DH}(n;q,t)$ and related $\A_{GMV}$ to $\zeta_{HL}$ \cite[Definition 3.26, Theorem 5.3]{gorsky14affine}.
Their approach builds on the work by Gorsky and Mazin on compactified Jacobians of plane curve singularities with one Puiseux pair \cite{gorsky13jacobian,gorsky14jacobian}.
\subsection{Beyond type $A_{n-1}$}
One can view all the objects $\pf_n$, $\D_n$, $\widetilde{S}_n^{\rat}$ and $\pf_{\rat/n}$ as well as the maps $\zeta_{HL}$ and $\A_{GMV}$ as being associated with the root system of type $A_{n-1}$.
We will generalise both the zeta map $\zeta_{HL}$ of Haglund and Loehr and the Anderson map $\A_{GMV}$ of Gorsky, Mazin and Vazirani to all irreducible crystallographic root systems $\Phi$.
We work at three different levels of generality, in order from most general to least general these are the \defn{rational} level, the \defn{Fuß-Catalan} level, and the \defn{Coxeter-Catalan} level.
All the objects we now mention will be defined in later sections.
\subsection{The rational level}
We fix a positive integer $\rat$ that is relatively prime to the Coxeter number $h$ of $\Phi$.
We define the \defn{Anderson map} $\A$ as a bijection from the set $\wa^{\rat}$ of \defn{$\rat$-stable affine Weyl group elements} to the \defn{finite torus} $\Q/\rat\Q$. Here $\Q$ is the coroot lattice of $\Phi$.
If $\Phi$ is of type $A_{n-1}$ it reduces to the Anderson map $\A_{GMV}$ that maps the set $\widetilde{S}_n^{\rat}$ of $\rat$-stable affine permutations to the set of rational parking functions $\pf_{\rat/n}$.
\subsection{The Fuß-Catalan level}
At the Fuß-Catalan level, we specialise to $\rat=mh+1$ for some positive integer $m$.
We consider an affine hyperplane arrangement in the ambient space of $\Phi$ called the \defn{$m$-Shi arrangement}.
Every region $R$ of that arrangement has a unique minimal alcove $\waf_R\ac$, and the set of minimal alcoves of regions of the $m$-Shi arrangement corresponds to $\wa^{mh+1}$.
The Anderson map $\A$ gives a bijection from $\wa^{mh+1}$ to the finite torus $\Q/(mh+1)\Q$.\\
\\
The set $\park^{(\m)}$ of \defn{$m$-nonnesting parking functions} was defined by Rhoades \cite{rhoades12parking} as a model for the set of regions of the $m$-Shi arrangement that carries an action of the \defn{Weyl group} $W$.
The finite torus $\Q/(mh+1)\Q$ also has a natural $W$-action.
Using the inverse $\A^{-1}$ of the Anderson map we define a $W$-set isomorphism $\zeta$ from $\Q/(mh+1)\Q$ to $\park^{(\m)}$. We call this the zeta map $\zeta$.
\subsection{The Coxeter-Catalan level}
At the Coxeter-Catalan level, we specialise further to $m=1$. Thus we have $\rat=h+1$.
In the case where $\Phi$ is of type $A_{n-1}$, we identify the combinatorial objects $\pf_n$ and $\D_n$ 
with the finite torus $\Q/(h+1)\Q$ and the set $\park$ of nonnesting parking functions of $\Phi$ respectively.
With these identifications, our zeta map $\zeta$ coincides with the zeta map $\zeta_{HL}$.
\subsection{Structure of the article}
In Section \ref{bassec} we give background on finite and affine crystallographic root systems and their Weyl groups.
Sections \ref{classical} to \ref{andgmv} are devoted to defining the combinatorial Anderson map $\A_{GMV}$.
Section \ref{and} defines the uniform Anderson map $\A$ as a composition of various maps that have already appeared in the literature in some form.
Then Section \ref{and+and} shows that in the case where $\Phi$ is of type $A_{n-1}$, the uniform Anderson map $\A$ is equivalent to the combinatorial Anderson map $\A_{GMV}$.\\
\\
Sections \ref{shiarr} and \ref{minsec} introduce the $\m$-Shi arrangement and show that its minimal alcoves correspond to the $(mh+1)$-stable affine Weyl group elements in $\wa^{mh+1}$.
Then Sections \ref{diag} and \ref{zetahl} define the combinatorial zeta map $\zeta_{HL}$ due to Haglund and Loehr.
Section \ref{zetasec} defines the uniform zeta map $\zeta$ and then Section \ref{zeta+zeta} shows that it reduces to $\zeta_{HL}$ if $m=1$ and $\Phi$ is of type $A_{n-1}$.
\section{Root systems and their Weyl groups}\label{bassec}
Let $\Phi$ be an irreducible (finite) crystallographic root system of rank $r$ with ambient space $V$. So $V$ is a real vector space of dimension $r$ with an inner product $\langle\cdot,\cdot\rangle$. For background on root systems see \cite{humphreys90reflection}. Choose a set of simple roots $\simp$ for $\Phi$ and let $\Phi^+$ be the corresponding set of positive roots.
Let $W$ be the Weyl group of $\Phi$ and let $S$ be the set of simple reflections corresponding to $\simp$. Then $S$ generates $W$ and $(W,S)$ is a Coxeter system. For a positive integer $n$, write $[n]:=\{1,2,\ldots,n\}$.\\
\begin{mdframed}[style=Example]
\textbf{Example.} The root system of type $A_{n-1}$ is 
\[\Phi=\{e_i-e_j:i,j\in[n],i\neq j\}.\]
It has rank $n-1$ and ambient space 
\[V=\{(x_1,x_2,\ldots,x_n)\in\R^n:\sum_{i=1}^nx_i=0\}.\]
We let $\alpha_i=e_i-e_{i+1}$ for $i\in[n-1]$ and choose $\Delta=\{\alpha_1,\alpha_2,\ldots,\alpha_{n-1}\}$.
The Weyl group $W$ of $\Phi$ is the symmetric group $S_n$ on $n$ letters that acts on $V$ by permuting coordinates.
It is generated by $S=\{s_1,s_2,\ldots,s_{n-1}\}$, where $s_i=(i\text{ }i+1)$ is the $i$-th adjacent transposition, corresponding to the reflection through the linear hyperplane orthogonal to $\alpha_i$.
\end{mdframed}
\vspace{1em}
For $k\in\Z$ and $\alpha\in\Phi$, define the affine hyperplane
$$H_{\alpha}^k:=\{x\in V: \langle x,\alpha\rangle=k\}.$$
\subsection{The Coxeter arrangement}
The \defn{Coxeter arrangement} is the central hyperplane arrangement in $V$ given by all the linear hyperplanes $H_{\alpha}:=H_{\alpha}^0$ for $\alpha\in\Phi^+$.
The complement of this arrangement falls apart into connected components which we call \defn{chambers}.
The Weyl group $W$ acts simply transitively on the chambers. Thus we define the \defn{dominant chamber} by 
$$C:=\{x\in V: \langle x,\alpha\rangle>0\text{ for all }\alpha\in\simp\}$$
and write any chamber as $wC$ for a unique $w\in W$. 
\subsection{The affine Coxeter arrangement and the affine Weyl group}
From now on we will assume that $\Phi$ is irreducible.
The \defn{root order} on $\Phi^+$ is the partial order defined by 
$\alpha\leq\beta$ if and only if $\beta-\alpha$ can be written as a sum of positive roots.
The set of positive roots $\Phi^+$ with this partial order is called the \defn{root poset} of $\Phi$.
It has a unique maximal element, the \defn{highest root} $\hr$.\\
\\
The \defn{affine Coxeter arrangement} is the affine hyperplane arrangement in $V$ given by all the affine hyperplanes $H_{\alpha}^k$ for $\alpha\in\Phi$ and $k\in\Z$.
The complement of this arrangement falls apart into connected components which are called \defn{alcoves}. They are all isometric simplices. We call an alcove \defn{dominant} if it is contained in the dominant chamber.
Define $s_{\alpha}^k$ as the reflection through the affine hyperplane $H_{\alpha}^k$. That is,
$$s_{\alpha}^k(x):=x-2\frac{\langle x,\alpha\rangle-k}{\langle\alpha,\alpha\rangle}\alpha.$$
We will also write $s_{\alpha}$ for the linear reflection $s_{\alpha}^0$.\\
\\
Let the \defn{affine Weyl group} $\wa$ be the group of affine automorphisms of $V$ generated by all the reflections through hyperplanes in the affine Coxeter arrangement,
that is
$$\wa:=\langle s_{\alpha}^k: \alpha\in\Phi\text{ and }k\in\Z\rangle.$$
The affine Weyl group $\wa$ acts simply transitively on the alcoves of the affine Coxeter arrangement. Thus we define the \defn{fundamental alcove} by
$$\ac:=\{x\in V: \langle x,\alpha\rangle>0\text{ for all }\alpha\in\simp\text{ and }\langle x,\hr\rangle<1\}$$
and write any alcove of the affine Coxeter arrangement as $\waf\ac$ for a unique $\waf\in\wa$.\\
\\
If we define $\widetilde{S}:=S\cup\{s_{\hr}^1\}$, then $(\wa,\widetilde{S})$ is a Coxeter system. \\
In particular, we may write any $\waf\in\wa$ as a word in the generators in $\widetilde{S}$, called reduced if its length is minimal, and define the length $l(\waf)$ of $\waf$ as the length of any reduced word for it.\\
\\
For a root $\alpha\in\Phi$, its \defn{coroot} is defined as $\alpha^{\vee}=2\frac{\alpha}{\langle\alpha,\alpha\rangle}$.
The set $\Phi^{\vee}=\{\alpha^{\vee}:\alpha\in\Phi\}$ is itself an irreducible crystallographic root system, called the \defn{dual root system} of $\Phi$.
Clearly $\Phi^{\vee\vee}=\Phi$.\\
\\
The \defn{root lattice} $Q$ of $\Phi$ is the lattice in $V$ spanned by all the roots in $\Phi$.
The \defn{coroot lattice} $\Q$ of $\Phi$ is the lattice in $V$ spanned by all the coroots in $\Phi^{\vee}$.
It is not hard to see that $\wa$ acts on the coroot lattice. To any $\mu\in\Q$, there corresponds the translation 
\begin{align*}
 t_{\mu}:V&\rightarrow V\\
 x&\mapsto x+\mu.
\end{align*}
\begin{figure}[h]
\begin{center}
 \resizebox*{9cm}{!}{\includegraphics{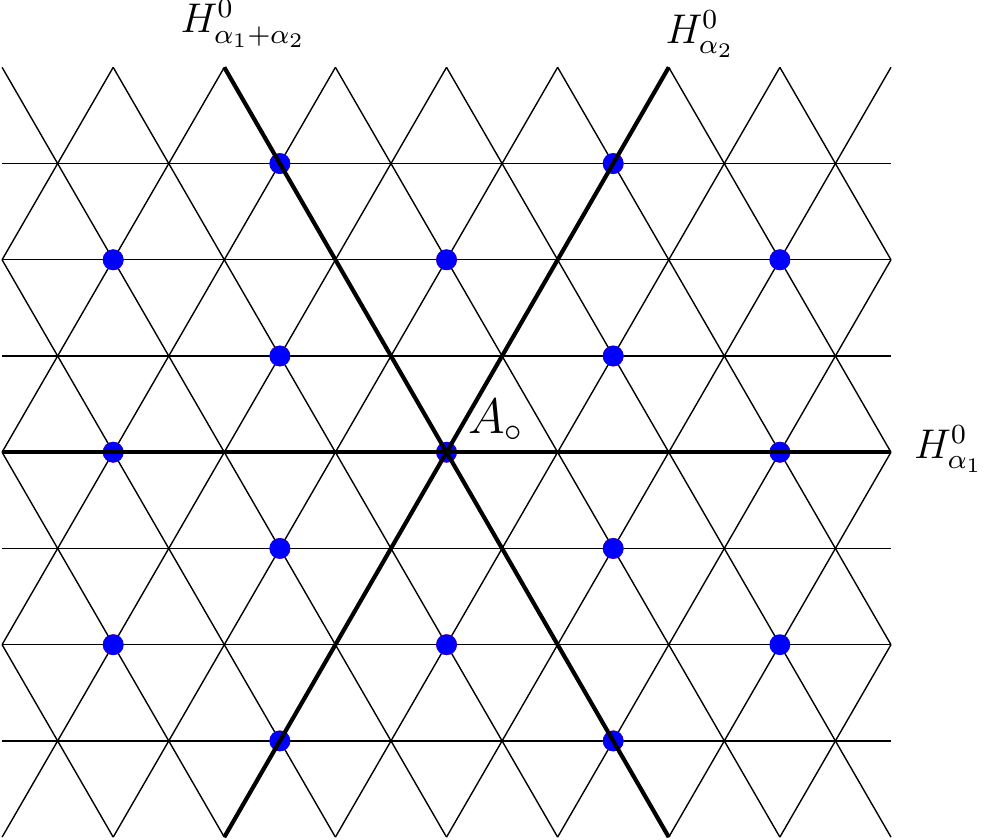}}\\
\end{center}
\caption{The affine Coxeter arrangement of type $A_2$. The points in the coroot lattice $\Q$ are marked as blue dots.}
\end{figure}
\\
If we identify $\Q$ with the corresponding group of translations acting on the affine space $V$ then we may write $\wa=W\ltimes\Q$ as a semidirect product.
In particular, we may write any $\waf\in\wa$ as $\waf=wt_{\mu}$ for unique $w\in W$ and $\mu\in\Q$.\\
\\
For an alcove $\waf\ac$ and a root $\alpha\in\Phi$ there is a unique integer $k$ such that $k<\langle x,\alpha\rangle<k+1$ for all $x\in \waf\ac$.
We denote this integer by $k(\waf,\alpha)$. We call the tuple $(k(\waf,\alpha))_{\alpha\in\Phi^+}$ the \defn{address} of the alcove $\waf\ac$.\\
\\
Notice that we have $k(\waf,-\alpha)=-k(\waf,\alpha)-1$ and $k(w\waf,w(\alpha))=k(\waf,\alpha)$ for all $\alpha\in\Phi$ and $w\in W$. Also note that if $k(\waf,\alpha)=k(\waf',\alpha)$ for all $\alpha\in\Phi^+$, then $\waf=\waf'$.
\subsection{Polyhedra}
A \defn{half-space} in $V$ is subset of the form
\[\H=\{x\in V:\lambda(x)<l\}\]
for some linear functional $\lambda$ in the dual space $V^{*}$ of $V$ and a real number $l\in\R$.
A \defn{polyhedron} in $V$ is any nonempty subset of $V$ that is defined as the intersection of a finite number of half-spaces.
We will also consider the closure $\overline{P}$ of any polyhedron $P$ a polyhedron.\\
\\
Any hyperplane corresponding to an irredundant inequality defining a polyhedron $P$ is considered a \defn{wall} of $P$.
Its intersection with $\overline{P}$ is called a \defn{facet} of $P$.
A wall of $P$ is called a \defn{floor} of $P$ if it does not contain the origin and separates $P$ from the origin.
\subsection{Affine roots}
We may understand $\wa$ in terms of its action on the set of \defn{affine roots} $\ar$ of $\Phi$. To do this, let $\delta$ be a formal variable and define $\widetilde{V}:=V\oplus\R\delta$.
Define the set of affine roots as
$$\ar:=\{\alpha+k\delta:\alpha\in\Phi\text{ and }k\in\Z\}.$$
If $\waf\in\wa$, write it as $\waf=wt_{\mu}$ for unique $w\in W$ and $\mu\in\Q$ and define
$$\waf(\alpha+k\delta)=w(\alpha)+(k-\langle\mu,\alpha\rangle)\delta.$$
This defines an action of $\wa$ on $\ar$. It imitates the action of $\wa$ on the half-spaces of $V$ defined by the hyperplanes of the affine Coxeter arrangement.
To see this, define the half-space
$$\H_{\alpha+k\delta}:=\{x\in V:\langle x,\alpha\rangle>-k\}.$$
Then for $\waf\in\wa$ we have $\waf(\H_{\alpha+k\delta})=\H_{\beta+l\delta}$ if and only if $\waf(\alpha+k\delta)=\beta+l\delta$.
Define the set of \defn{positive affine roots} as
$$\ar^+:=\{\alpha+k\delta:\alpha\in\Phi^+\text{ and }k\geq0\}\cup\{\alpha+k\delta:\alpha\in-\Phi^+\text{ and }k>0\},$$
the set of affine roots corresponding to half-spaces that contain $\ac$. So $\ar$ is the disjoint union of $\ar^+$ and $-\ar^+$.
Define the set of \defn{simple affine roots} as
$$\widetilde{\simp}:=\simp\cup\{-\hr+\delta\},$$
the set of affine roots corresponding to half-spaces that contain $\ac$ and share one of its walls. We will also write $\alpha_0:=-\hr+\delta$.\\
\\
For $\waf\in\wa$, we say that $\alpha+k\delta\in\ar^+$ is an \defn{inversion} of $\waf$ if $\waf(\alpha+k\delta)\in-\ar^+$.
Define 
\[\inv(\waf):=\ar^+\cap\waf^{-1}(-\ar^+)\]
as the set of inversions of $\waf$.\\
\\
The following lemmas are well-known.
\begin{lemma}\label{arsep}
 The positive affine root $\alpha+k\delta\in\ar^+$ is an inversion of $\waf$ if and only if the hyperplane $H_{\alpha}^{-k}$ separates $\waf^{-1}\ac$ from $\ac$.
 \begin{proof}
  If $\alpha+k\delta\in\ar^+\in\inv(\waf)$, then $\ac\subseteq\H_{\alpha+k\delta}$ and $\ac\nsubseteq \waf(\H_{\alpha+k\delta})$.
  Thus $\waf^{-1}\ac\nsubseteq\H_{\alpha+k\delta}$ and therefore $H_{\alpha}^{-k}$ separates $\waf^{-1}\ac$ from $\ac$.\\
  \\
  Conversely, if $\alpha+k\delta\in\ar^+$ and $H_{\alpha}^{-k}$ separates $\waf^{-1}\ac$ from $\ac$, then $\ac\subseteq\H_{\alpha+k\delta}$ and $\waf^{-1}\ac\nsubseteq\H_{\alpha+k\delta}$.
  Therefore $\ac\nsubseteq \waf(\H_{\alpha+k\delta})$ and thus $\waf(\alpha+k\delta)\in-\ar^+$. So $\alpha+k\delta\in\inv(\waf)$.
 \end{proof}

\end{lemma}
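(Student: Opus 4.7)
The plan is to exploit the correspondence between positive affine roots and the half-spaces containing the fundamental alcove $\ac$, which was spelled out just before the lemma statement. In one direction, I would start from the hypothesis $\alpha+k\delta\in\inv(\waf)$, which by definition means $\alpha+k\delta\in\ar^+$ and $\waf(\alpha+k\delta)\in-\ar^+$. By the characterisation of $\ar^+$ as the set of affine roots whose half-space contains $\ac$, these two conditions translate respectively to $\ac\subseteq\H_{\alpha+k\delta}$ and $\ac\not\subseteq\H_{\waf(\alpha+k\delta)}$. Now I would use the key identity $\H_{\waf(\alpha+k\delta)}=\waf(\H_{\alpha+k\delta})$ recorded earlier in the subsection, giving $\ac\not\subseteq\waf(\H_{\alpha+k\delta})$, and then apply $\waf^{-1}$ to both sides to conclude $\waf^{-1}\ac\not\subseteq\H_{\alpha+k\delta}$. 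Since $\ac$ lies in $\H_{\alpha+k\delta}$ and $\waf^{-1}\ac$ does not, the bounding hyperplane $H_\alpha^{-k}$ separates the two alcoves.

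The converse is symmetric: assuming $\alpha+k\delta\in\ar^+$ (so $\ac\subseteq\H_{\alpha+k\delta}$) and that $H_\alpha^{-k}$ separates $\waf^{-1}\ac$ from $\ac$, I deduce $\waf^{-1}\ac\not\subseteq\H_{\alpha+k\delta}$, hence $\ac\not\subseteq\waf(\H_{\alpha+k\delta})=\H_{\waf(\alpha+k\delta)}$, and therefore $\waf(\alpha+k\delta)\not\in\ar^+$. Since the image of an affine root is again an affine root, this forces $\waf(\alpha+k\delta)\in-\ar^+$, giving $\alpha+k\delta\in\inv(\waf)$.

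The only subtlety — not really a difficulty, but the one place to be careful — is to justify the small step that $\waf^{-1}\ac\not\subseteq\H_{\alpha+k\delta}$ together with the fact that $\waf^{-1}\ac$ is an open alcove (hence connected and disjoint from $H_\alpha^{-k}$) really does imply $H_\alpha^{-k}$ separates it from $\ac$. This follows because an open alcove lies entirely on one side of each hyperplane of the affine Coxeter arrangement, so the only way $\waf^{-1}\ac$ fails to be contained in $\H_{\alpha+k\delta}$ is to lie wholly in its complementary open half-space, placing $\waf^{-1}\ac$ and $\ac$ on opposite sides of $H_\alpha^{-k}$. No computation beyond unwinding definitions and applying $\waf^{\pm 1}$ is required, so I expect no real obstacle.
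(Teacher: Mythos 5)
Your proof is correct and follows essentially the same route as the paper's: unwind the definition of inversion via the correspondence between positive affine roots and half-spaces containing $\ac$, use the identity $\waf(\H_{\alpha+k\delta})=\H_{\waf(\alpha+k\delta)}$, and apply $\waf^{-1}$. The only difference is that you explicitly justify the small step that an open alcove not contained in $\H_{\alpha+k\delta}$ must lie wholly in the complementary half-space, which the paper leaves implicit.
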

\begin{lemma}\label{arfl}
 If $\alpha+k\delta\in\ar^+$, $k>0$ and $\waf\in\wa$, then $\waf^{-1}(\alpha+k\delta)\in-\widetilde{\simp}$ if and only if $H_{\alpha}^{-k}$ is a floor of $\waf\ac$.
 \begin{proof}
  For the forward implication, suppose $\alpha+k\delta\in\ar^+$, $k>0$, $\waf\in\wa$ and $\waf^{-1}(\alpha+k\delta)\in-\widetilde{\simp}$. Then by Lemma \ref{arsep} the hyperplane $H_{\alpha}^{-k}$ separates $\waf\ac$ from $\ac$.
  But we also have that $\waf^{-1}(\H_{\alpha+k\delta})$ shares a wall with $\ac$, so $H_{\alpha}^{-k}$ is a wall of $\waf\ac$. Thus it is a floor of $\waf\ac$.\\
  \\
  Conversely, if $\alpha+k\delta\in\ar^+$ and $H_{\alpha}^{-k}$ is a floor of $\waf\ac$, then by Lemma \ref{arsep}, $\waf^{-1}(\alpha+k\delta)\in-\ar^+$, so $\ac\nsubseteq \waf^{-1}(\H_{\alpha+k\delta})$.
  But since $\H_{\alpha+k\delta}$ shares a wall with $\waf\ac$, $\waf^{-1}(\H_{\alpha+k\delta})$ shares a wall with $\ac$. So $\waf^{-1}(\alpha+k\delta)\in-\simp$.
 \end{proof}

\end{lemma}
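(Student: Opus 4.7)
The plan is to translate both sides of the equivalence into statements about the half-space $\H_{\alpha+k\delta}$ and transport them between $\ac$ and $\waf\ac$ via the action of $\waf$. The key dictionary entry I would set up first is: an affine root $\gamma\in\ar$ lies in $\pm\widetilde{\simp}$ if and only if the boundary hyperplane of $\H_{\gamma}$ is a wall of $\ac$, where the sign specifies which side of that wall contains $\ac$. This follows directly from the description of $\widetilde{\simp}$ as the affine roots whose half-spaces both contain $\ac$ and share a wall with it.

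For the forward implication, suppose $\waf^{-1}(\alpha+k\delta)=-\beta$ with $\beta\in\widetilde{\simp}$. Applying $\waf$ to the identity $\waf^{-1}(\H_{\alpha+k\delta})=\H_{-\beta}$, the fact that $\H_{-\beta}$ has boundary a wall of $\ac$ turns into the fact that $H_{\alpha}^{-k}$, the boundary of $\H_{\alpha+k\delta}$, is a wall of $\waf\ac$. Moreover, $\alpha+k\delta\in\ar^+$ is sent by $\waf^{-1}$ into $-\widetilde{\simp}\subseteq -\ar^+$, so $\alpha+k\delta\in\inv(\waf^{-1})$; then Lemma \ref{arsep} applied to $\waf^{-1}$ tells us $H_{\alpha}^{-k}$ separates $\waf\ac$ from $\ac$. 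Since $k>0$, this hyperplane misses the origin, and because $\ac$ and the origin both lie in $\H_{\alpha+k\delta}$, the hyperplane $H_{\alpha}^{-k}$ in fact separates $\waf\ac$ from the origin. It is therefore a floor of $\waf\ac$.

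For the reverse implication, if $H_{\alpha}^{-k}$ is a floor of $\waf\ac$ then it is in particular a wall, so its preimage $\waf^{-1}(H_{\alpha}^{-k})$ is a wall of $\ac$; by the dictionary, $\waf^{-1}(\alpha+k\delta)\in\pm\widetilde{\simp}$. Being a floor also gives that $H_{\alpha}^{-k}$ separates $\waf\ac$ from the origin, hence from $\ac$, since $\ac$ and the origin lie on the same side of $H_{\alpha}^{-k}$ (again because $k>0$ puts both in $\H_{\alpha+k\delta}$). Lemma \ref{arsep} then forces $\alpha+k\delta\in\inv(\waf^{-1})$, i.e.\ $\waf^{-1}(\alpha+k\delta)\in-\ar^+$; intersecting with $\pm\widetilde{\simp}$ leaves $-\widetilde{\simp}$, as required.

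The main obstacle I expect is bookkeeping: one must be careful to invoke Lemma \ref{arsep} for $\waf^{-1}$ rather than $\waf$, and to justify that $\ac$ and the origin really lie on the same side of $H_{\alpha}^{-k}$, which is exactly where the hypothesis $k>0$ is used (to upgrade separation from $\ac$ to separation from the origin and thus to the definition of a floor). Once the half-space dictionary and these sign conventions are fixed, the two directions are essentially mirror images of each other.
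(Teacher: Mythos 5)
Your proof is correct and takes essentially the same route as the paper's: Lemma \ref{arsep} handles the separation property while the characterisation of $\widetilde{\simp}$ as the affine roots whose half-spaces both contain $\ac$ and share a wall with it handles the wall property. You are somewhat more explicit than the paper about two points it glosses over, namely that Lemma \ref{arsep} must be invoked for $\waf^{-1}$ rather than $\waf$, and that the hypothesis $k>0$ is what upgrades ``separates from $\ac$'' to ``separates from the origin'' (and guarantees the hyperplane misses the origin), which is what the definition of a floor actually requires.
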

\subsection{The height of roots}
For $\alpha\in\Phi$, we write it in terms of the basis of simple roots as $\alpha=\sum_{i=1}^ra_i\alpha_i$ and define its \defn{height} $\h(\alpha):=\sum_{i=1}^ra_i$ as the sum of the coefficients.
Notice that $\h(\alpha)>0$ if and only if $\alpha\in\Phi^+$ and $\h(\alpha)=1$ if and only if $\alpha\in\simp$.
The highest root $\hr$ is the unique root in $\Phi$ of maximal height. Its coefficients in terms of the basis of simple roots can be found for example in \cite[Section 4.9]{humphreys90reflection}.
We define the \defn{Coxeter number} of $\Phi$ as $h:=1+\h(\hr)$.
\begin{mdframed}[style=Example]
\textbf{Example.} The highest root of the root system of type $A_{n-1}$ is $\hr=\alpha_1+\alpha_2+\ldots+\alpha_{n-1}$. It has height $n-1$.
So the Coxeter number $h$ of the root system of type $A_{n-1}$ equals $n$.
\end{mdframed}
For any integer $t$, write $\Phi_t$ for the set of roots of height $t$. Then in particular we have $\Phi_1=\simp$ and $\Phi_{h-1}=\{\hr\}$.\\
\\
Define the \defn{height} of an affine root $\alpha+k\delta$ as $\h(\alpha+k\delta)=\h(\alpha)+kh$.
So $\h(\alpha+k\delta)>0$ if and only if $\alpha+k\delta\in\ar^+$ and $\h(\alpha+k\delta)=1$ if and only if $\alpha+k\delta\in\widetilde{\simp}$.
For any integer $t$, write $\ar_t$ for the set of affine roots of height $t$.
\section{Classical rational Catalan combinatorics}\label{classical}
We aim to generalise the Anderson map $\A_{GMV}$ of Gorsky, Mazin and Vazirani \cite{gorsky14affine} to all irreducible crystallographic root systems.
This can be seen as a step towards a uniform theory of rational Catalan combinatorics. We start by expounding the theory of \defn{classical} rational Catalan combinatorics associated with type $A_{n-1}$.
\subsection{Rational Catalan numbers and rational Dyck paths}
For a positive integer $n$ and another positive integer $\rat$ relatively prime to $n$, the \defn{rational $(\rat,n)$-Catalan number} is defined as
\[\mathsf{Cat}_{\rat/n}:=\frac{1}{n+\rat}\binom{n+\rat}{n}.\]
These are generalisations of the classical Fuß-Catalan numbers: for a positive integer $m$, the rational Catalan number $\mathsf{Cat}_{(mn+1)/n}$ equals the classical Fuß-Catalan number $\mathsf{Cat}_n^{(m)}$.
It was proven by Bizley \cite{bizley54derivation} that $\mathsf{Cat}_{\rat/n}$ counts the number of \defn{rational $\rat/n$-Dyck paths}.
These are lattice paths in $\Z^2$ consisting of North and East steps that go from $(0,0)$ to $(\rat,n)$ and never go below the diagonal $y=\frac{n}{\rat}x$ of rational slope.\\
\\
For a $(\rat,n)$-Dyck path $P$ and $i\in[n]$, let $P_i$ be the $x$-coordinate of the $i$-th North step of $P$.
We may identify the path $P$ with the (weakly) increasing tuple of nonnegative integers $(P_1,P_2,\ldots,P_n)$. The condition that $P$ lies above the diagonal $y=\frac{n}{\rat}x$ translates to either 
\[P_i\leq\frac{\rat}{n}(i-1)\]
for all $i\in[n]$, or equivalently
\begin{equation}\label{ratdyck}
 \#\{i:P_i<l\}\geq\frac{nl}{\rat}
\end{equation}
\noindent
for all $l\in[\rat]$.\\
\\
The full lattice squares (boxes) between a $\rat/n$-Dyck path $P$ and the diagonal $y=\frac{n}{\rat}x$ we call \defn{area squares}.
The number of them in the $i$-th row from the bottom is the \defn{area} $a_i:=\lfloor \frac{\rat}{n}(i-1)\rfloor-P_i$ of that row.
We call the tuple $(a_1,a_2,\ldots,a_n)$ the \defn{area vector} of $P$.\\
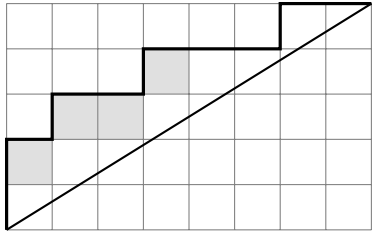
\begin{figure}[h]
\begin{center}
\begin{tikzpicture}[scale=.6]
\begin{scope}
	\fill[black!12] (0,1)--(1,1)--(1,2)--(0,2)--cycle;
	\fill[black!12] (1,2)--(3,2)--(3,3)--(1,3)--cycle;
	\fill[black!12] (3,3)--(4,3)--(4,4)--(3,4)--cycle;
	\draw[gray] (0,0) grid (8,5);
	\draw[very thick] (0,0)--(0,2)--(1,2)--(1,3)--(3,3)--(3,4)--(6,4)--(6,5)--(8,5);
	\draw[thick] (0,0)--(8,5);
\end{scope}
\end{tikzpicture}
\caption{The rational $8/5$-Dyck path $P=(0,0,1,3,6)$ with its area squares marked in gray. It has area vector $(0,1,2,1,0)$.}
\end{center}
\end{figure}
\\
Rational $\rat/n$-Dyck paths were used by Anderson \cite{anderson02core}, who provided a bijection between them and a certain set of integer partitions called \defn{$(\rat,n)$-cores}. The Anderson map $\A_{GMV}$, which will be defined later, may be seen as an extension of that bijection.
\subsection{Rational parking functions}
Equation (\ref{ratdyck}) suggests the following generalisation of rational Dyck paths. A \defn{rational $\rat/n$-parking function} is any tuple $(f_1,f_2,\ldots,f_n)$ of $n$ nonnegative integers such that 
\[\#\{i:f_i<l\}\geq\frac{nl}{\rat}\]
for all $l\in[\rat]$. Thus rational $\rat/n$-Dyck paths correspond to \defn{increasing} rational $\rat/n$-parking functions.
\begin{mdframed}[style=Example]
\textbf{Example.} The tuple $f=(6,0,1,0,3)$ is a rational $8/5$-parking function. Its increasing rearrangement is the rational $8/5$-Dyck path $(0,0,1,3,6)$.
\end{mdframed}
An important property of rational parking functions is the following folklore theorem. Let $\packrat$ be the set of $\rat/n$-parking functions.
\begin{theorem}\label{parkreps}
  $\packrat$ is a set of representatives for the cosets of the cyclic subgroup generated by $(1,1,\ldots,1)$ in the abelian group $\Z_{\rat}^n$.
  \begin{proof}
   We represent elements of $\Z_{\rat}^n$ as tuples $f=(f_1,f_2,\ldots,f_n)$ with $f_i\in\{0,1,\ldots,\rat-1\}$ for all $i\in[n]$.
   Define 
   \[\summed(f):=\sum_{i=1}^nf_i\in\N.\]
   We claim that every coset $H$ of the cyclic subgroup generated by $(1,1,\ldots,1)$ in $\Z_{\rat}^n$ has a unique representative with minimal $\summed$.
     To see this, we prove the stronger statement that all representatives of $H$ have different $\summed$.
     Suppose that $f\in H$, $l\in[\rat]$ and 
     \[\summed\left(f-l(1,1,\ldots,1)\right)=\summed(f).\]
     We calculate that 
     \begin{equation}\label{sum=sum}
      \summed\left(f-l(1,1,\ldots,1)\right)=\summed(f)-nl+\rat\cdot\#\{i\in[n]:f_i-l<0\},
     \end{equation}
     \noindent
     so $-nl+\rat\cdot\#\{i\in[n]:f_i-l<0\}=0$. Thus $\rat$ divides $nl$.
     Since $\rat$ is relatively prime to $n$, this implies that $\rat$ divides $l$. Therefore $f-l(1,1,\ldots,1)=f$.\\
     \\
     To finish the proof, we claim that $f\in H$ has minimal $\summed$ if and only if $f$ is a $\rat/n$-parking function.
     To see this, first suppose that $f\in H$ has minimal $\summed$. Then $\summed\left(f-l(1,1,\ldots,1)\right)\geq\summed(f)$ for all $l\in[\rat]$.
     From Equation (\ref{sum=sum}) we deduce that $-nl+\rat\cdot\#\{i\in[n]:f_i-l<0\}\geq0$ for all $l\in[\rat]$, or equivalently $\#\{i\in[n]:f_i<l\}\geq\frac{nl}{\rat}$ for all $l\in[\rat]$.
     So $f$ is a $\rat/n$-parking function. Reversing the argument shows that if $f$ is a $\rat/n$-parking function then it has minimal $\summed$ in the coset containing it.
  \end{proof}

\end{theorem}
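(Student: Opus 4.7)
My plan is to pick canonical representatives of $\Z_\rat^n$ with coordinates in $\{0,1,\ldots,\rat-1\}$, define the statistic $\summed(f):=\sum_{i=1}^n f_i$, and show (i) every coset of $\langle(1,1,\ldots,1)\rangle$ contains a unique representative minimizing $\summed$, and (ii) the minimizers are exactly the rational $\rat/n$-parking functions. This simultaneously gives existence and uniqueness of a parking function in each coset.

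The first step is to understand how $\summed$ changes when we translate by $(1,\ldots,1)$. If we subtract $l\cdot(1,\ldots,1)$ and then reduce mod $\rat$, the coordinates with $f_i<l$ wrap around, each contributing $+\rat$, while every coordinate loses $l$. This yields the identity
\[\summed\bigl(f-l(1,\ldots,1)\bigr)=\summed(f)-nl+\rat\cdot\#\{i\in[n]:f_i<l\}.\]
From this, two representatives of the same coset have equal $\summed$ only when $nl\equiv 0\pmod\rat$ for the connecting translation amount $l\in\{0,1,\ldots,\rat-1\}$, and since $\gcd(n,\rat)=1$ this forces $l=0$. Hence distinct representatives in a coset have distinct sums, so each coset has a unique minimizer of $\summed$.

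Next I would characterize the minimizer. A representative $f$ minimizes $\summed$ in its coset if and only if $\summed(f-l(1,\ldots,1))\geq\summed(f)$ for all $l\in[\rat]$. Substituting the identity above, this is equivalent to
\[\#\{i\in[n]:f_i<l\}\geq\frac{nl}{\rat}\qquad\text{for all }l\in[\rat],\]
which is exactly the definition of a $\rat/n$-parking function. Thus $\packrat$ is precisely the set of $\summed$-minimizing coset representatives, proving the theorem.

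The only delicate point is step (i): one must be careful that $f-l(1,\ldots,1)$ is reduced back into $\{0,\ldots,\rat-1\}^n$ before summing, which is what makes the wrap-around term $\rat\cdot\#\{i:f_i<l\}$ appear, and one must invoke $\gcd(n,\rat)=1$ at the right moment. Everything else is a direct translation between the minimality condition and the parking-function inequality.
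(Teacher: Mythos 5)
Your proposal is correct and follows essentially the same argument as the paper: the same canonical representatives in $\{0,\ldots,\rat-1\}^n$, the same statistic $\summed$, the same wrap-around identity, the same use of $\gcd(n,\rat)=1$ to get uniqueness of the minimizer, and the same equivalence between minimizing $\summed$ and the parking-function inequality.
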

\begin{corollary}[\protect{\cite[Corollary 4]{armstrong14rational}}]\label{parknum}
 $|\packrat|=\rat^{n-1}$.
\end{corollary}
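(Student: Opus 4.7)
The plan is to deduce the corollary directly from Theorem \ref{parkreps}. That theorem identifies $\packrat$ with a complete set of coset representatives for the cyclic subgroup $C$ generated by $(1,1,\ldots,1)$ inside $\Z_{\rat}^n$. Hence
\[|\packrat| = [\Z_{\rat}^n : C] = \frac{|\Z_{\rat}^n|}{|C|} = \frac{\rat^n}{|C|}.\]

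First I would compute $|C|$. The order of $(1,1,\ldots,1)$ in $\Z_{\rat}^n$ is the smallest positive integer $k$ such that $k \equiv 0 \pmod{\rat}$ in each coordinate, which is plainly $\rat$. So $|C| = \rat$.

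Substituting gives $|\packrat| = \rat^n/\rat = \rat^{n-1}$, as required. No real obstacle arises here; this is simply a counting consequence of the previous theorem, and the only small point to note explicitly is that $(1,1,\ldots,1)$ has order exactly $\rat$ (not a proper divisor) in $\Z_{\rat}^n$, which follows since $\rat$ is the additive order of $1$ in $\Z_{\rat}$.
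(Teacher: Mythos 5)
Your proof is correct and is exactly the intended deduction: the paper places this as a corollary immediately after Theorem \ref{parkreps} (citing \cite{armstrong14rational} for the statement), and the counting you give — $\rat^n$ elements in $\Z_\rat^n$ divided by the order $\rat$ of the cyclic subgroup generated by $(1,1,\ldots,1)$ — is the standard and essentially only way to extract the formula from that theorem.
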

\noindent
The symmetric group $S_n$ naturally acts on $\packrat$ by permuting coordinates. Every orbit of this action contains exactly one increasing $\rat/n$-parking function.
Thus the $S_n$-orbits on $\packrat$ are naturally indexed by $\rat/n$-Dyck paths. In particular the number of $S_n$-orbits on $\packrat$ is $\mathsf{Cat}_{\rat/n}$.
\subsection{Vertically labelled Dyck paths}\label{vert}
It is natural to introduce a combinatorial model for $\rat/n$-parking functions in terms of \defn{vertically labelled $\rat/n$-Dyck paths}.
In fact, this is how rational parking functions were originally defined in \cite{armstrong14rational}.\\
\\
An index $i\in[n]$ is called a \defn{rise} of a $\rat/n$-Dyck path $P$ if the $i$-th North step of $P$ is followed by another North step.
Equivalently, $i$ is a rise of $P$ if $P_i=P_{i+1}$.
A vertically labelled $\rat/n$-Dyck path is a pair $(P,\sigma)$ of a $\rat/n$-Dyck path $P$ and a permutation $\sigma\in S_n$ such that whenever $i$ is a rise of $P$ we have $\sigma(i)<\sigma(i+1)$.
We think of $\sigma$ as labeling the North steps of $P$ and say that the rise $i$ is \defn{labelled} $(\sigma(i),\sigma(i+1))$.\\
\\
The bijection from $\rat/n$-parking functions to vertically labelled $\rat/n$-Dyck paths works as follows:
for a $\rat/n$-parking function $f=(f_1,f_2,\ldots,f_n)$, let $(P_1,P_2,\ldots,P_n)$ be its increasing rearrangement and let $P$ be the corresponding $\rat/n$-Dyck path.
So $P$ encodes the values that $f$ takes, with multiplicities. In order to recover $f$, we also need to know their preimages under $f$.
Thus for all $l\in\{0,1,\ldots,\rat\}$ we label the North steps of $P$ with $x$-coordinate equal to $l$ by the preimages of $l$ under $f$.
If there are multiple North steps with the same $x$-coordinate we label them increasingly from bottom to top.
Let $\sigma\in S_n$ be the permutation that maps $i\in[n]$ to the label of the $i$-th North step of $P$.
Then $(P,\sigma)$ is the vertically labelled $\rat/n$-Dyck path corresponding to $f$.\\
\\
The inverse bijection is simple: if $(P,\sigma)$ is the vertically labelled $\rat/n$-Dyck path corresponding to the $\rat/n$-parking function $f$ then $f=\sigma\cdot(P_1,P_2,\ldots,P_n)$.
We will often use this bijection implicitly and do not distinguish between rational $\rat/n$-parking functions and their combinatorial interpretation as vertically labelled $\rat/n$-Dyck paths.\\
\begin{figure}[h]
\begin{center}
\begin{tikzpicture}[scale=.6]
\begin{scope}
	\draw[gray] (0,0) grid (8,5);
	\draw[very thick] (0,0)--(0,2)--(1,2)--(1,3)--(3,3)--(3,4)--(6,4)--(6,5)--(8,5);
	\draw[thick] (0,0)--(8,5);
	\draw[xshift=5mm,yshift=5mm]
		(-1,0) node[color=blue,]{\large{$2$}}
		(-1,1) node[color=blue,]{\large{$4$}}
		(0,2) node[color=blue,]{\large{$3$}}
		(2,3) node[color=blue,]{\large{$5$}}
		(5,4) node[color=blue,]{\large{$1$}};
\end{scope}
\end{tikzpicture}
\caption{The vertically labelled $8/5$-Dyck path $(P,\sigma)$ with $P=(0,0,1,3,6)$ and $\sigma=24351$.
It corresponds to the $8/5$-parking function $(6,0,1,0,3)$. Its only rise is $1$, and it is labelled $(2,4)$.}
\end{center}
\end{figure}
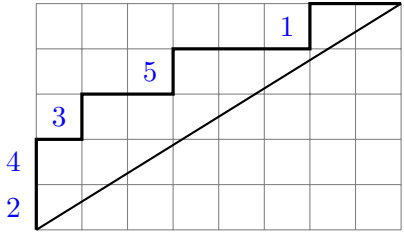
\\
In terms of vertically labelled Dyck paths, the natural $S_n$-action on $\pf_{\rat/n}$ is realized by defining for $\tau\in S_n$
$$\tau\cdot(P,\sigma)=(P,(\tau\sigma)'),$$
where $(P,(\tau\sigma)')$ comes from labelling the North steps of $P$ with $\tau\sigma$ and then sorting the labels in each column increasingly from bottom to top.\\
\section{The affine symmetric group}
Let $\Phi$ be a root system of type $A_{n-1}$. We choose
\begin{align*}
\Phi&=\{e_i-e_j:i,j\in[n],i\neq j\},\\
\Phi^+&=\{e_i-e_j:i,j\in[n],i<j\},\\
\Delta&=\{e_i-e_{i+1}:i\in[n-1]\},\\
V&=\{(x_1,x_2,\ldots,x_n)\in\R^n:\sum_{i=1}^nx_i=0\},\text{ and}\\
\Q&=\{(x_1,x_2,\ldots,x_n)\in\Z^n:\sum_{i=1}^nx_i=0\}.
\end{align*}
The Weyl group $W$ is the symmetric group $S_n$ acting on $V$ by permuting coordinates, the rank of $\Phi$ is $r=n-1$ and the Coxeter number is $h=n$.\\
\\
The affine Weyl group $\wa$ also has a combinatorial model as $\widetilde{S}_n$, the set of \defn{affine permutations} of period $n$. These are the bijections $\tilde{\sigma}:\Z\rightarrow\Z$ with 
$$\tilde{\sigma}(i+n)=\tilde{\sigma}(i)+n\text{ for all }i\in\Z\text{ and}$$
$$\sum_{i=1}^n\tilde{\sigma}(i)=\binom{n+1}{2}.$$
The affine symmetric group is generated by the \defn{affine simple transpositions} $\widetilde{s}_0,\widetilde{s}_1,\ldots,\widetilde{s}_{n-1}$ that act on $\Z$ by
\begin{align*}
 \widetilde{s}_j(i)&=i+1\text{ for }i\equiv j\text{ }(\mathrm{mod} \text{ }n),\\
 \widetilde{s}_j(i)&=i-1\text{ for }i\equiv j+1\text{ }(\mathrm{mod} \text{ }n),\text{ and}\\
 \widetilde{s}_j(i)&=i\text{ otherwise.}
\end{align*}
To identify the affine Weyl group $\wa$ with $\widetilde{S}_n$ we index its generating set as $\widetilde{S}=\{s_0,s_1,\ldots,s_{n-1}\}$, where $s_i=s_{e_i-e_{i+1}}$ for $i\in[n-1]$ and $s_0=s_{e_1-e_n}^1$.
Here $e_1-e_n=\hr$ is the highest root of $\Phi$. The generators $s_0,s_1,\ldots,s_{n-1}$ of $\wa$ satisfy the same relations as the generators $\widetilde{s}_0,\widetilde{s}_1,\ldots,\widetilde{s}_{n-1}$ of $\widetilde{S}_n$,
so sending $s_i\mapsto\widetilde{s}_i$ for $i=0,1\ldots,n-1$ defines an isomorphism from $\wa$ to $\widetilde{S}_n$.
We use this identification extensively and do not distinguish between elements of the affine Weyl group and the corresponding affine permutations.\\
\\
Since $\waf\in\widetilde{S}_n$ is uniquely defined by its values on $[n]$, we sometimes write it in \defn{window notation} as $\waf=[\waf(1),\waf(2),\ldots,\waf(n)]$.\\
\\
For $\waf\in\widetilde{S}_n$, write $\waf(i)=w(i)+n\mu_i$ with $w(i)\in[n]$ for all $i\in[n]$. 
Then $w\in S_n$, $\mu:=(\mu_1,\mu_2,\ldots,\mu_n)\in\Q$ and $\waf=wt_{\mu}\in\wa$.\\
\begin{mdframed}[style=Example]
\textbf{Example.} Consider the affine permutation $\waf\in\widetilde{S}_4$ with window $[-3,10,4,-1]$.
We can write $(-3,10,4,-1)=(1,2,4,3)+4(-1,2,0,-1)$, so we have $w=1243$ as a permutation in $S_4$ in one-line notation and $\mu=(-1,2,0,-1)\in\Q$. 
So $\waf=wt_{\mu}=s_3t_{(-1,2,0,-1)}\in\wa$.
\end{mdframed}
In order to avoid notational confusion, we use a ``$\cdot$'' for the action of $\wa$ on $V$.
So in particular $\waf(0)$ is the affine permutation $\waf$ evaluated at $0\in\Z$, whereas $\waf\cdot0$ is the affine Weyl group element $\waf$ applied to $0\in V$.
\section{Abaci}\label{abacus}
For any affine permutation $\waf$, we consider the set 
$$\Delta_{\waf}:=\{l\in\Z:\waf(l)>0\}=\waf^{-1}(\Z_{>0}).$$
We define its \defn{abacus diagram} $\mathsf{A}(\Delta_{\waf})$ as follows: draw $n$ \defn{runners}, labelled $1,2,\ldots,n$ from left to right, with runner $i$ containing all the integers congruent to $i$ modulo $n$ arranged in increasing order from top to bottom.
We say that the $k$-th \defn{level} of the abacus contains the integers $(k-1)n+i$ for $i\in[n]$ and arrange the runners in such a way that the integers of the same level are on the same horizontal line.\\
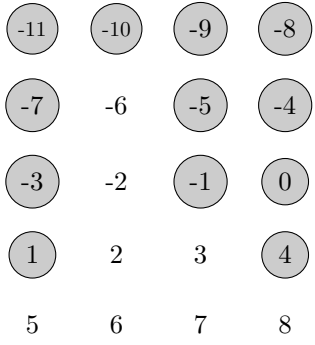
\begin{figure}[h]
\begin{center}
\begin{tikzpicture}[scale=1]
    \matrix[nodes={draw, circle, fill=gray!40},row sep=0.3cm,column sep=0.4cm] {
    \node[scale=0.8]{-11};&\node[scale=0.8]{-10};&\node{-9};&\node{-8};&\\
    \node{-7};&\node[draw=none, fill=none]{-6};&\node{-5};&\node{-4};&\\
    \node{-3};&\node[draw=none, fill=none]{-2};&\node{-1};&\node{0};&\\
    \node{1};&\node[draw=none, fill=none]{2};&\node[draw=none, fill=none]{3};&\node{4};&\\
    \node[draw=none, fill=none]{5};&\node[draw=none, fill=none]{6};&\node[draw=none, fill=none]{7};&\node[draw=none, fill=none]{8};&\\
    };
\end{tikzpicture}
\end{center}
\caption{The balanced $4$-flush abacus $\mathsf{A}(\Delta_{\waf})$ for $\waf=[-3,10,4,-1]$. Note that the values in the window of $\waf^{-1}=[5,-6,8,3]$ are the smallest (level) gaps of $\mathsf{A}(\Delta_{\waf})$ in each runner. The levels of the runners of $\mathsf{A}(\Delta_{\waf})$ are $1,-2,0,1$. Thus $\waf^{-1}\cdot0=(1,-2,0,1)$. Note that if we write $\waf=wt_{\mu}$ then $\waf^{-1}\cdot0=-\mu.$}
\end{figure}
\\
We circle the elements of $\Z\backslash\Delta_{\waf}$ and call them \defn{beads}, whereas we call the elements of $\Delta_{\waf}$ \defn{gaps}. Notice that the fact that $\waf(l+n)=\waf(l)+n>\waf(l)$ for all $l\in\Z$ implies that whenever $l\in\Delta_{\waf}$ then also $l+n\in\Delta_{\waf}$. 
We say that $\Delta_{\waf}$ is \defn{$n$-invariant}. Thus the abacus $\mathsf{A}(\Delta_{\waf})$ is \defn{$n$-flush}, that is whenever $l$ is a gap then all the $l+kn$ for $k\in\Z_{>0}$ below it are also gaps.
Or equivalently whenever $l$ is a bead then so are all the $l-kn$ for $k\in\Z_{>0}$ above it. \\
\\
For an $n$-flush abacus $\mathsf{A}$ define $\mathsf{level}_i(\mathsf{A})$ to be the largest level of a bead on runner $i$ in $\mathsf{A}$ for $i\in[n]$. Define the integer tuple
$$\lvl(\mathsf{A})=(\mathsf{level}_1(\mathsf{A}),\mathsf{level}_2(\mathsf{A}),\ldots,\mathsf{level}_n(\mathsf{A})).$$
The following theorem is well-known.
\begin{theorem}\label{abacuscoroot}
 For $\waf\in\widetilde{S}_n$, we have $\lvl(\mathsf{A}(\Delta_{\waf}))=\waf^{-1}\cdot0$.
 \begin{proof}
  Note that $\lvl(\mathsf{A}(\Delta_{e}))=0$ and
  \begin{align*}
   \lvl(\mathsf{A}(\Delta_{\waf s_j}))&=\lvl(\mathsf{A}((\waf s_j)^{-1}(\Z_{>0})))\\
   &=\lvl(\mathsf{A}(s_j\waf^{-1}(\Z_{>0})))\\
   &=\lvl(\mathsf{A}(s_j(\Delta_{\waf})))\\
   &=s_j\cdot\lvl(\mathsf{A}(\Delta_{\waf}))
  \end{align*}
  for $\waf\in\widetilde{S}_n$ and $j=0,1,\ldots,n-1$. Thus the result follows by induction on the length $l(\waf)$ of $\waf$.
 \end{proof}

\end{theorem}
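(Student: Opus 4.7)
The plan is to compute both sides of the equality directly in coordinates. Writing $\waf = wt_\mu$ for the unique $w \in W = S_n$ and $\mu = (\mu_1,\ldots,\mu_n) \in \Q$, a quick calculation gives $\waf^{-1}(x) = w^{-1}(x) - \mu$, so $\waf^{-1}\cdot 0 = -\mu$. It therefore suffices to show $\mathsf{level}_i(\mathsf{A}(\Delta_\waf)) = -\mu_i$ for each $i \in [n]$.

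To do this, I would fix $i$ and identify the largest bead on runner $i$. The beads on that runner are the integers of the form $l = kn + i$ with $k \in \Z$ for which $\waf(l) \le 0$. Using the periodicity relation $\waf(kn + i) = \waf(i) + kn$ together with the decomposition $\waf(i) = w(i) + n\mu_i$ with $w(i) \in [n]$, the bead condition becomes $n(\mu_i + k) \le -w(i)$. Since $-w(i)$ lies in $\{-n, -n+1, \ldots, -1\}$ and $\mu_i + k \in \Z$, this is equivalent to $k \le -\mu_i - 1$. The largest such $k$ is $-\mu_i - 1$, placing the largest bead at position $(-\mu_i - 1)n + i$, which by the level convention ``position $(k-1)n + i$ sits at level $k$'' lies at level $-\mu_i$ on runner $i$. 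Hence $\mathsf{level}_i(\mathsf{A}(\Delta_\waf)) = -\mu_i$, and the theorem follows.

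The main potential pitfall is keeping the normalisation conventions straight: one must use $w(i) \in [n] = \{1,\ldots,n\}$ rather than $\{0,\ldots,n-1\}$, which is precisely what forces the inequality $\mu_i + k \le -1$ strictly (and so the maximum $k$ is $-\mu_i - 1$, not $-\mu_i$). As an alternative one could instead induct on the Coxeter length $l(\waf)$: verify the base case $\waf = e$ (where $\Delta_e = \Z_{>0}$, each runner's largest bead sits at level $0$, and $e^{-1}\cdot 0 = 0$), and then show that right-multiplication by a simple affine reflection $\widetilde{s}_j$ induces the action of $s_j$ on both $\lvl(\mathsf{A}(\Delta_\waf))$ and on $\waf^{-1}\cdot 0$, using $\Delta_{\waf \widetilde{s}_j} = \widetilde{s}_j(\Delta_\waf)$. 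The inductive route requires separating the cases $j \neq 0$ (where $\widetilde{s}_j$ swaps runners $j$ and $j+1$ while preserving levels) and $j = 0$ (where $\widetilde{s}_0$ exchanges runners $1$ and $n$ with a level shift matching the affine reflection $s_{\hr}^1$ on $V$), which is why I prefer the direct computation.
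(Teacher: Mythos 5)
Your primary proof is correct and takes a genuinely different route from the paper. The paper's argument is an induction on Coxeter length: it reduces the claim to the identity $\lvl(\mathsf{A}(s_j(\Delta_{\waf})))=s_j\cdot\lvl(\mathsf{A}(\Delta_{\waf}))$ for each simple affine generator $s_j$, combined with the base case $\lvl(\mathsf{A}(\Delta_{e}))=0$. That step is asserted in one line, silently invoking the compatibility of the $\widetilde{S}_n$-action on $n$-invariant subsets of $\Z$ with the $\wa$-action on $\Q$; unpacking it would require precisely the two-case analysis ($j\neq0$ versus $j=0$) that you flag in your alternative sketch. Your main proof instead computes $\mathsf{level}_i(\mathsf{A}(\Delta_\waf))$ directly: decomposing $\waf(i)=w(i)+n\mu_i$ with $w(i)\in[n]$, you show that $kn+i$ is a bead iff $k\le-\mu_i-1$, hence the largest bead on runner $i$ sits at level $-\mu_i$, so $\lvl(\mathsf{A}(\Delta_\waf))=-\mu=\waf^{-1}\cdot0$. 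The arithmetic checks out, and the normalisation point you highlight (using $w(i)\in\{1,\ldots,n\}$ rather than $\{0,\ldots,n-1\}$ so that the strict bound $k\le-\mu_i-1$ comes out right) is exactly the spot where this kind of computation goes wrong, so it is good that you made it explicit. The paper's inductive proof is shorter and slots naturally into a narrative where the $\wa$-action is the recurring theme; your direct computation is more self-contained and makes the off-by-one conventions visible, at the cost of being tied to the explicit coordinate model of type $A_{n-1}$.
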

\noindent
In particular $\lvl(\mathsf{A}(\Delta_{\waf}))\in\Q$, so the sum of the levels of $\mathsf{A}(\Delta_{\waf})$ is zero.
We call such an abacus \defn{balanced}.\\
\\
Let $M_{\waf}$ be the minimal element of $\Delta_{\waf}$ (that is, the smallest gap of $\mathsf{A}(\Delta_{\waf})$) and define $\widetilde{\Delta}_{\waf}=\Delta_{\waf}-M_{\waf}$. This is also an $n$-invariant set, so we form its $n$-flush abacus $\mathsf{A}(\widetilde{\Delta}_{\waf})$.
This is a \defn{normalized} abacus, that is its smallest gap is $0$.\\
\begin{figure}[h]
\begin{center}
\begin{tikzpicture}[scale=1]
    \matrix[nodes={draw,circle,fill=gray!40},row sep=0.3cm,column sep=0.4cm] {
    \node{-7};&\node{-6};&\node{-5};&\node{-4};&\\
    \node{-3};&\node{-2};&\node{-1};&\node[draw=none, fill=none]{0};&\\
    \node{1};&\node{2};&\node{3};&\node[draw=none, fill=none]{4};&\\
    \node{5};&\node{6};&\node{7};&\node[draw=none, fill=none]{8};&\\
    \node[draw=none, fill=none]{9};&\node[scale=0.8]{10};&\node[draw=none, fill=none]{11};&\node[draw=none, fill=none]{12};&\\
    };
\end{tikzpicture}
\end{center}
\caption{The normalized $4$-flush abacus $\mathsf{A}(\widetilde{\Delta}_{\waf})$ for $\waf=[-3,10,4,-1]$. Here $M_{\waf}=-6$.}
\end{figure}
\\
\textbf{Remark}. It is easy to see that if $\Delta$ is an $n$-invariant set with $\lvl(\mathsf{A}(\Delta))=(x_1,x_2,\ldots,x_n)$, then $\lvl(\mathsf{A}(\Delta+1))=(x_n+1,x_1,x_2,\ldots,x_{n-1})$.
Thus we define the bijection 
\begin{align*}
 g:\Z^n&\rightarrow\Z^n\\
 (x_1,x_2,\ldots,x_n)&\mapsto (x_n+1,x_1,x_2,\ldots,x_{n-1})
\end{align*}
and get that
\begin{equation}\label{glvl}
 \lvl(\mathsf{A}(\widetilde{\Delta}_{\waf}))=g^{-M_{\waf}}\cdot\lvl(\mathsf{A}(\Delta_{\waf})).
\end{equation}
In particular $\sum_{i=1}^n\mathsf{level}_i(\mathsf{A}(\widetilde{\Delta}_{\waf}))=-M_{\waf}$.
\section{$\rat$-stable affine permutations}
For a positive integer $\rat$ relatively prime to $n$, we define the set of \defn{$\rat$-stable affine permutations} $\widetilde{S}_n^{\rat}$ as \cite[Definition 2.13]{gorsky14affine}
\[\widetilde{S}_n^{\rat}:=\{\waf\in \widetilde{S}_n:\waf(i+\rat)>\waf(i)\text{ for all }i\in\Z\}.\]
If $\waf$ is $\rat$-stable, then $\Delta_{\waf}$ is $\rat$-invariant in addition to being $n$-invariant.
So the $n$-flush abacus $\mathsf{A}(\Delta_{\waf})$ is also $\rat$-flush, that is whenever $l$ is a gap then so is $l+k\rat$ for all $k\in\Z_{>0}$.
\begin{mdframed}[style=Example]
\textbf{Example.} The affine permutation $\waf=[-3,10,4,-1]$ is $9$-stable. Thus its balanced abacus $\mathsf{A}(\Delta_{\waf})$ is $9$-flush in addition to being $4$-flush.
\end{mdframed}
\section{The combinatorial Anderson map}\label{andgmv}
We are now ready to describe the \defn{Anderson map} $\A_{GMV}$ defined by Gorsky, Mazin and Vazirani \cite[Section 3.1]{gorsky14affine}.
It is a bijection from the set of $\rat$-stable affine permutations $\widetilde{S}_n^{\rat}$ to the set of $\rat/n$-parking functions.\\
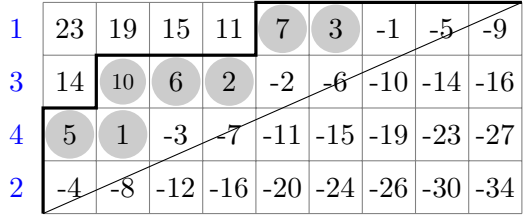
\begin{figure}[h]
\begin{center}
\begin{tikzpicture}[scale=.7]
\begin{scope}
	\draw[gray] (0,0) grid (9,4);
	\draw[very thick] (0,0)--(0,2)--(1,2)--(1,3)--(4,3)--(4,4)--(9,4);
	\draw (0,0)--(9,4);
	\draw[xshift=5mm,yshift=5mm]
		(0,0) node{\large{-4}}
		(1,0) node{\large{-8}}
		(2,0) node{\large{-12}}
		(3,0) node{\large{-16}}
		(4,0) node{\large{-20}}
		(5,0) node{\large{-24}}
		(6,0) node{\large{-26}}
		(7,0) node{\large{-30}}
		(8,0) node{\large{-34}}
		(0,1) node[fill=gray!40,circle]{\large{5}}
		(1,1) node[fill=gray!40,circle]{\large{1}}
		(2,1) node{\large{-3}}
		(3,1) node{\large{-7}}
		(4,1) node{\large{-11}}
		(5,1) node{\large{-15}}
		(6,1) node{\large{-19}}
		(7,1) node{\large{-23}}
		(8,1) node{\large{-27}}
		(0,2) node{\large{14}}
		(1,2) node[fill=gray!40,circle,scale=0.78]{\large{10}}
		(2,2) node[fill=gray!40,circle]{\large{6}}
		(3,2) node[fill=gray!40,circle]{\large{2}}
		(4,2) node{\large{-2}}
		(5,2) node{\large{-6}}
		(6,2) node{\large{-10}}
		(7,2) node{\large{-14}}
		(8,2) node{\large{-16}}
		(0,3) node{\large{23}}
		(1,3) node{\large{19}}
		(2,3) node{\large{15}}
		(3,3) node{\large{11}}
		(4,3) node[fill=gray!40,circle]{\large{7}}
		(5,3) node[fill=gray!40,circle]{\large{3}}
		(6,3) node{\large{-1}}
		(7,3) node{\large{-5}}
		(8,3) node{\large{-9}};
	\draw[xshift=5mm,yshift=5mm]
		(-1,0) node[color=blue,]{\large{2}}
		(-1,1) node[color=blue,]{\large{4}}
		(-1,2) node[color=blue,]{\large{3}}
		(-1,3) node[color=blue,]{\large{1}};
\end{scope}
\end{tikzpicture}
\caption{The vertically labelled $9/4$-Dyck path $\A_{GMV}(\waf)$ for $\waf=[-3,10,4,-1]$. It has area vector $(0,2,3,2)$ and labelling $\sigma=2431$. It corresponds to the $9/4$-parking function $(4,0,1,0)$.
The positive beads of the normalized abacus $\mathsf{A}(\widetilde{\Delta}_{\waf})$ are shaded in gray.}
\end{center}
\end{figure}
\\
We use English notation, so for us a \defn{Young diagram} is a finite set of square boxes that is left-justified and top-justified.
  Take $\waf\in\widetilde{S}^{\rat}_n$.
  As in Section \ref{abacus} we consider the set $$\Delta_{\waf}:=\{i\in\Z:\waf(i)>0\}$$ and let $M_{\waf}$ be its minimal element.
  Let $\widetilde{\Delta}_{\waf}:=\Delta_{\waf}-M_{\waf}$. In contrast to \cite{gorsky14affine}, we shall use $\widetilde{\Delta}_{\waf}$ in place of $\Delta_{\waf}$ and therefore also have a different labelling of $\Z^2$.\\
  \\
  View the integer lattice $\Z^2$ as the set of square boxes. Define the rectangle $$R_{\rat,n}:=\{(x,y)\in\Z^2:0\leq x<\rat,0\leq y<n\}$$ and label $\Z^2$ by the linear function
  $$l(x,y):=-n-nx+\rat y.$$
  Define the Young diagram 
  $$D_{\waf}:=\{(x,y)\in R_{\rat,n}:l(x,y)\in\widetilde{\Delta}_{\waf}\}$$
  and let $P_{\waf}$ be the path that defines its lower boundary. It is a $\rat/n$-Dyck path.
  Label its $i$-th North step by $\sigma(i):=\waf(l_i+M_{\waf})$, where $l_i$ is the label of the rightmost box of $D_{\waf}$ in the $i$-th row from the bottom (or the label of $(-1,i-1)$ if its $i$-th row is empty).
  Then $\sigma\in S_n$ and $(P_{\waf},\sigma)$ is a vertically labelled $\rat/n$-Dyck path.
  We define $\A_{GMV}:=(P_{\waf},\sigma)$.

\begin{theorem}[\protect{\cite[Theorem 3.4]{gorsky14affine}}]
 The Anderson map $\A_{GMV}$ is a bijection from $\widetilde{S}_n^{\rat}$ to the set of vertically labelled $\rat/n$-Dyck paths.
\end{theorem}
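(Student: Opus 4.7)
The plan is to show $\A_{GMV}$ is well-defined---i.e., that the image of any $\waf\in\widetilde{S}_n^{\rat}$ really is a vertically labelled $\rat/n$-Dyck path---and then to construct an explicit inverse.

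For well-definedness, take $\waf \in \widetilde{S}_n^{\rat}$. Since $l(x-1, y) = l(x, y) + n$ and $l(x, y+1) = l(x, y) + \rat$, the $n$-invariance and $\rat$-invariance of $\widetilde{\Delta}_{\waf}$ (the latter being a consequence of $\rat$-stability) translate directly into the left-justified and top-justified properties of $D_{\waf}$ inside $R_{\rat, n}$. Since $\widetilde{\Delta}_{\waf} \subseteq \Z_{\geq 0}$, the rightmost box of row $i-1$ in $D_{\waf}$ has label $-nP_i + \rat(i-1) \geq 0$, giving the Dyck condition $P_i \leq \rat(i-1)/n$. Writing $l_i = -nP_i + \rat(i-1)$ uniformly across empty and nonempty rows, coprimality of $n$ and $\rat$ makes the residues of $l_i + M_\waf$ modulo $n$ distinct as $i$ ranges over $[n]$, so the values $\sigma(i) = \waf(l_i + M_\waf)$ are distinct modulo $n$. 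Membership $l_i + M_\waf \in \Delta_\waf$ forces $\sigma(i) \geq 1$, while $l_i + M_\waf - n \notin \Delta_\waf$ (the corresponding box being either just to the right of $D_\waf$ or, for an empty row, the missing leftmost box $(0, i-1)$) forces $\sigma(i) \leq n$, so $\sigma \in S_n$. At a rise ($P_i = P_{i+1}$) one has $l_{i+1} = l_i + \rat$, and $\rat$-stability of $\waf$ directly gives $\sigma(i+1) > \sigma(i)$.

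For the inverse, given a vertically labelled $\rat/n$-Dyck path $(P, \sigma)$, let $D$ be the Young diagram above $P$ and set $\widetilde{\Delta} := \langle n, \rat \rangle \cup \{l(x, y) : (x, y) \in D\} \subseteq \Z_{\geq 0}$, where $\langle n, \rat \rangle$ denotes the numerical semigroup generated by $n$ and $\rat$. Using the left- and top-justification of $D$, one checks that $\widetilde{\Delta}$ is $n$- and $\rat$-invariant with minimum $0$. Equation (\ref{glvl}) then identifies $M := -\sum_{i=1}^n \mathsf{level}_i(\mathsf{A}(\widetilde{\Delta}))$ as the unique shift making $\Delta := \widetilde{\Delta} + M$ a balanced $n$-flush set, so that $\Delta = \Delta_\waf$ for a unique $\waf \in \widetilde{S}_n$; its window is pinned down by $\waf(l_i + M) = \sigma(i)$ for $i \in [n]$ together with $n$-periodicity.

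The main step that needs care is confirming the reconstructed $\waf$ is genuinely $\rat$-stable, not merely that $\Delta_\waf$ be $\rat$-invariant (which is strictly weaker, as small examples in $\widetilde{S}_2$ show). By $n$-periodicity the difference $\waf(j + \rat) - \waf(j)$ depends only on $j \pmod n$, and $\{l_i + M : i \in [n]\}$ is a complete residue system modulo $n$. For $i \in [n-1]$, the identity $l_{i+1} - l_i = -n(P_{i+1} - P_i) + \rat$ yields
\[\waf(l_i + M + \rat) - \waf(l_i + M) = n(P_{i+1} - P_i) + \sigma(i+1) - \sigma(i),\]
which is positive at rises by the rise condition $\sigma(i+1) > \sigma(i)$ and at non-rises because $n(P_{i+1} - P_i) \geq n > n - 1 \geq \sigma(i) - \sigma(i+1)$. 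For $i = n$, using $l_n + \rat \equiv l_1 \pmod n$ together with $P_1 = 0$ and the Dyck bound $P_n \leq \rat(n-1)/n$, one obtains $\waf(l_n + M + \rat) - \waf(l_n + M) = n(\rat - P_n) + \sigma(1) - \sigma(n) > 0$ by the same comparison. Mutual inverseness of $\A_{GMV}$ and this construction is then routine, using that $\langle n, \rat \rangle$ and the positive labels of $R_{\rat, n}$ together partition $\Z_{\geq 0}$, so that $\widetilde{\Delta}_\waf$ is recovered unambiguously from $D_\waf$.
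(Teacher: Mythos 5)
The paper does not give its own proof of this theorem: it is quoted from Gorsky, Mazin and Vazirani \cite[Theorem~3.4]{gorsky14affine}, so there is no in-paper argument to compare against. I therefore assess your proposal on its own terms.

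Your argument is correct and essentially complete. The well-definedness part is clean: the observation that $l(x-1,y)=l(x,y)+n$ and $l(x,y+1)=l(x,y)+\rat$ translates $n$- and $\rat$-invariance of $\widetilde{\Delta}_{\waf}$ into the Young-diagram shape of $D_{\waf}$; the Dyck bound, the fact that $\sigma\in S_n$ (distinctness mod $n$ by coprimality, $1\leq\sigma(i)\leq n$ from $l_i\in\widetilde{\Delta}_{\waf}$ and $l_i-n\notin\widetilde{\Delta}_{\waf}$), and the rise condition via $\rat$-stability are all verified correctly. The inverse construction via $\widetilde{\Delta}=\langle n,\rat\rangle\cup\{l(x,y):(x,y)\in D\}$ and the balancing shift $M$ is sound, and you rightly flag that the real content is $\rat$-stability of the reconstructed $\waf$, not mere $\rat$-invariance of $\Delta$; your computation of $\waf(l_i+M+\rat)-\waf(l_i+M)$ for $i\in[n-1]$ and for $i=n$ (using $P_1=0$) handles this correctly, and checking the difference on the complete residue system $\{l_i+M\}$ suffices by $n$-periodicity.

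Two small points deserve tightening. First, the phrase ``$\Delta=\Delta_{\waf}$ for a unique $\waf\in\widetilde{S}_n$'' is not literally true: the balanced $n$-flush set $\Delta$ determines $\waf$ only up to left multiplication by $S_n$ (any $\waf$ with $\waf^{-1}(\Z_{>0})=\Delta$ works), and it is the additional window condition $\waf(l_i+M)=\sigma(i)$ that pins $\waf$ down; what you mean is clear but the sentence as written is wrong. Second, you implicitly assume that the map $\Z\to\Z$ defined by $\waf(l_i+M)=\sigma(i)$ and $n$-periodicity actually lies in $\widetilde{S}_n$, i.e.\ satisfies $\sum_{j=1}^n\waf(j)=\binom{n+1}{2}$. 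This does hold, but it needs a line: writing $\waf(j)=a_j+nb_j$ with $a_j\in[n]$ one checks $\sum_{j=1}^n\waf(j)=\binom{n+1}{2}+n\sum b_j$ and $\mathsf{level}_j(\mathsf{A}(\Delta_{\waf}))=-b_j$, so the normalisation is exactly equivalent to $\Delta$ being balanced, which is precisely what your choice of shift $M$ arranges. With these two clarifications the proposal is a complete and self-contained proof of the theorem.
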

\section{The uniform Anderson map}\label{and}
In this section, we will generalise the Anderson map $\A_{GMV}$ to a \defn{uniform} Anderson map $\A$ that is defined for all irreducible crystallographic root systems $\Phi$.
It is a bijection from the set $\wa^{\rat}$ of \defn{$\rat$-stable} affine Weyl group elements to the \defn{finite torus} $\Q/\rat\Q$.
We will proceed in several steps, all of which have already appeared in the literature in some form. This section can be summarised in the following commutative diagram:
\begin{center}
 \begin{tikzpicture}
  \node (a) {$\wa^{\rat}$};
  \node[right=1.5cm of a] (b) {$\sommers$};
  \node[right=4cm of a] (c) {$\rat\ac$};
  \node[right=7cm of a] (d) {$\Q/\rat\Q$};
  \draw[->]
      (a) edge node (1) [above]{} (b)
      (b) edge node (2) [above]{} (c)
      (c) edge node (3) [above]{} (d)
      (a) edge[bend left] node (and) [above]{$\A$} (d);
\end{tikzpicture}
\end{center}
\subsection{$\rat$-stable affine Weyl group elements}\label{stable}
Let $\Phi$ be any irreducible crystallographic root system and let $\wa$ be its affine Weyl group.
We say that $\waf\in\wa$ is \defn{$\rat$-stable} if it has no inversions of height $\rat$.
That is, $\waf$ is $\rat$-stable if $\waf(\ar_{\rat})\subseteq\ar^+$.
We denote the set of $\rat$-stable affine Weyl group elements by $\wa^{\rat}$.\\
\\
Define 
\[^{\rat}\wa:=\{\waf^{-1}:\waf\in\wa^{\rat}\}\]
as the set of inverses of $\rat$-stable affine Weyl group elements. We call these elements \defn{$\rat$-restricted}.
Recall from Lemma \ref{arsep} that an affine root $\alpha+k\delta\in\ar^+$ is an inversion of $\waf\in\wa$ if and only if the corresponding hyperplane $H_{\alpha}^{-k}$ separates $\waf^{-1}\ac$ from $\ac$.
Thus $\waf\in{^{\rat}\wa}$ if and only if no hyperplane corresponding to an affine root of height $\rat$ separates $\waf\ac$ from $\ac$.\\
\\
Write $\rat=ah+b$, where $a$ and $b$ are nonnegative integers and $0<b<h$. Then we have
\begin{align*}
 \ar_{\rat}&=\{\alpha+a\delta:\alpha\in\Phi_b\}\cup\{\alpha+(a+1)\delta:\alpha\in\Phi_{b-h}\}\\
 &=\{\alpha+a\delta:\alpha\in\Phi_b\}\cup\{-\alpha+(a+1)\delta:\alpha\in\Phi_{h-b}\}
\end{align*}
Thus the hyperplanes corresponding to affine roots of height $\rat$ are those of the form $H_{\alpha}^{-a}$ with $\alpha\in\Phi_b$ and those of the form $H_{-\alpha}^{-(a+1)}=H_{\alpha}^{a+1}$ for $\alpha\in\Phi_{h-b}$.
We define the \defn{Sommers region} as the region in $V$ bounded by these hyperplanes:
\[\sommers:=\{x\in V:\langle x,\alpha\rangle>-a\text{ for all }\alpha\in\Phi_b\text{ and }\langle x,\alpha\rangle<a+1\text{ for all }\alpha\in\Phi_{h-b}\}.\]
We will soon see that this region is in fact a simplex. For now we make the following observation.
\begin{figure}[h]
\begin{center}
 \resizebox*{7cm}{!}{\includegraphics{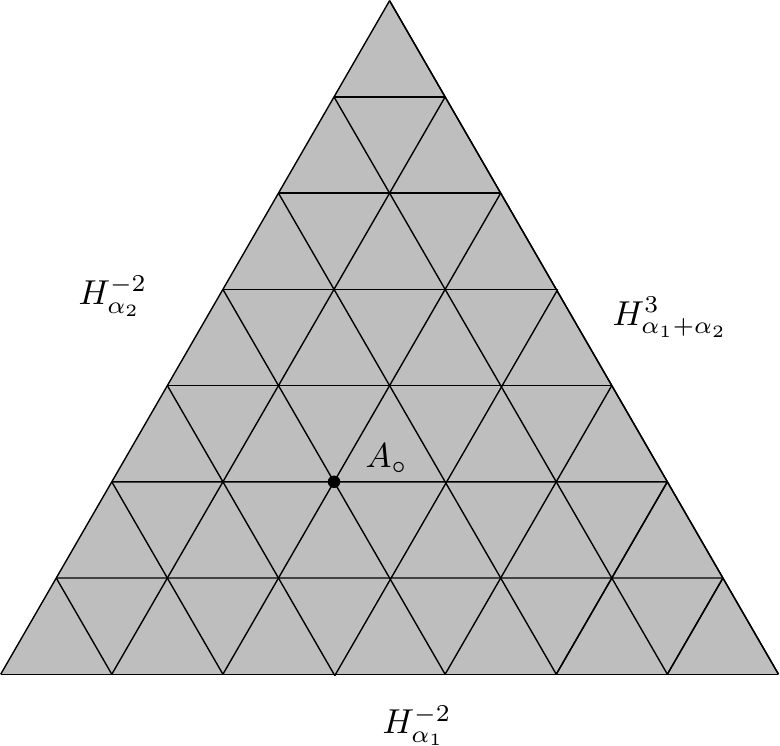}}\\
\end{center}
\caption{The $49$ alcoves in $\mathcal{S}_{\Phi}^7$ for $\Phi$ of type $A_2$.}
\end{figure}
\begin{lemma}\label{sommers}
 We have $\waf\in{^{\rat}\wa}$ if and only if $\waf\ac\subseteq\sommers$.
 \begin{proof}
  We have $\waf\in{^p\wa}$ if and only if no hyperplane corresponding to an affine root of height $\rat$ separates $\waf\ac$ from $\ac$.
  But those hyperplanes are exactly the hyperplanes bounding the Sommers region $\sommers$, and $\ac$ is inside $\sommers$.
 \end{proof}
\end{lemma}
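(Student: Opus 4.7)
The plan is to unwind the definitions. By definition $\waf \in {}^{\rat}\wa$ means $\waf^{-1} \in \wa^{\rat}$, i.e.\ $\waf^{-1}$ has no inversion of height $\rat$. By Lemma \ref{arsep}, an affine root $\alpha+k\delta\in\ar^+$ is an inversion of $\waf^{-1}$ precisely when the hyperplane $H_{\alpha}^{-k}$ separates $(\waf^{-1})^{-1}\ac = \waf\ac$ from $\ac$. So the first step is simply to record the reformulation (already noted in the paragraph preceding the lemma): $\waf\in{}^{\rat}\wa$ iff no hyperplane of the form $H_{\alpha}^{-k}$ with $\alpha+k\delta\in\ar^+_{\rat}$ separates $\waf\ac$ from $\ac$.

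Next I would match these hyperplanes with the walls of $\sommers$. Writing $\rat=ah+b$ with $0<b<h$ as in the setup, the enumeration $\ar_{\rat}=\{\alpha+a\delta:\alpha\in\Phi_b\}\cup\{-\alpha+(a+1)\delta:\alpha\in\Phi_{h-b}\}$ shows that the corresponding hyperplanes are exactly $H_{\alpha}^{-a}$ for $\alpha\in\Phi_b$ and $H_{\alpha}^{a+1}$ for $\alpha\in\Phi_{h-b}$, which are by construction the bounding hyperplanes of $\sommers$. I also need the observation $\ac\subseteq\sommers$: for $x\in\ac$ and $\alpha\in\Phi_b\subseteq\Phi^+$ one has $\langle x,\alpha\rangle>0\geq -a$; while for $\alpha\in\Phi_{h-b}$ the root order gives $\alpha\leq\hr$, so $\hr-\alpha$ is a sum of positive roots and thus $\langle x,\alpha\rangle\leq\langle x,\hr\rangle<1\leq a+1$.

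The conclusion then follows from connectedness. The alcove $\waf\ac$ lies in the complement of the affine Coxeter arrangement, so for each bounding hyperplane $H$ of $\sommers$ it lies entirely on one side of $H$. The condition ``$H$ does not separate $\waf\ac$ from $\ac$'' for every such $H$ is therefore equivalent to ``$\waf\ac$ lies on the same side of every bounding hyperplane of $\sommers$ as $\ac$'', and since $\ac\subseteq\sommers$ this in turn is equivalent to $\waf\ac\subseteq\sommers$.

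There is no real obstacle here, as the argument is essentially definition chasing via Lemma \ref{arsep}. The only minor care point is the inclusion $\ac\subseteq\sommers$, which would fail if a wall of $\ac$ were a wall of $\sommers$; this is ruled out because walls of $\ac$ come from simple affine roots of height $1$, whereas the walls of $\sommers$ come from affine roots of height $\rat\geq 2$ (using $\gcd(\rat,h)=1$ with $h\geq 2$ to exclude $\rat=1$ in the rank $\geq 1$ case).
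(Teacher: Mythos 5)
Your proof is correct and follows the same route as the paper's, which simply asserts the three facts you unpack: (i) via Lemma~\ref{arsep}, membership in ${}^{\rat}\wa$ is equivalent to no hyperplane of height~$\rat$ separating $\waf\ac$ from $\ac$; (ii) those hyperplanes are exactly the walls of $\sommers$; (iii) $\ac\subseteq\sommers$. Your direct verification of (iii) via $\langle x,\alpha\rangle>0\geq -a$ and $\alpha\leq\hr$ is exactly what is needed, and the connectedness argument in the final paragraph correctly converts the ``not separated by any bounding hyperplane'' condition into $\waf\ac\subseteq\sommers$.

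One small inaccuracy in your closing remark: $\rat=1$ is \emph{not} excluded by $\gcd(\rat,h)=1$, since $1$ is coprime to everything. So walls of $\ac$ and of $\sommers$ \emph{can} coincide (indeed $\sommers=\ac$ when $\rat=1$). The worry itself is also spurious --- both regions are open and defined by strict inequalities, so coinciding walls would not break the inclusion $\ac\subseteq\sommers$. None of this damages your proof, because your direct verification of $\ac\subseteq\sommers$ in the second paragraph already covers the $\rat=1$ case (with $a=0$, $b=1$ the inequalities reduce to the defining inequalities of $\ac$). You should simply delete the parenthetical claim that $\rat=1$ is excluded.
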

\subsection{From the Sommers region to the dilated fundamental alcove}\label{sommtalc}
Fan has shown that there is an affine isometry that maps $\sommers$ to $\rat\ac$, the $\rat$-th dilation of the fundamental alcove \cite[Section 2.3]{fan96euler}.
Sommers has observed that we may choose this isometry to be an element of the affine Weyl group \cite[Theorem 5.7]{sommers05stable}.
\begin{figure}[h]
\begin{center}
 \resizebox*{7cm}{!}{\includegraphics{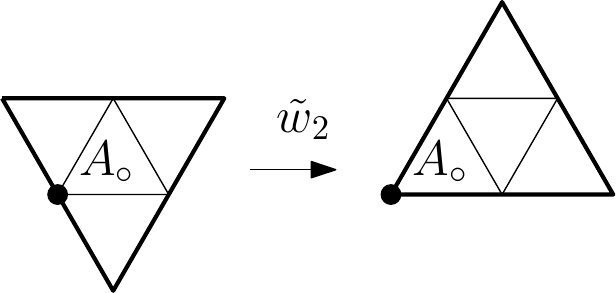}}\\
\end{center}
\caption{For the root system of type $A_2$, the affine Weyl group element $\waf_2=s_{\alpha_1+\alpha_2}^1$ maps the Sommers region $\mathcal{S}_{\Phi}^2$ to the dilated fundamental alcove $2\ac$.}
\end{figure}
\noindent
We will show that there is only one such element.\\
\\
Define $\{\cw_1,\cw_2,\ldots,\cw_r\}$ as the dual basis to the basis $\{\alpha_1,\alpha_2,\ldots,\alpha_r\}$ of simple roots.
Then $\cw_1,\cw_2,\ldots,\cw_r$ are called the \defn{fundamental coweights}. They generate the \defn{coweight lattice}
\[\cwl:=\{x\in V:\langle x,\alpha\rangle\in\Z\text{ for all }\alpha\in\Phi\}.\]
The coroot lattice $\Q$ is a sublattice of $\cwl$.
We define the \defn{index of connection} as
\[f:=[\cwl:\Q].\]
Write $\hr=\sum_{i=1}^rc_i\alpha_i$ as an integer linear combination of the simple roots.
\begin{lemma}\label{ll}
 Any prime that divides the index of connection $f$ or a coefficient $c_i$ for $i\in[r]$ also divides the Coxeter number $h$.
 \begin{proof}
  This may easily be checked \textbf{case-by-case} using tables such as \cite[Section 4.9]{humphreys90reflection}.
  We give a uniform proof also. We have that
  \[\frac{h^r}{c_1c_2\cdots c_rf}=\frac{r!h^r}{|W|}\in\Z\]
  The equality is a consequence of \defn{Weyl's formula} $|W|=r!c_1c_2\cdots c_rf$ \cite[Section 4.9]{humphreys90reflection}.
  The integrality of $\frac{r!h^r}{|W|}$ follows from the fact that it is the degree of the Lyashko-Looijenga morphism \cite[Theorem 5.3]{bessis07finite}.
  Thus any prime that divides $c_1c_2\cdots c_rf$ divides $h$ also.
 \end{proof}

\end{lemma}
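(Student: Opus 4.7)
My plan is to follow the uniform route sketched by the hint rather than grind through case-by-case tables, since the key input is a single divisibility statement. The idea is to show that $c_1c_2\cdots c_r f$ divides $h^r$; once we have that, any prime dividing the left-hand side divides $h^r$ and hence $h$, which is exactly what the lemma asserts.

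The first ingredient is Weyl's order formula $|W|=r!\,c_1c_2\cdots c_r f$, which I would just quote from \cite[Section 4.9]{humphreys90reflection}. Rearranging gives
\[
\frac{h^r}{c_1c_2\cdots c_r f}=\frac{r!\,h^r}{|W|},
\]
so the divisibility I want is equivalent to the assertion that $\frac{r!\,h^r}{|W|}$ is an integer. The second ingredient is the identification of this rational number with the degree of the Lyashko-Looijenga morphism associated to $W$, which by \cite[Theorem 5.3]{bessis07finite} is an integer because it counts something (in fact a number of factorisations of a Coxeter element). With both ingredients in hand the proof is essentially a one-liner.

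The only real obstacle is whether one is willing to invoke the Lyashko-Looijenga theorem, which is a comparatively deep result; as a fallback I would note that the statement can also be verified directly from the tables of highest roots and indices of connection in \cite[Section 4.9]{humphreys90reflection}, giving a self-contained (if unenlightening) \textbf{case-by-case} check. Either way, the proof itself is short: cite Weyl's formula to rewrite the ratio, cite the integrality of the Lyashko-Looijenga degree, deduce that $c_1c_2\cdots c_r f \mid h^r$, and finish with the prime divisibility observation.
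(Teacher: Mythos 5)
Your proposal is essentially identical to the paper's proof: both invoke Weyl's order formula $|W|=r!\,c_1c_2\cdots c_r f$ to rewrite $h^r/(c_1\cdots c_r f)$ as $r!\,h^r/|W|$, then cite the integrality of the degree of the Lyashko-Looijenga morphism from Bessis, and conclude by prime divisibility. The paper also offers the same case-by-case fallback via the tables in Humphreys, so the two arguments agree in every respect.
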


\begin{theorem}\label{wf}
 There exists a unique $\wf\in\wa$ with $\wf(\sommers)=\rat\ac$.
 \begin{proof}
  It remains to show that if $\waf\in\wa$ and $\waf(\rat\ac)=\rat\ac$ then $\waf=e$ is the identity.\\
  \\
  Write the highest root as $\hr=\sum_{i=1}^rc_i\alpha_i$.
  The fundamental alcove $\ac$ has a vertex at $0$, and its other vertices are $\frac{1}{c_i}\cw_i$ for $i\in[r]$. Define $L$ as the lattice generated by $\{\frac{1}{c_i}\cw_i:i\in[r]\}$.
  Then 
  $$[L:\Q]=[L:\cwl][\cwl:\Q]=c_1c_2\cdots c_rf.$$
  So since $\rat$ is relatively prime to $h$ it is also relatively prime to $[L:\Q]$ by Lemma \ref{ll}. This implies that the map 
  \begin{align*}
   L/\Q&\rightarrow L/\Q\\
   x+\Q&\mapsto \rat x+\Q
  \end{align*}
  is invertible. Since $0$ is the only vertex of $\ac$ in the coroot lattice, $\frac{1}{c_i}\cw_i\notin\Q$ for all $i\in[r]$, so we also have $\rat\frac{1}{c_i}\cw_i\notin\Q$ for all $i\in[r]$.
  Thus $0$ is the only vertex of $\rat\ac$ that is in $\Q$.\\
  \\
  Now if $\waf(\rat\ac)=\rat\ac$, then $\waf\cdot0\in\Q$ must be a vertex of $\rat\ac$. Thus $\waf\cdot0=0$. So $\waf\in W$. Since $\waf(\rat\ac)=\rat\ac$ we must have $\waf C=C$, therefore $\waf=e$ as required.
 \end{proof}

\end{theorem}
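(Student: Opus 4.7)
My plan is to split the theorem into existence and uniqueness. Existence is already established in the literature: Fan \cite{fan96euler} exhibits an affine isometry carrying $\sommers$ to $\rat\ac$, and Sommers \cite{sommers05stable} refines this to show that the isometry can be chosen inside $\wa$. So I would quote this and focus my energy on uniqueness.

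For uniqueness, suppose $\waf_1, \waf_2 \in \wa$ both send $\sommers$ to $\rat\ac$. Then $\waf := \waf_1 \waf_2^{-1}$ satisfies $\waf(\rat\ac) = \rat\ac$, and my goal is to show $\waf = e$. The idea is that $\waf$ must permute the vertices of $\rat\ac$, and I want to distinguish the vertex $0$ from the others. The vertices of $\ac$ are $0$ and $\frac{1}{c_i}\cw_i$ for $i \in [r]$ (where $\hr = \sum c_i \alpha_i$), so the vertices of $\rat\ac$ are $0$ and $\frac{\rat}{c_i}\cw_i$.

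The key point I would prove is that $0$ is the unique vertex of $\rat\ac$ lying in the coroot lattice $\Q$. Let $L$ be the lattice generated by $\{\tfrac{1}{c_i}\cw_i : i \in [r]\}$, so $L \supseteq \cwl \supseteq \Q$ and $[L:\Q] = c_1 \cdots c_r \cdot f$. By Lemma \ref{ll}, every prime divisor of $[L:\Q]$ divides $h$, and since $\gcd(\rat,h)=1$ we get $\gcd(\rat, [L:\Q])=1$. Hence multiplication by $\rat$ is a bijection on $L/\Q$. Since $\frac{1}{c_i}\cw_i \notin \Q$ (otherwise $\ac$ would have a second coroot-lattice vertex, contradicting standard facts), this forces $\frac{\rat}{c_i}\cw_i \notin \Q$ for every $i$, as desired.

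With this in hand, the conclusion is short: writing $\waf = w t_\mu$ with $w \in W$ and $\mu \in \Q$, we have $\waf \cdot 0 = \mu \in \Q$. Since $\waf$ permutes the vertices of $\rat\ac$, $\waf \cdot 0$ is a vertex of $\rat\ac$ in $\Q$, hence $\waf \cdot 0 = 0$ and $\mu = 0$, so $\waf = w \in W$. But $\rat\ac \subseteq C$ (the dominant chamber), so $w(C) = C$, and simple transitivity of $W$ on chambers forces $w = e$. The main obstacle is genuinely just the lattice-theoretic step showing $0$ is the unique coroot-lattice vertex of $\rat\ac$; once the coprimality of $\rat$ to $[L:\Q]$ is in place via Lemma \ref{ll}, the remainder is a direct unwinding of the semidirect product structure $\wa = W \ltimes \Q$.
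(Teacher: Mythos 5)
Your proof is correct and follows essentially the same route as the paper: cite Fan and Sommers for existence, then for uniqueness use the lattice $L$ generated by the $\frac{1}{c_i}\cw_i$, invoke Lemma \ref{ll} to get $\gcd(\rat,[L:\Q])=1$, conclude that $0$ is the only vertex of $\rat\ac$ in $\Q$, and finish via $\waf\cdot 0 = 0$ and simple transitivity of $W$ on chambers. The only cosmetic difference is that you spell out the semidirect-product decomposition $\waf = wt_\mu$ explicitly, which the paper leaves implicit.
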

\noindent
For a more explicit construction of $\wf$ see \cite[Section 4.3]{thiel15strange}.
\begin{corollary}\label{ratr}
 $|\wa^{\rat}|=|{^{\rat}\wa}|=\rat^r$.
 \begin{proof}
  By Lemma \ref{sommers} $|\wa^{\rat}|=|{^{\rat}\wa}|$ is the number of alcoves in the Sommers region $\sommers$.
  By Theorem \ref{wf} this equals the number of alcoves in $\rat\ac$. But the volume of $\rat\ac$ is $\rat^r$ times that of $\ac$, so it contains $\rat^r$ alcoves.
 \end{proof}

\end{corollary}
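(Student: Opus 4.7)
The plan is to chain together the bijections established in this section. First, since taking inverses is a bijection on $\wa$, the sets $\wa^{\rat}$ and ${^{\rat}\wa}$ have the same cardinality by definition. So it suffices to count ${^{\rat}\wa}$.

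Next, I would invoke Lemma \ref{sommers}, which identifies ${^{\rat}\wa}$ with the set of affine Weyl group elements $\waf$ such that $\waf\ac\subseteq\sommers$. Since $\wa$ acts simply transitively on alcoves, this cardinality equals the number of alcoves contained in the Sommers region $\sommers$. Then Theorem \ref{wf} provides an element $\wf\in\wa$ with $\wf(\sommers)=\rat\ac$; because $\wf$ permutes alcoves, the number of alcoves in $\sommers$ equals the number of alcoves in $\rat\ac$.

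Finally, I would finish by a volume argument: since $\rat\ac$ is the dilation of $\ac$ by a factor of $\rat$ in the $r$-dimensional ambient space $V$, its volume is $\rat^r$ times the volume of $\ac$. As alcoves tile $V$ and are all isometric, the number of alcoves contained in $\rat\ac$ is exactly $\rat^r$. No step here looks like a serious obstacle, since all the substantive work was done in Lemma \ref{sommers} and Theorem \ref{wf}; the only mild subtlety is making sure the boundary of $\rat\ac$ really is a union of walls of the affine Coxeter arrangement (so that $\rat\ac$ is tiled by whole alcoves), but this is immediate from the fact that $\rat\ac$ is bounded by hyperplanes of the form $H_{\alpha_i}$ and $H_{\hr}^{\rat}$.
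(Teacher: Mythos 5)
Your proposal follows the paper's proof exactly: Lemma \ref{sommers} to reduce to counting alcoves in $\sommers$, Theorem \ref{wf} to transport to $\rat\ac$, and a volume/dilation argument to count $\rat^r$ alcoves. The extra remark that $\rat\ac$ is tiled by whole alcoves (because its walls lie in the affine Coxeter arrangement) is a correct and sensible detail, though the paper leaves it implicit.
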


\subsection{From the dilated fundamental alcove to the finite torus}\label{att}
We follow a remark in \cite[Section 6]{sommers97family}. Define the $\rat$-dilated affine Weyl group as 
\[\wa_{\rat}:=W\ltimes\rat\Q.\]
Let $I_{\rat}:=\{\waf\in\wa:\waf\ac\subseteq\rat\ac\}$.
Since $\overline{\ac}$ is a fundamental domain for the action of $\wa$ on $V$, $\rat\overline{\ac}$ is a fundamental domain for the action of $\wa_{\rat}$ on $V$.
Therefore $I_{\rat}$ is a set of right coset representatives of $\wa_{\rat}$ in $\wa$.\\
\\
Another set of right coset representatives of $\wa_{\rat}$ in $\wa$ is any set of (the translations corresponding to) representatives of the finite torus $\Q/\rat\Q$.
Thus we get a bijection from $I_{\rat}$ to (the translations corresponding to a set of representatives of) $\Q/\rat\Q$ by sending an element $\waf\in I_{\rat}$ to the translation that represents the same coset of $\wa_{\rat}$ in $\wa$.
Explicitly, for $\waf=wt_{\mu}$, this is given by $t_{\mu}=t_{-\waf^{-1}\cdot0}$. So the map
\begin{align*}
 I_{\rat}&\rightarrow\Q/\rat\Q\\
 \waf&\mapsto-\waf^{-1}\cdot0+\rat\Q
\end{align*}
is a bijection.
\begin{figure}[h]
\begin{center}
 \resizebox*{9cm}{!}{\includegraphics{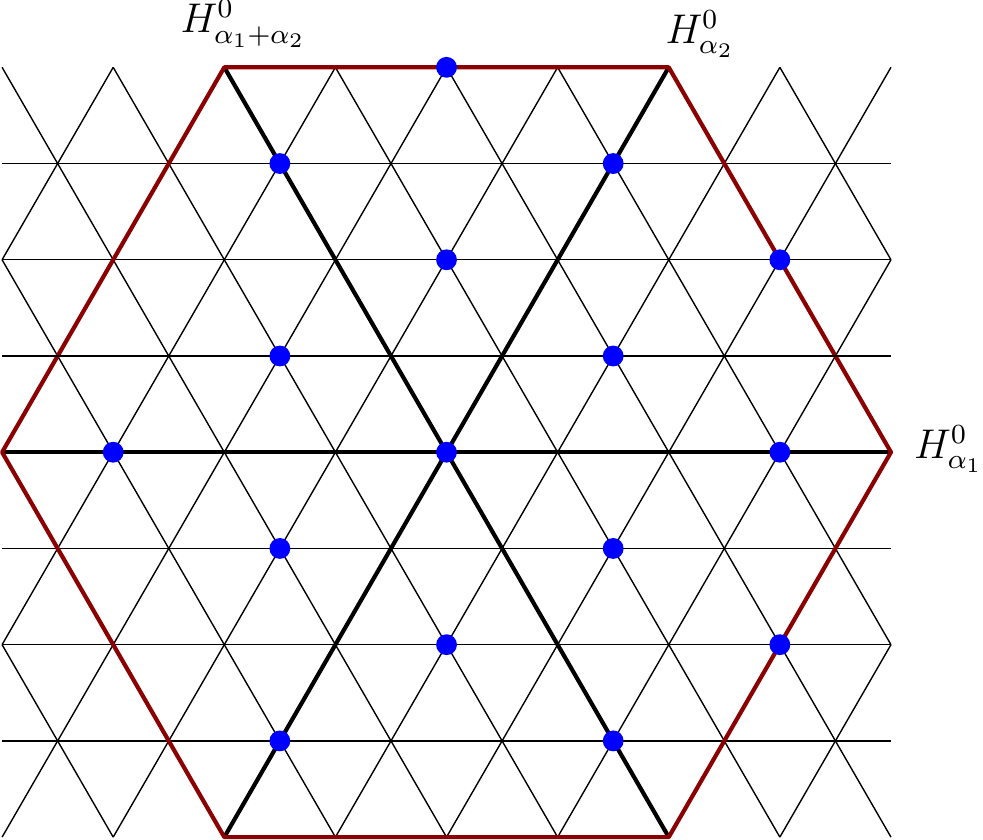}}\\
\end{center}
\caption{The blue dots are a natural set of representatives for the $16$ elements of the finite torus $\Q/4\Q$ of the root system of type $A_2$.}
\end{figure}
\subsection{Putting it all together}
We are now ready to define the uniform Anderson map $\A$ as
\begin{align*}
 \A:\wa^p&\rightarrow\Q/\rat\Q\\
 \waf&\mapsto \waf\waf_{\rat}^{-1}\cdot0+\rat\Q
\end{align*}
\begin{theorem}
 The Anderson map $\A$ is a bijection.
 \begin{proof}
  We start with $\waf\in\wa^{\rat}$. We take its inverse $\waf^{-1}\in{^{\rat}\wa}$. By Lemma \ref{sommers} we have $\waf^{-1}\ac\subseteq\sommers$.
  So by Theorem \ref{wf} we have $\waf_{\rat}\waf^{-1}\ac\subseteq\rat\ac$. That is $\waf_{\rat}\waf^{-1}\in I_{\rat}$.
  So as in Section \ref{att} we map it to 
  \[-(\waf_{\rat}\waf^{-1})^{-1}\cdot0+\rat\Q=-\waf\waf_{\rat}^{-1}\cdot0+\rat\Q\in\Q/\rat\Q.\]
  At the end we multiply by $-1$ to change sign. Each of the steps is bijective, so $\A$ is a bijection.
 \end{proof}

\end{theorem}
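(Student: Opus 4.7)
The plan is to exhibit $\A$ as a composition of four bijections, each of which has been constructed earlier in the section, and then trace an element through the composition to check that the formula $\waf \mapsto \waf\waf_{\rat}^{-1}\cdot 0 + \rat\Q$ really is what comes out. The four bijections are:
\begin{enumerate}
\item Inversion $\wa^{\rat} \to {}^{\rat}\wa$, $\waf \mapsto \waf^{-1}$, which is a bijection by the very definition of ${}^{\rat}\wa$.
\item The map ${}^{\rat}\wa \to \{\text{alcoves in }\sommers\}$ sending $\waf^{-1}$ to the alcove $\waf^{-1}\ac$; this is a bijection by Lemma \ref{sommers} together with the fact that $\wa$ acts simply transitively on alcoves.
\item Left multiplication by $\waf_{\rat}$, which by Theorem \ref{wf} carries alcoves of $\sommers$ to alcoves of $\rat\ac$, and hence gives a bijection $\{\text{alcoves in }\sommers\} \to I_{\rat}$ (viewing both sides as sets of elements $\widetilde{u}\in\wa$ with $\widetilde{u}\ac$ lying in the corresponding simplex).
\item The bijection $I_{\rat} \to \Q/\rat\Q$ from Section \ref{att} given by $\widetilde{u} \mapsto -\widetilde{u}^{-1}\cdot 0 + \rat\Q$.
\end{enumerate}

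To verify the formula, I start with $\waf\in\wa^{\rat}$, pass to $\waf^{-1}$, then to the alcove $\waf^{-1}\ac \subseteq \sommers$, then to the element $\waf_{\rat}\waf^{-1}\in I_{\rat}$, and finally apply step (4) to obtain
\[
-(\waf_{\rat}\waf^{-1})^{-1}\cdot 0 + \rat\Q = -\waf\waf_{\rat}^{-1}\cdot 0 + \rat\Q.
\]
A final application of negation on $\Q/\rat\Q$ (which is itself a bijection, corresponding to the sign change mentioned at the end of the construction) yields exactly $\waf\waf_{\rat}^{-1}\cdot 0 + \rat\Q$, matching the definition of $\A(\waf)$. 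Since each of the five maps is bijective, so is their composition.

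The only real bookkeeping issue, and thus the main potential source of confusion, is keeping track of inverses and signs: step (4) introduces both an inverse and a negation, and one must be careful that $(\waf_{\rat}\waf^{-1})^{-1}\cdot 0 = \waf\waf_{\rat}^{-1}\cdot 0$ as elements of $V$. This is immediate from the fact that $(\waf_{\rat}\waf^{-1})^{-1} = \waf\waf_{\rat}^{-1}$ as group elements, but it is the one place where a careless sign would break the formula. No further obstacle remains, since the cardinality count $|\wa^{\rat}| = \rat^r = |\Q/\rat\Q|$ from Corollary \ref{ratr} is consistent with the bijection and provides a sanity check.
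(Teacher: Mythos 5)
Your proposal is correct and follows essentially the same decomposition as the paper's own proof: inversion, inclusion into the Sommers region, the affine isometry $\waf_{\rat}$, the coset-representative bijection from Section \ref{att}, and a final negation. The paper presents it more tersely but the steps, order, and appeal to Lemma \ref{sommers} and Theorem \ref{wf} are identical.
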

\subsection{The stabilizer of $\A(\waf)$}
At this point we prove a somewhat technical result about $\A$ that will be used in Section \ref{zetasec}.
The Weyl group $W$ acts on the coroot lattice $\Q$ and its dilation $\rat\Q$, so also on the finite torus $\Q/\rat\Q$.
For $\mu+\rat\Q\in\Q/\rat\Q$ we define its \defn{stabilizer} as
\[\mathsf{Stab}(\mu+\rat\Q):=\{w\in W:w(\mu+\rat\Q)=\mu+\rat\Q\}.\]
\begin{theorem}\label{stab}
 For $\waf\in\wa^{\rat}$ the stabilizer of $\A(\waf)\in\Q/\rat\Q$ is generated by $\{s_{\beta}:\beta\in\waf(\ar_{\rat})\cap\Phi\}$.
\end{theorem}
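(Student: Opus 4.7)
The plan is to reduce the problem to a standard description of stabilizers in Coxeter groups, applied to the $\rat$-dilated affine Weyl group $\wa_\rat = W\ltimes\rat\Q$ of Section \ref{att}. Set $\widetilde{u} := \waf_\rat\waf^{-1}$, so that $\widetilde{u} \in I_\rat$ by the argument used to define $\A$, and decompose $\widetilde{u} = ut_\nu$ with $u \in W$ and $\nu \in \Q$. Since $\widetilde{u}\cdot\ac \subseteq \rat\ac$, the point $p := \widetilde{u}\cdot 0 = u\nu$ lies in $\rat\overline{\ac}$, whereas $\A(\waf) = \widetilde{u}^{-1}\cdot 0 + \rat\Q = -\nu + \rat\Q$. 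The $W$-action on $\Q/\rat\Q$ commutes with negation, so $\mathsf{Stab}(\A(\waf)) = \mathsf{Stab}(\nu + \rat\Q)$, and since $\nu = u^{-1}p$, a direct conjugation argument yields
\[
\mathsf{Stab}(\A(\waf)) = u^{-1}\,\mathsf{Stab}(p + \rat\Q)\,u.
\]

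Next, I view $\wa_\rat$ as a Coxeter group with fundamental alcove $\rat\ac$ and simple reflections $s_{\alpha_1},\ldots,s_{\alpha_r},s_\hr^\rat$. Since $p \in \rat\overline{\ac}$, the standard theory of Coxeter complexes tells us that $\mathsf{Stab}_{\wa_\rat}(p)$ is the parabolic subgroup generated by precisely those simple reflections that fix $p$, namely the elements of $\{s_{\alpha_i} : \langle p,\alpha_i\rangle = 0\} \cup \{s_\hr^\rat : \langle p,\hr\rangle = \rat\}$. Taking images under the projection $\wa_\rat \twoheadrightarrow W$ with kernel $\rat\Q$ (which sends $s_\hr^\rat = t_{\rat\hr^\vee}s_\hr$ to $s_\hr$) identifies $\mathsf{Stab}(p + \rat\Q)$, and conjugating by $u^{-1}$ then shows that $\mathsf{Stab}(\A(\waf))$ is generated by $\{s_{u^{-1}\alpha_i} : \langle p,\alpha_i\rangle = 0\}\cup\{s_{u^{-1}\hr} : \langle p,\hr\rangle = \rat\}$.

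It remains to identify this generating set with $\{s_\beta : \beta \in \waf(\ar_\rat)\cap\Phi\}$. Because $\sommers$ is a simplex and $|\ar_\rat| = r+1$ (this equality following from the classical fact that every $\rat$ coprime to $h$ is congruent modulo $h$ to an exponent of $W$ of multiplicity one), no defining inequality of $\sommers$ is redundant, and $\waf_\rat$ restricts to a sign-preserving bijection $\ar_\rat \xrightarrow{\sim} \{\alpha_1,\ldots,\alpha_r,-\hr+\rat\delta\}$ onto the positive affine roots defining the walls of $\rat\ac$. Using $\waf = \widetilde{u}^{-1}\waf_\rat$, for $\beta \in \Phi$ one has $\waf^{-1}(\beta) = \waf_\rat^{-1}(\widetilde{u}(\beta))$, which lies in $\ar_\rat$ if and only if $\widetilde{u}(\beta) \in \{\alpha_1,\ldots,\alpha_r,-\hr+\rat\delta\}$. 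A direct computation from $\widetilde{u} = ut_\nu$ and $\nu = u^{-1}p$ yields $\widetilde{u}(\beta) = u\beta - \langle p, u\beta\rangle\delta$, so the equation $\widetilde{u}(\beta) = \alpha_i$ forces $\beta = u^{-1}\alpha_i$ together with $\langle p,\alpha_i\rangle = 0$, while $\widetilde{u}(\beta) = -\hr + \rat\delta$ forces $\beta = -u^{-1}\hr$ together with $\langle p,\hr\rangle = \rat$. Since $s_\beta = s_{-\beta}$, the two generating sets coincide.

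The main obstacle is this last matching step: one must carefully track signs to verify that $\waf_\rat$ is truly a sign-preserving bijection from $\ar_\rat$ onto $\{\alpha_1,\ldots,\alpha_r,-\hr+\rat\delta\}$ rather than accidentally producing a $-\alpha_i$ or $\hr - \rat\delta$, and the bijectivity itself rests on the non-trivial coincidence $|\ar_\rat| = r+1$ which rules out the possibility of redundant inequalities in the definition of $\sommers$ producing spurious elements of $\waf(\ar_\rat)\cap\Phi$.
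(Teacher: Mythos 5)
Your proof is correct, and its core is the same as the paper's: decompose $\waf_\rat\waf^{-1}$ into its $W$-part and translation part, reduce the stabilizer computation to a point $p\in\rat\overline{\ac}\cap\Q$ by negation and conjugation, describe $\mathsf{Stab}(p+\rat\Q)$ as a (conjugate of a) parabolic subgroup, and then match its reflection generators with $\{s_\beta:\beta\in\waf(\ar_\rat)\cap\Phi\}$ by tracking the defining inequalities of $\rat\ac$ backwards through $\waf_\rat$. The one genuine divergence is the middle step: the paper quotes Haiman's Lemma~7.4.1 (stated here as Lemma~\ref{orb}) as a black box to get the stabilizer of $p+\rat\Q$ in $W$, whereas you re-derive it from scratch by working in the Coxeter system $(\wa_\rat,\{s_{\alpha_1},\dots,s_{\alpha_r},s_\hr^\rat\})$ — first identifying $\mathsf{Stab}_{\wa_\rat}(p)$ as the standard parabolic generated by the simple reflections fixing $p$, then checking that the projection $\wa_\rat\twoheadrightarrow W$ with kernel $\rat\Q$ carries this subgroup isomorphically onto $\mathsf{Stab}_W(p+\rat\Q)$. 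This is a clean, self-contained replacement for the citation; what it costs you is precisely the care you flag in your last paragraph, namely verifying that none of the inequalities defining $\sommers$ is redundant, so that $\waf_\rat$ really restricts to a sign-preserving bijection from $\ar_\rat$ onto $\simp\cup\{-\hr+\rat\delta\}$. The equality $|\ar_\rat|=r+1$ does indeed follow from the multiplicity-one statement about exponents coprime to $h$ (a consequence of Springer's theory of regular elements applied to powers of a Coxeter element), and once you have it the sign-preservation is forced because every element of $\ar_\rat$ has positive height, hence lies in $\ar^+$, hence defines a half-space containing $\ac$ and therefore a half-space that $\waf_\rat$ sends to one containing $\rat\ac$. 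The paper uses the same bijectivity fact implicitly (it asserts $\waf_\rat(\ar_\rat)=\simp\cup\{-\hr+\rat\delta\}$ without elaboration), so your making the cardinality count explicit is a small but genuine strengthening of the exposition.
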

\noindent
To prove this, we will need the following result due to Haiman.
\begin{lemma}[\protect{ \cite[Lemma 7.4.1]{haiman94diagonal}}]\label{orb}
 The set $\rat\overline{\ac}\cap\Q$ is a system of representatives for the orbits of the $W$-action on $\Q/\rat\Q$. 
 The stabilizer of an element of $\Q/\rat\Q$ represented by $\mu\in\rat\overline{\ac}\cap\Q$ is generated by the reflections through the linear hyperplanes parallel to the walls of $\rat\overline{\ac}$ that contain $\mu$.
\end{lemma}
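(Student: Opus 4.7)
The plan is to apply Haiman's Lemma \ref{orb} to a convenient representative of $\A(\waf)$ in its $W$-orbit, and then match the resulting generating set with the one asserted in the statement.

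Since $\waf\in\wa^{\rat}$ the construction of $\A$ shows that $\wf\waf^{-1}\ac\subseteq\rat\ac$, so its vertex $\nu:=\wf\waf^{-1}\cdot 0$ automatically lies in $\rat\overline{\ac}\cap\Q$. Writing $\wf\waf^{-1}=w_\varphi t_{\mu_\varphi}$ in the semidirect decomposition $\wa=W\ltimes\Q$ gives $\nu=w_\varphi(\mu_\varphi)$, while the definition of $\A$ reads $\A(\waf)=\waf\wf^{-1}\cdot 0+\rat\Q=-\mu_\varphi+\rat\Q$. Hence $\A(\waf)$ and $\nu+\rat\Q$ lie in the same $W$-orbit, and a direct check yields $\mathsf{Stab}_W(\A(\waf))=w_\varphi^{-1}\,\mathsf{Stab}_W(\nu+\rat\Q)\,w_\varphi$. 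Lemma \ref{orb} applied to $\nu$ then shows that $\mathsf{Stab}_W(\nu+\rat\Q)$ is generated by $\{s_{\alpha_i}:\langle\nu,\alpha_i\rangle=0\}\cup\{s_\theta:\langle\nu,\theta\rangle=\rat\}$, corresponding to the walls of $\rat\overline{\ac}$ through $\nu$, and conjugation by $w_\varphi^{-1}$ replaces each $s_\gamma$ by $s_{w_\varphi^{-1}(\gamma)}$.

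The final step is to match this generating set with $\{s_\beta:\beta\in\waf(\ar_{\rat})\cap\Phi\}$. Here I would use the key observation that $\wf$ sends the walls of $\sommers$ bijectively to the walls of $\rat\ac$ and thereby induces a bijection from $\ar_{\rat}$ onto the positive simple affine roots $\{\alpha_1,\dots,\alpha_r,\,-\theta+\rat\delta\}$ of $\rat\ac$. Setting $\gamma_i:=\wf^{-1}(\alpha_i)$ for $i\in[r]$ and $\gamma_0:=\wf^{-1}(-\theta+\rat\delta)$, so $\ar_{\rat}=\{\gamma_0,\gamma_1,\dots,\gamma_r\}$, a short computation applying the formula for the $\wa$-action on affine roots to $\waf\wf^{-1}=w_\varphi^{-1}t_{-\nu}$ produces
$$\waf(\gamma_i)=w_\varphi^{-1}(\alpha_i)+\langle\nu,\alpha_i\rangle\,\delta,\qquad \waf(\gamma_0)=-w_\varphi^{-1}(\theta)+(\rat-\langle\nu,\theta\rangle)\,\delta.$$
Consequently $\waf(\gamma_i)\in\Phi$ iff $\langle\nu,\alpha_i\rangle=0$ (in which case it equals $w_\varphi^{-1}(\alpha_i)$), and $\waf(\gamma_0)\in\Phi$ iff $\langle\nu,\theta\rangle=\rat$ (in which case it equals $-w_\varphi^{-1}(\theta)$). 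Using $s_{-\beta}=s_\beta$, this is exactly the list of generators just produced from Lemma \ref{orb}.

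The main obstacle I anticipate is the asymmetry between the two kinds of walls of $\rat\ac$: the walls $H_{\alpha_i}^0$ are bounded by the linear simple roots $\alpha_i$, whereas $H_\theta^\rat$ is bounded by the genuinely affine simple root $-\theta+\rat\delta$. Tracking this asymmetry is what introduces the sign and the $\rat$-shift in the formula for $\waf(\gamma_0)$, and the bookkeeping needed so that these cancel against $\langle\nu,\theta\rangle=\rat$ on the Haiman side is the delicate point of the proof.
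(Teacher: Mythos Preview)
Your proposal does not address the statement you were given. Lemma~\ref{orb} is Haiman's result that $\rat\overline{\ac}\cap\Q$ is a system of $W$-orbit representatives on $\Q/\rat\Q$ and describes the stabilizer of a point represented in this fundamental domain. The paper does not prove Lemma~\ref{orb} at all; it simply quotes it from \cite{haiman94diagonal} and uses it as input. What you have written is instead a proof of Theorem~\ref{stab}, the statement immediately preceding the lemma, which identifies the stabilizer of $\A(\waf)$ with $\langle s_\beta:\beta\in\waf(\ar_\rat)\cap\Phi\rangle$. Your argument \emph{applies} Lemma~\ref{orb} rather than establishing it.

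If we read your proposal as a proof of Theorem~\ref{stab}, it is correct and essentially identical to the paper's own proof: the paper also writes $\waf\wf^{-1}=ut_{-\mu}$ (your $u$ is $w_\varphi^{-1}$ and your $\nu$ is the paper's $\mu$), observes $\mu=\wf\waf^{-1}\cdot 0\in\rat\overline{\ac}\cap\Q$, uses the fact that $\wf(\ar_\rat)=\simp\cup\{-\theta+\rat\delta\}$, computes $\waf\wf^{-1}$ applied to each of these to determine when the result lies in $\Phi$, and then conjugates the Haiman generators by $u$. The bookkeeping you flag as the ``main obstacle'' is exactly the case split $\alpha\in\simp$ versus $-\theta+\rat\delta$ carried out in the paper's chain of equivalences.

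A genuine proof of Lemma~\ref{orb} would instead argue that $\rat\overline{\ac}$ is a fundamental domain for $\wa_\rat=W\ltimes\rat\Q$ on $V$, hence every $W$-orbit on $\Q/\rat\Q$ meets $\rat\overline{\ac}\cap\Q$ exactly once, and that the stabilizer in $\wa_\rat$ of a point of $\rat\overline{\ac}$ is generated by the reflections in the walls through it, from which the claim about $W$-stabilizers follows by projecting mod $\rat\Q$.
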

\begin{proof}[Proof of Theorem \ref{stab}]
 Suppose $\waf\in\wa^{\rat}$. Observe first that this implies $\waf(\ar_{\rat})\subseteq\ar^+$, so $\waf(\ar_{\rat})\cap\Phi\subseteq\Phi^+$.
 Write $\waf\waf_{\rat}^{-1}=ut_{-\mu}$.
 We have $\waf_{\rat}\waf^{-1}\ac\subseteq\rat\ac$, so $\mu=\waf_{\rat}\waf^{-1}\cdot0\in\rat\overline{\ac}\cap\Q$.
 We wish to show that the stabilizer of 
 \[\A(\waf)=\waf\waf_{\rat}^{-1}\cdot0+\rat\Q=-u(\mu)+\rat\Q\]
 in $\Q/\rat\Q$ is generated by $\{s_{\beta}:\beta\in\waf(\ar_{\rat})\cap\Phi\}$.\\
 \\
 First observe that $\waf_{\rat}$ maps the walls of $\sommers$ to the walls of $\rat\ac$.
 In terms of affine roots this means that 
 \[\waf_{\rat}(\ar_{\rat})=\simp\cup\{-\hr+\rat\delta\}.\]
 Now calculate that for $\beta\in\Phi^+$ we have the following equivalences:
 \begin{align*}
  &\beta\in\waf(\ar_{\rat})\\
  &\Leftrightarrow \waf^{-1}(\beta)\in\ar_{\rat}\\
  &\Leftrightarrow \waf_{\rat}\waf^{-1}(\beta)\in \waf_{\rat}(\ar_{\rat})=\simp\cup\{-\hr+\rat\delta\}\\
  &\Leftrightarrow \beta=\waf\waf_{\rat}^{-1}(\alpha)\text{ for some }\alpha\in\simp\text{ or }\beta=\waf\waf_{\rat}^{-1}(-\hr+\rat\delta)\\
  &\Leftrightarrow \beta=u(\alpha)\text{ and }\langle\mu,\alpha\rangle=0\text{ for some }\alpha\in\simp\text{ or }\beta=u(-\hr)\text{ and }\langle\mu,\hr\rangle=\rat.
 \end{align*}
 Here we used $\waf\waf_{\rat}^{-1}=ut_{-\mu}$ and the definition of the action of $\wa$ on $\ar$.
 Combining this with Lemma \ref{orb} we get
 \begin{align*}
  &\mathsf{Stab}(\waf\waf_{\rat}^{-1}\cdot0+\rat\Q)\\
  &=\mathsf{Stab}(-u(\mu)+\rat\Q)\\
  &=u\mathsf{Stab}(\mu+\rat\Q)u^{-1}\\
  &=u\langle s_{\alpha}:\mu\text{ lies in a wall of }\rat\overline{\ac}\text{ orthogonal to }\alpha\rangle u^{-1}\\
  &=u\langle s_{u^{-1}(\beta)}:\beta\in \waf(\ar_{\rat})\cap\Phi\rangle u^{-1}\\
  &=\langle s_{\beta}:\beta\in \waf(\ar_{\rat})\cap\Phi\rangle,
 \end{align*}
 as required.
\end{proof}
\section{The combinatorial Anderson map and the uniform Anderson map}\label{and+and}
It remains to relate the uniform Anderson map $\A$ defined in Section \ref{and} to the combinatorial Anderson map $\A_{GMV}$ defined in Section \ref{andgmv}.
So for this section, let $\Phi$ be a root system of type $A_{n-1}$.\\
\\
First note that the set of $\rat$-stable affine Weyl group elements $\wa^{\rat}$ coincides with the set of $\rat$-stable affine permutations $\widetilde{S}_n^{\rat}$ \cite[Section 2.3]{gorsky14affine}.
It remains to relate the finite torus $\Q/\rat\Q$ to the set $\packrat$ of rational $\rat/n$-parking functions.
\subsection{Parking functions and the finite torus}\label{ptt}
We follow \cite[Section 5.1]{athanasiadis05refinement}.
First recall that
$$\Q=\{(x_1,x_2,\ldots,x_n)\in\Z^n:\sum_{i=1}^nx_i=0\}.$$
The natural projection 
$$\mathrm{mod}\text{ } \rat:\Q\rightarrow \{(x_1,x_2,\ldots,x_n)\in\Z_{\rat}^n:\sum_{i=1}^nx_i=0\}$$
has kernel $\rat\Q$.
Futhermore, since $n$ and $\rat$ are relatively prime, the natural projection
$$\mathrm{mod}\text{ } (1,1,\ldots,1):\{(x_1,x_2,\ldots,x_n)\in\Z_{\rat}^n:\sum_{i=1}^nx_i=0\}\rightarrow \Z_{\rat}^n/(1,1,\ldots,1)$$
is a bijection to the set $\Z_{\rat}^n/(1,1,\ldots,1)$ of cosets of the cyclic subgroup of $\Z_{\rat}^n$ generated by $(1,1,\ldots,1)$.
Thus if $\pi_{\Q}:=\mathrm{mod}\text{ } (1,1,\ldots,1)\circ\mathrm{mod}\text{ } \rat$, then
$$\pi_{\Q}:\Q/\rat\Q\rightarrow\Z_{\rat}^n/(1,1,\ldots,1)$$
is a well-defined bijection.\\
\\
Recall from Theorem \ref{parkreps} that the set of rational $\rat/n$-parking functions is a set of representatives for $\Z_{\rat}^n/(1,1,\ldots,1)$, so the natural projection
$$\pi_{\pf}:\pf_{\rat/n}\rightarrow\Z_{\rat}^n/(1,1,\ldots,1)$$
is a bijection.\\
\\
Note that $W=S_n$ naturally acts on $\Q/\rat\Q$, $\pf_{\rat/n}$ and $\Z_{\rat}^n/(1,1,\ldots,1)$ and that both $\pi_{\Q}$ and $\pi_{\pf}$ are isomorphisms with respect to these actions.
So we define $\ttpf:=\pi_{\pf}^{-1}\circ\pi_{\Q}$ as the natural $S_n$-isomorphism from $\Q/\rat\Q$ to $\packrat$.
\subsection{The Anderson maps are equivalent}
The following theorem interprets the combinatorial Anderson map $\A_{GMV}$ as the uniform Anderson map $\A$ specialised to type $A_{n-1}$.
\begin{figure}[h]
\begin{center}
\begin{tikzpicture}
  \matrix (m) [matrix of math nodes,row sep=2em,column sep=5em,minimum width=2em]
  {
     {} & \Q/\rat\Q & {}\\
     \wa^{\rat}=\widetilde{S}_n^{\rat} & {} & \Z^n_{\rat}/(1,1,\ldots,1)\\
     {} & \packrat & {}\\};
  \path[-stealth]
    (m-2-1) edge node [above] {$\A$} (m-1-2)
    (m-3-2) edge node [above] {$\pi_{\pf}$} (m-2-3)
    (m-2-1) edge node [above] {$\A_{GMV}$} (m-3-2)
    (m-1-2) edge node [above] {$\pi_{\Q}$} (m-2-3)
    (m-1-2) edge node [left] {$\ttpf$} (m-3-2);
\end{tikzpicture}
\caption{Theorem \ref{GMV} as a commutative diagram of bijections.}
\end{center}
\end{figure}
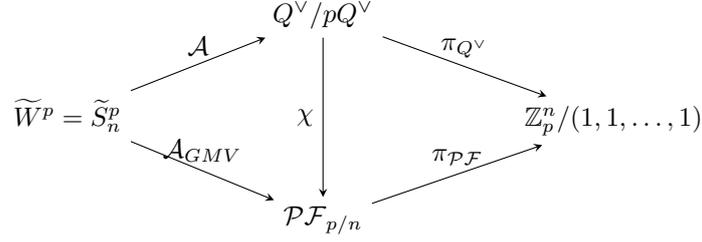
\begin{theorem}\label{GMV}
 Suppose $\Phi$ is of type $A_{n-1}$ and $\rat$ is a positive integer relatively prime to $n$. Then
 $$\pi_{\Q}\circ\A=\pi_{\pf}\circ\A_{GMV}.$$
 \begin{proof}
Let $\waf\in\wa^{\rat}=\widetilde{S}_n^{\rat}$. Refer to Section \ref{andgmv} for the construction of $\A_{GMV}(\waf)$. We employ the same notation as in that section here.\\
\\
We first consider the case where $\waf=\waf_{\rat}$, as defined in Theorem \ref{wf}.
Since 
\[\waf_{\rat}(\ar_{\rat})=\simp\cup\{-\hr+\rat\delta\}\subseteq\ar^+\]
we indeed have $\waf_{\rat}\in\wa^{\rat}$. From \cite[Lemma 2.16]{gorsky14affine} we get that its inverse is
$$\waf_{\rat}^{-1}=[\rat-c,2\rat-c,\ldots,n\rat-c]$$
where $c=\frac{(\rat-1)(n+1)}{2}$. Since $\Delta_{\waf_{\rat}}=\waf_{\rat}^{-1}(\Z_{>0})$ the set of lowest gaps of the runners of the balanced abacus $\mathsf{A}(\Delta_{\waf_{\rat}})$ is 
$$\{\waf_{\rat}^{-1}(1),\waf_{\rat}^{-1}(2)\ldots \waf_{\rat}^{-1}(n)\}=\{\rat-c,2\rat-c,\ldots,n\rat-c\}.$$
Thus the set of lowest gaps of the runners of the normalized abacus $\mathsf{A}(\widetilde{\Delta}_{\waf_{\rat}})$ is 
\[\{0,\rat,2\rat,\ldots,(n-1)\rat\}.\]
This is exactly the set of labels of $(-1,i-1)$ for $i\in[n]$. Thus all the labels in $R_{\rat,n}$ are beads in $\mathsf{A}(\widetilde{\Delta}_{\waf_{\rat}})$.
Therefore $D_{\waf_{\rat}}$ is empty and $\A_{GMV}(\waf_{\rat})=(P_{\waf_{\rat}},\sigma)=(0,0,\ldots,0)$.\\
\\
For $x,y\in\Z^n$ write $x\equiv y$ if the projections of $x$ and $y$ into $\Z_{\rat}^n/(1,1,\ldots,1)$ agree.
Then $\equiv$ is compatible both with addition and with the $S_n$-action on $\Z^n$. The set of lowest gaps of the runners of the abacus $\mathsf{A}(\widetilde{\Delta}_{\waf_{\rat}}+\rat)$ is $\{\rat,2\rat,\ldots,n\rat\}$. Thus
$$\lvl(\mathsf{A}(\widetilde{\Delta}_{\waf_{\rat}}+\rat))=\lvl(\mathsf{A}(\widetilde{\Delta}_{\waf_{\rat}}))+(0,0,\ldots,0,\rat)\equiv\lvl(\mathsf{A}(\widetilde{\Delta}_{\waf_{\rat}})).$$
We also have
$$\lvl(\mathsf{A}(\widetilde{\Delta}_{\waf_{\rat}}+n))=\lvl(\mathsf{A}(\widetilde{\Delta}_{\waf_{\rat}}))+(1,1,\ldots,1)\equiv\lvl(\mathsf{A}(\widetilde{\Delta}_{\waf_{\rat}})).$$
In terms of the bijection $g$ from Section \ref{abacus}, this means that
$$g^{\rat}\cdot\lvl(\mathsf{A}(\widetilde{\Delta}_{\waf_{\rat}}))\equiv\lvl(\mathsf{A}(\widetilde{\Delta}_{\waf_{\rat}}))$$
and
$$g^n\cdot\lvl(\mathsf{A}(\widetilde{\Delta}_{\waf_{\rat}}))\equiv\lvl(\mathsf{A}(\widetilde{\Delta}_{\waf_{\rat}})).$$
Since $\rat$ and $n$ are relatively prime, this implies that
\begin{equation}\label{g}
 g\cdot\lvl(\mathsf{A}(\widetilde{\Delta}_{\waf_{\rat}}))\equiv\lvl(\mathsf{A}(\widetilde{\Delta}_{\waf_{\rat}})).
\end{equation}
Now take any $\waf\in\widetilde{S}_n^{\rat}$. Note that the labels of boxes in the $i$-th row of the Young diagram $D_{\waf}$ from the bottom (those with $y$-coordinate $i-1$) are those congruent to $\rat(i-1)$ modulo $n$.
Thus we define the permutation $\tau\in S_n$ by 
$$\tau(i)\equiv \rat(i-1)\text{ }\mathrm{mod}\text{ }n$$
for all $i\in[n]$. The fact that $\rat$ is relatively prime to $n$ implies that this indeed gives a permutation of $n$.\\
\begin{figure}[h]
\begin{center}
\begin{tikzpicture}[scale=.7]
\begin{scope}
	\draw[gray] (0,0) grid (8,5);
	\draw[very thick] (0,0)--(0,2)--(1,2)--(1,3)--(3,3)--(3,4)--(6,4)--(6,5)--(8,5);
	\draw[thick] (0,0)--(8,5);
	\draw[xshift=5mm,yshift=5mm]
		(0,0) node{\large{-5}}
		(1,0) node{\large{-10}}
		(2,0) node{\large{-15}}
		(3,0) node{\large{-20}}
		(4,0) node{\large{-25}}
		(5,0) node{\large{-30}}
		(6,0) node{\large{-35}}
		(7,0) node{\large{-40}}
		(0,1) node[fill=gray!40,circle]{\large{3}}
		(1,1) node{\large{-2}}
		(2,1) node{\large{-7}}
		(3,1) node{\large{-12}}
		(4,1) node{\large{-17}}
		(5,1) node{\large{-22}}
		(6,1) node{\large{-27}}
		(7,1) node{\large{-32}}
		(0,2) node{\large{11}}
		(1,2) node[fill=gray!40,circle]{\large{6}}
		(2,2) node[fill=gray!40,circle]{\large{1}}
		(3,2) node{\large{-4}}
		(4,2) node{\large{-9}}
		(5,2) node{\large{-14}}
		(6,2) node{\large{-19}}
		(7,2) node{\large{-24}}
		(0,3) node{\large{19}}
		(1,3) node{\large{14}}
		(2,3) node{\large{9}}
		(3,3) node[fill=gray!40,circle]{\large{4}}
		(4,3) node{\large{-1}}
		(5,3) node{\large{-6}}
		(6,3) node{\large{-11}}
		(7,3) node{\large{-16}}
		(0,4) node{\large{27}}
		(1,4) node{\large{22}}
		(2,4) node{\large{17}}
		(3,4) node{\large{12}}
		(4,4) node{\large{7}}
		(5,4) node{\large{2}}
		(6,4) node{\large{-3}}
		(7,4) node{\large{-8}};
	\draw[xshift=5mm,yshift=5mm]
		(-1,0) node[color=blue,]{\large{2}}
		(-1,1) node[color=blue,]{\large{4}}
		(-1,2) node[color=blue,]{\large{3}}
		(-1,3) node[color=blue,]{\large{5}}
		(-1,4) node[color=blue,]{\large{1}};
\end{scope}
\end{tikzpicture}
\caption{The vertically labelled $8/5$-Dyck path $\A_{GMV}(\waf)$ for $\waf=[0,7,-2,6,4]$.
In this case we have $\tau=53142$ and $M_{\waf}=-3$.
The positive beads of the normalized abacus $\mathsf{A}(\widetilde{\Delta}_{\waf})$ are shaded in gray.
To reconstruct $\waf$ from $\A_{GMV}(\waf)$ one may proceed as follows: Read the labels of the boxes to the left of the North steps ($0,8,11,9,2$), reorder them according to the labels of the North steps ($2,0,11,8,9$) and add $M_{\waf}=1-\#\{\text{boxes between the path and the diagonal}\}$ to get the window notation of $\waf^{-1}=[-1,-3,8,5,6]$.
}
\end{center}
\end{figure}
\\
Let $P_i$ be the number of boxes on the $i$-th row of $D_{\waf}$ from the bottom.
This is the number of gaps of $\mathsf{A}(\widetilde{\Delta}_{\waf})$ on runner $\tau(i)$ that are in $R_{\rat,n}$.
Equivalently, it is the number of gaps of $\mathsf{A}(\widetilde{\Delta}_{\waf})$ on runner $\tau(i)$ that are smaller than the smallest gap on runner $\tau(i)$ of $\mathsf{A}(\widetilde{\Delta}_{\waf_{\rat}})$.
Thus 
$$P_i=\mathsf{level}_{\tau(i)}(\mathsf{A}(\widetilde{\Delta}_{\waf_{\rat}}))-\mathsf{level}_{\tau(i)}(\mathsf{A}(\widetilde{\Delta}_{\waf})),$$
that is
\begin{equation}\label{p}
 (P_1,P_2,\ldots,P_n)=\tau^{-1}\cdot\left[\lvl(\mathsf{A}(\widetilde{\Delta}_{\waf_{\rat}}))-\lvl(\mathsf{A}(\widetilde{\Delta}_{\waf}))\right].
\end{equation}
Now we start looking at the labelling $\sigma$ of the $\rat/n$-Dyck path $P_{\waf}$.
We have for $i\in[n]$
$$\sigma(i):=\waf(l_i+M_{\waf})\equiv \waf(\tau(i)+M_{\waf})\modn.$$
Define $r\in S_n$ by $r(i)\equiv i+1\modn$. Write $\waf=wt_{-\mu}$ with $w\in W=S_n$ and $\mu\in\Q$, simultaneously viewing $w$ as an affine permutation in $\widetilde{S}_n$ also.
Then 
$$w(r^{M_{\waf}}(\tau(i)))\equiv w(\tau(i)+M_{\waf})\equiv \waf(\tau(i)+M_{\waf})\equiv\sigma(i)\modn.$$
Since $\sigma(i)$ and $w(r^{M_{\waf}}(\tau(i)))$ are congruent modulo $n$ and both in $[n]$, they are equal.
Thus 
\begin{equation}\label{sigma}
 \sigma=w\circ r^{M_{\waf}}\circ\tau.
\end{equation}
Now we calculate
\begin{align*}
 \A_{GMV}(\waf)&=(P_{\waf},\sigma)\\
 &=\sigma\cdot(P_1,P_2,\ldots,P_n)\\
 &=(w\circ r^{M_{\waf}}\circ\tau)\cdot\tau^{-1}\cdot\left[\lvl(\mathsf{A}(\widetilde{\Delta}_{\waf_{\rat}}))-\lvl(\mathsf{A}(\widetilde{\Delta}_{\waf}))\right]\\
 &\equiv(w\circ r^{M_{\waf}})\cdot\left[g^{M_{\waf_{\rat}}-M_{\waf}}\cdot\lvl(\mathsf{A}(\widetilde{\Delta}_{\waf_{\rat}}))-\lvl(\mathsf{A}(\widetilde{\Delta}_{\waf}))\right]\\
 &=(w\circ r^{M_{\waf}})\cdot\left[g^{-M_{\waf}}\cdot\lvl(\mathsf{A}(\Delta_{\waf_{\rat}}))-g^{-M_{\waf}}\cdot\lvl(\mathsf{A}(\Delta_{\waf}))\right]\\
 &=(w\circ r^{M_{\waf}})\cdot\left[r^{-M_{\waf}}\cdot\left[\lvl(\mathsf{A}(\Delta_{\waf_{\rat}}))-\lvl(\mathsf{A}(\Delta_{\waf}))\right]\right]\\
 &=w\cdot(\waf_{\rat}^{-1}\cdot0-\waf^{-1}\cdot0)\\
 &=w\cdot(\waf_{\rat}^{-1}\cdot0-\mu)\\
 &=wt_{-\mu}\waf_{\rat}^{-1}\cdot0\\
 &=\waf \waf_{\rat}^{-1}\cdot0\\
 &\equiv\A(\waf).
\end{align*}
Here we used Equation (\ref{sigma}), Equation (\ref{p}), Equation (\ref{g}), Equation (\ref{glvl}) and Theorem \ref{abacuscoroot}, in that order.
\end{proof}

\end{theorem}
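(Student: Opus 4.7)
My plan is to pick an arbitrary $\waf\in\widetilde S_n^{\rat}$, compute an explicit lift of $\pi_{\pf}(\A_{GMV}(\waf))\in\Z_{\rat}^n/(1,1,\ldots,1)$ to $\Z^n$ using the abacus machinery of Section \ref{abacus}, and show it coincides with $\waf\waf_{\rat}^{-1}\cdot 0$ modulo the subgroup generated by $(1,1,\ldots,1)$ and $\rat$. Under the identification of Section \ref{vert}, $\A_{GMV}(\waf)$ corresponds to the vector $\sigma\cdot(P_1,\ldots,P_n)\in\Z^n$, while $\A(\waf)=\waf\waf_{\rat}^{-1}\cdot 0\in\Q$, so the whole problem reduces to comparing these two vectors under $\equiv$.

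First I would handle the base case $\waf=\waf_{\rat}$. Using the explicit window $\waf_{\rat}^{-1}=[\rat-c,2\rat-c,\ldots,n\rat-c]$, the lowest gaps of $\mathsf{A}(\widetilde\Delta_{\waf_{\rat}})$ are $\{0,\rat,2\rat,\ldots,(n-1)\rat\}$, which match the labels in the column $x=-1$ of $R_{\rat,n}$. Hence $D_{\waf_{\rat}}$ is empty and $\A_{GMV}(\waf_{\rat})$ is the zero parking function, agreeing with $\A(\waf_{\rat})=0+\rat\Q$. More importantly, this case yields the pivotal congruence that a single $g$-step fixes $\lvl(\mathsf{A}(\widetilde\Delta_{\waf_{\rat}}))$ modulo $(1,1,\ldots,1)$: shifting $\widetilde\Delta_{\waf_{\rat}}$ by $\rat$ or by $n$ changes the level vector by $(0,\ldots,0,\rat)$ or $(1,1,\ldots,1)$, each trivial modulo $(1,1,\ldots,1)$, and coprimality of $\rat$ and $n$ combines these via B\'ezout.

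For general $\waf$, I would introduce the permutation $\tau$ with $\tau(i)\equiv \rat(i-1)\pmod n$, so that row $i$ of $R_{\rat,n}$ lies on runner $\tau(i)$ of the abacus. Counting gaps per row gives $(P_1,\ldots,P_n)=\tau^{-1}\cdot[\lvl(\mathsf{A}(\widetilde\Delta_{\waf_{\rat}}))-\lvl(\mathsf{A}(\widetilde\Delta_{\waf}))]$. For the labelling, writing $\waf=wt_{-\mu}$ and reducing $\waf$ modulo $n$ gives $\sigma=w\circ r^{M_{\waf}}\circ\tau$, where $r(i)\equiv i+1\pmod n$. Substituting these formulas into $\sigma\cdot(P_1,\ldots,P_n)$, applying $g^{-M_{\waf}}$ via Equation \ref{glvl} to pass from normalised to balanced abaci, and invoking Theorem \ref{abacuscoroot} to rewrite the two level vectors as $\waf_{\rat}^{-1}\cdot 0$ and $-\mu$ respectively, the expression telescopes to $w\cdot(\waf_{\rat}^{-1}\cdot 0-\mu)=wt_{-\mu}\waf_{\rat}^{-1}\cdot 0=\waf\waf_{\rat}^{-1}\cdot 0$, which is $\A(\waf)$.

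The main technical obstacle will be bookkeeping: the $g$-shifts, the $r$-shifts introduced by $M_{\waf}$, and the switch between balanced and normalised abaci must all be tracked simultaneously, and each intermediate equality holds only modulo $(1,1,\ldots,1)$. The base-case congruence derived from $\waf_{\rat}$ is what makes this bookkeeping consistent, so establishing it carefully via the coprimality of $\rat$ and $n$ is the crux of the argument; once it is in place, the remaining manipulations are purely formal.
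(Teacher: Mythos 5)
Your proposal is correct and follows essentially the same strategy as the paper's own proof: base case $\waf_\rat$ giving the zero parking function, the pivotal congruence $g\cdot\lvl(\mathsf{A}(\widetilde\Delta_{\waf_\rat}))\equiv\lvl(\mathsf{A}(\widetilde\Delta_{\waf_\rat}))$ derived via coprimality, the formulas $(P_1,\ldots,P_n)=\tau^{-1}\cdot[\lvl(\mathsf{A}(\widetilde\Delta_{\waf_\rat}))-\lvl(\mathsf{A}(\widetilde\Delta_{\waf}))]$ and $\sigma=w\circ r^{M_\waf}\circ\tau$, and the telescoping computation invoking Equation \ref{glvl} and Theorem \ref{abacuscoroot}. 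All the key ideas, including the recognition that the $\waf_\rat$-congruence is the crux enabling the bookkeeping across $g$-shifts, $r$-shifts, and normalisation, match the paper's proof.
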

\section{Dominant $\rat$-stable affine Weyl group elements}
We say that $\waf\in\wa$ is \defn{dominant} if $\waf\ac$ is contained in the dominant chamber.
In this section, we will study the set $\wratdom$ of dominant $\rat$-stable affine Weyl group elements.
The following lemma is well-known.
\begin{lemma}\label{grass}
 $\waf\in\wa$ is dominant if and only if for all $w\in W$ we have $\inv(\waf)\subseteq\inv(w\waf)$.
 \begin{proof}
  Suppose $\waf\in\wa$ is dominant. Then no hyperplane of the (linear) Coxeter arrangement separates $\waf\ac$ from $\ac$.
  So by Lemma \ref{arsep} $\inv(\waf^{-1})\cap\Phi^+=\emptyset$. Equivalently 
  \[\waf(\ar^+)\cap-\Phi^+=\emptyset.\]
  So if $\alpha+k\delta\in\inv(\waf)$, then $\waf(\alpha+k\delta)\in-\ar^+\backslash(-\Phi^+)$. Thus $w\waf(\alpha+k\delta)\in-\ar^+$ and $\alpha+k\delta\in\inv(w\waf)$.\\
  \\
  Conversely, suppose $\waf\in\wa$ and $\inv(\waf)\subseteq\inv(w\waf)$ for all $w\in W$. Then for $\alpha+k\delta\in\inv(\waf)$ we have $w\waf(\alpha+k\delta)\in-\ar^+$ for all $w\in W$, so $\waf(\alpha+k\delta)\in-\ar^+\backslash(-\Phi^+)$.
  Thus 
  \[\waf(\ar^+)\cap-\Phi^+=\emptyset,\]
  or equivalently $\inv(\waf^{-1})\cap\Phi^+=\emptyset$. So by Lemma \ref{arsep} no hyperplane of the (linear) Coxeter arrangement separates $\waf\ac$ from $\ac$ and thus $\waf\ac$ is dominant.
 \end{proof}

\end{lemma}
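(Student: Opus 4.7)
The plan is to translate the dominance condition into a statement about affine roots via Lemma \ref{arsep}, and then check that the resulting condition on inversions of $\waf$ is exactly what $W$-monotonicity of $\inv$ demands.

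First, I would record the reformulation: $\waf\ac$ lies in the dominant chamber if and only if no linear hyperplane $H_\alpha$ (with $\alpha\in\Phi^+$) separates $\waf\ac$ from $\ac$, which by Lemma \ref{arsep} applied to $\waf^{-1}$ is equivalent to $\Phi^+\cap\inv(\waf^{-1})=\emptyset$, i.e., $\waf(\ar^+)\cap(-\Phi^+)=\emptyset$. So ``$\waf$ dominant'' is the same as saying that no element of $\ar^+$ is sent by $\waf$ to a root of the form $\beta+0\cdot\delta$ with $\beta\in-\Phi^+$.

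Next I would make the key observation about the $W$-action on $\ar$: for $\gamma=\beta+l\delta\in-\ar^+$, the image $w(\gamma)=w(\beta)+l\delta$ stays in $-\ar^+$ for every $w\in W$ if and only if $l<0$, because if $l=0$ then $\beta\in-\Phi^+$ and we can pick $w$ with $w(\beta)\in\Phi^+$, pushing $w(\gamma)$ into $\ar^+$. Equivalently, the subset of $-\ar^+$ that is fixed setwise by $W$ is exactly $-\ar^+\setminus(-\Phi^+)$.

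With these two observations the equivalence falls out in one step in each direction. For the forward direction, if $\waf$ is dominant and $\alpha+k\delta\in\inv(\waf)$, then $\waf(\alpha+k\delta)\in-\ar^+$, and by the reformulation this image is not in $-\Phi^+$, so it lies in $-\ar^+\setminus(-\Phi^+)$, whence $w\waf(\alpha+k\delta)\in-\ar^+$ for all $w\in W$, giving $\inv(\waf)\subseteq\inv(w\waf)$. For the converse, assume the containment holds for all $w\in W$; then for every $\alpha+k\delta\in\inv(\waf)$ the image $\waf(\alpha+k\delta)$ must lie in $-\ar^+\setminus(-\Phi^+)$, and combined with the fact that elements of $\ar^+\setminus\inv(\waf)$ are sent to $\ar^+$ we conclude $\waf(\ar^+)\cap(-\Phi^+)=\emptyset$, so $\waf$ is dominant.

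The main obstacle, if there is one, is purely notational: tracking signs carefully in the decomposition $\ar^+=\{\alpha+k\delta:\alpha\in\Phi^+,k\geq 0\}\cup\{\alpha+k\delta:\alpha\in-\Phi^+,k>0\}$ and keeping straight when $l=0$ is the troublesome case. Once that combinatorial bookkeeping is done, both directions reduce to the $W$-stability observation about $-\ar^+\setminus(-\Phi^+)$ and an application of Lemma \ref{arsep}.
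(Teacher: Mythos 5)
Your proof is correct and follows essentially the same route as the paper: translate dominance via Lemma~\ref{arsep} into $\waf(\ar^+)\cap(-\Phi^+)=\emptyset$, then use that $-\ar^+\setminus(-\Phi^+)$ is exactly the $W$-stable part of $-\ar^+$. You state that $W$-stability observation a little more explicitly than the paper does, and you spell out the small point in the converse (that non-inversions of $\waf$ automatically land in $\ar^+$), but the argument is the same.
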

\begin{mdframed}[style=Example]
\textbf{Example.} If $\Phi$ is of type $A_{n-1}$, so that $\wa=\widetilde{S}_n$, then the dominant affine Weyl group elements $\waf$ are exactly those whose inverse is \defn{affine Grassmanian}, that is those that satisfy
\[\waf^{-1}(1)<\waf^{-1}(2)<\cdots<\waf^{-1}(n).\]
\end{mdframed}
\begin{lemma}\label{ratdom}
 If $\waf\in\wa^{\rat}$ and $w\in W$ such that $\waf\ac\subseteq wC$, then $w^{-1}\waf\in\wratdom$.
 \begin{proof}
  Clearly $w^{-1}\waf$ is dominant. By Lemma \ref{grass} we have $\inv(w^{-1}\waf)\subseteq\inv(\waf)$, so 
  \[\inv(w^{-1}\waf)\cap\ar_{\rat}=\emptyset,\]
  thus $w^{-1}\waf\in\wa^{\rat}$.
 \end{proof}

\end{lemma}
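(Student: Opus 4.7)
The plan is to unpack the two defining conditions of $\wratdom$ separately: dominance of $w^{-1}\waf$, and $\rat$-stability of $w^{-1}\waf$. Both will follow essentially formally from the hypotheses together with Lemma \ref{grass}.

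First I would establish dominance. Since $\waf\ac\subseteq wC$ by hypothesis, applying $w^{-1}$ (which permutes chambers and preserves the tessellation into alcoves) gives $w^{-1}\waf\ac\subseteq C$. By the definition of ``dominant'' given just before Lemma \ref{grass}, this means $w^{-1}\waf$ is dominant.

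Next I would verify $\rat$-stability. Here the key tool is Lemma \ref{grass}: since $w^{-1}\waf$ is dominant, it satisfies $\inv(w^{-1}\waf)\subseteq\inv(v\cdot w^{-1}\waf)$ for every $v\in W$. Specialising to $v=w$ yields the containment $\inv(w^{-1}\waf)\subseteq\inv(\waf)$. Now by hypothesis $\waf\in\wa^{\rat}$, so by the definition of $\rat$-stability in Section \ref{stable} we have $\inv(\waf)\cap\ar_{\rat}=\emptyset$. Combining these, $\inv(w^{-1}\waf)\cap\ar_{\rat}=\emptyset$, so $w^{-1}\waf\in\wa^{\rat}$.

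Together these give $w^{-1}\waf\in\wratdom$, as required. There is no real obstacle here: the statement is essentially a bookkeeping consequence of Lemma \ref{grass} together with the fact that inversions can only shrink under left multiplication by a Weyl group element that ``pulls'' the alcove back into the dominant chamber. The only subtle point worth stating carefully is the direction of the containment in Lemma \ref{grass}: one must apply it to the dominant element $w^{-1}\waf$ (not to $\waf$) and then choose the Weyl group factor $v=w$ so as to recover $\waf$ on the right-hand side.
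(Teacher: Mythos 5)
Your proof is correct and follows the same route as the paper: establish dominance directly from $\waf\ac\subseteq wC$, then use Lemma \ref{grass} (applied to the dominant element $w^{-1}\waf$ with the Weyl group factor $w$) to get $\inv(w^{-1}\waf)\subseteq\inv(\waf)$ and hence $\rat$-stability. The only difference is that you spell out the ``clearly'' and the choice of $v=w$ more explicitly, which is fine.
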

\noindent
%
%
\section{The $\m$-Shi arrangement}\label{shiarr}
In this section, we introduce the \defn{$\m$-Shi arrangement} of an irreducible crystallographic root system $\Phi$, a hyperplane arrangement in $V$ whose study forms a major part of Fuß-Catalan combinatorics.
There is a close link between this section and the previous ones: The regions of the $\m$-Shi arrangement can be indexed by their \defn{minimal alcoves}, which correspond to the $(mh+1)$-stable affine Weyl group elements, the Fuß-Catalan specialisation ($\rat=mh+1$) of the set $\wa^{\rat}$ considered in Section \ref{and}.
\subsection{The Shi arrangement}\label{shi}
The \defn{Shi arrangement} is the affine hyperplane arrangement given by all the hyperplanes $H_{\alpha}^d$ for $\alpha\in\Phi^+$ and $d=0,1$.
It was first introduced in \cite{shi87sign} and arose from the study of the Kazhdan-Lusztig cells of the affine Weyl group of type $\widetilde{A}_{n-1}$.
The complement of these hyperplanes, we call them \defn{Shi hyperplanes}, falls apart into connected components which we call the \defn{regions} of the Shi arrangement, or Shi regions for short.
We call a Shi region \defn{dominant} if it is contained in the dominant chamber.
\begin{figure}[h]
\begin{center}
 \resizebox*{8cm}{!}{\includegraphics{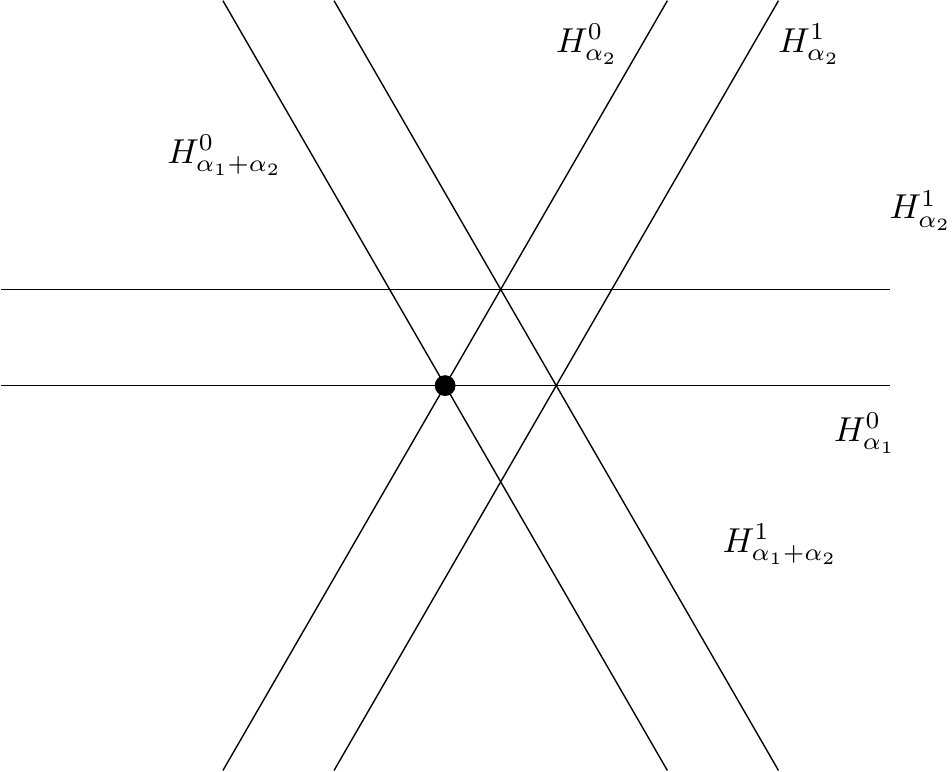}}\\
\end{center}
\caption{The Shi arrangement of the root system of type $A_2$. It has $16$ regions.}
\end{figure}
\\
An \defn{ideal} in the root poset is a subset $I\subseteq\Phi^+$ such that whenever $\alpha\in I$ and $\beta\leq\alpha$, then $\beta\in I$.
Dually, we define an \defn{order filter} as a subset $J\subseteq\Phi^+$ such that whenever $\alpha\in J$ and $\alpha\leq\beta$ then $\beta\in J$.
For a dominant Shi region $R$ define 
$$\phi(R):=\{\alpha\in\Phi^+:\langle x,\alpha\rangle>1\text{ for all }x\in R\}.$$
It is easy to see that $\phi(R)$ is an order filter in the root poset of $\Phi$.
In fact, $\phi$ even defines a bijection between the set of dominant Shi regions and the set of order filters in the root poset \cite[Theorem 1.4]{shi97number}.
\begin{figure}[h]
\begin{center}
 \resizebox*{7cm}{!}{\includegraphics{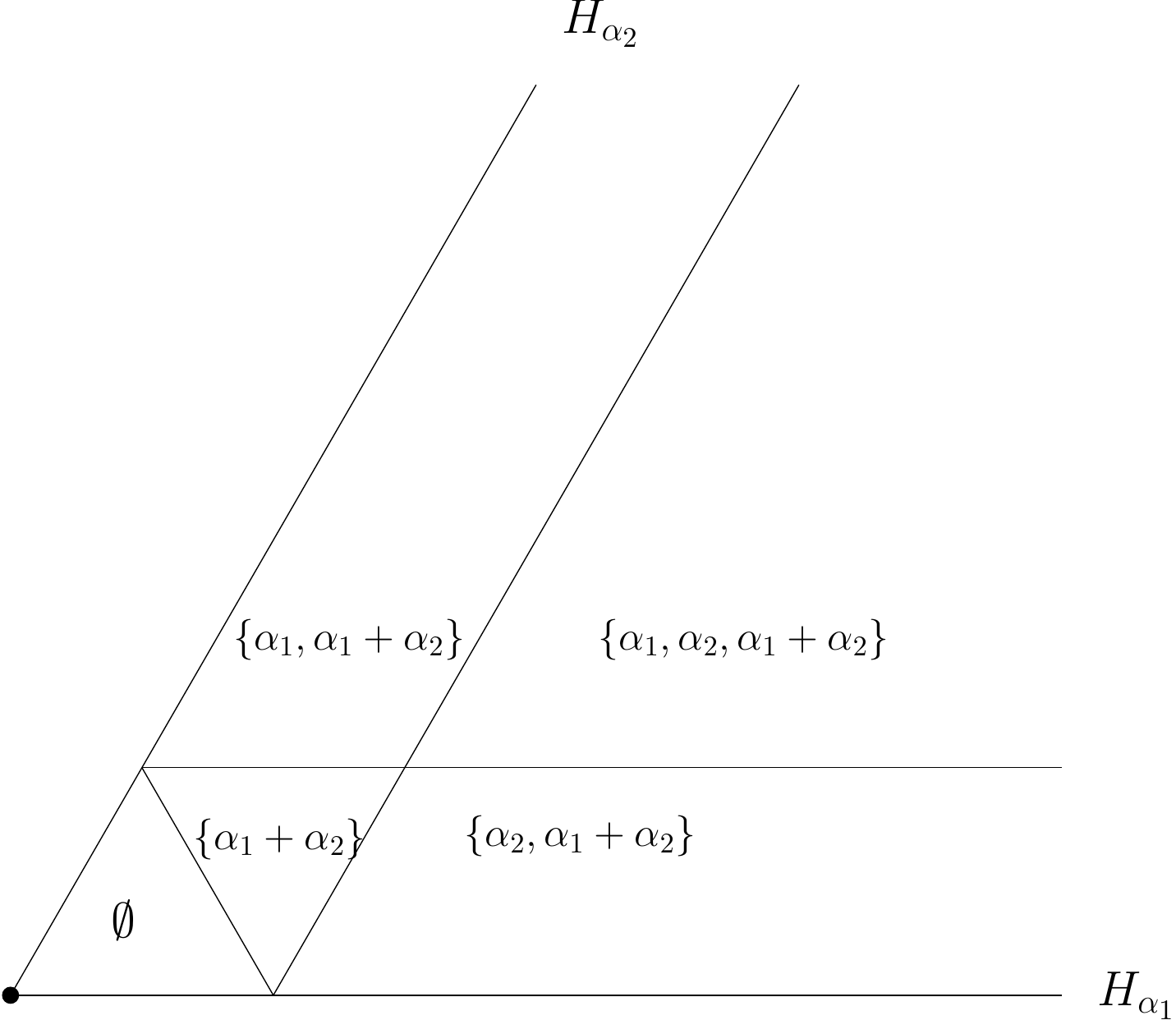}}\\
\end{center}
\caption{The $5$ dominant Shi regions of the root system of type $A_2$ together with their corresponding order filters in the root poset.}
\end{figure}
\subsection{The $\m$-extended Shi arrangement}\label{mshi}
For a positive integer $\m$, the \defn{$\m$-extended Shi arrangement}, or simply $\m$-Shi arrangement, is the affine hyperplane arrangement given by all the hyperplanes $H_{\alpha}^k$ for $\alpha\in\Phi^+$ and $-\m<k\leq \m$. We call them \defn{$\m$-Shi hyperplanes}.
The complement of these hyperplanes falls apart into connected components, which we call the \defn{regions} of the $\m$-Shi arrangement, or $\m$-Shi regions for short.
Notice that the $1$-Shi arrangement is exactly the Shi arrangement introduced in Section \ref{shi}.\\
\\
Following \cite{athanasiadis05refinement}, we will encode dominant $\m$-Shi regions by \defn{geometric chains of ideals} or equivalently \defn{geometric chains of order filters}.
Suppose $\I=(I_1,I_2,\ldots,I_{\m})$ is an ascending (multi)chain of $\m$ ideals in the root poset of $\Phi$, that is $I_1\subseteq I_2\subseteq\ldots\subseteq I_{\m}$.
Setting $J_i:=\Phi^+\backslash I_i$ for $i\in[\m]$ and $\J:=(J_1,J_2,\ldots,J_{\m})$ gives us the corresponding descending chain of order filters. That is, we have $J_1\supseteq J_2\supseteq\ldots\supseteq J_{\m}$.
The ascending chain of ideals $\I$ and the corresponding descending chain of order filters $\J$ are both called \defn{geometric} if the following conditions are satisfied simultaneously.
\begin{enumerate}
 \item $(I_i+I_j)\cap\Phi^+\subseteq I_{i+j}\text{ for all }i,j\in\{0,1,\ldots,\m\}\text{ with }i+j\leq \m\text{, and}$
 \item $(J_i+J_j)\cap\Phi^+\subseteq J_{i+j}\text{ for all }i,j\in\{0,1,\ldots,\m\}\text{.}$
\end{enumerate}
Here we set $I_0:=\varnothing$, $J_0:=\Phi^+$ and $J_i:=J_{\m}$ for $i>\m$.\\
\begin{mdframed}[style=Example]
\textbf{Example.} For example, the chain of $2$ order filters $\J=(\{\alpha_1+\alpha_2\},\{\alpha_1+\alpha_2\})$ in the root system of type $A_2$ is not geometric, since the corresponding chain of ideals $\I=(\{\alpha_1,\alpha_2\},\{\alpha_1,\alpha_2\})$ has $\alpha_1,\alpha_2\in I_1$, but $\alpha_1+\alpha_2\notin I_{1+1}=I_2$.
\end{mdframed}
If $R$ is a dominant $\m$-Shi region define $\theta(R):=(I_1,I_2,\ldots,I_{\m})$ and $\phi(R):=(J_1,J_2,\ldots,J_{\m})$, where
$$I_i:=\{\alpha\in\Phi^+\mid\langle x,\alpha\rangle<i\text{ for all }x\in R\}\text{ and}$$
$$J_i:=\{\alpha\in\Phi^+\mid\langle x,\alpha\rangle>i\text{ for all }x\in R\}$$
for $i\in\{0,1,\ldots,\m\}$.
It is not difficult to verify that $\theta(R)$ is a geometric chain of ideals and that $\phi(R)$ is the corresponding geometric chain of order filters.\\
\\
In fact $\theta$ is a bijection from dominant $\m$-Shi regions to geometric chains of ideals. Equivalently $\phi$ is a bijection from dominant $\m$-Shi regions to geometric chains of order filters \cite[Theorem 3.6]{athanasiadis05refinement}.

\section{Minimal alcoves of $\m$-Shi regions}\label{minsec}
Any alcove of the affine Coxeter arrangement is contained in a unique $\m$-Shi region.
We will soon see that for any $\m$-Shi region $R$ there is a unique alcove $\waf_R\ac\subseteq R$
such that for all $\waf\ac\subseteq R$ and all $\alpha\in\Phi^+$ we have 
$$|k(\waf_R,\alpha)|\leq |k(\waf,\alpha)|.$$
We call $\waf_R\ac$ the \defn{minimal alcove} of $R$. We say that an alcove $\waf\ac$ is an \defn{$\m$-Shi alcove} if it is the minimal alcove of the $\m$-Shi region containing it.
We define $\alc=\{\waf_R\ac:R\text{ is an $\m$-Shi region}\}$ to be the set of $\m$-Shi alcoves.
\begin{figure}[h]
\begin{center}
 \resizebox*{9cm}{!}{\includegraphics{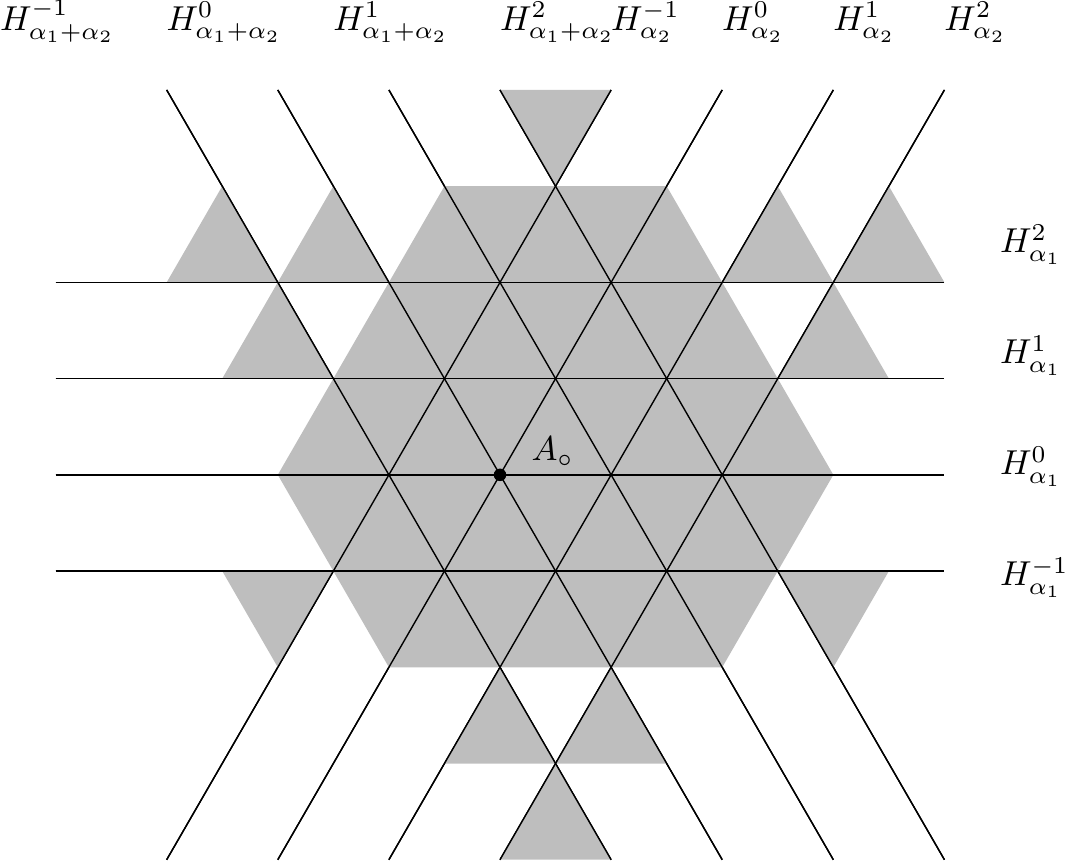}}\\
\end{center}
\caption{The $49$ minimal alcoves of the $2$-Shi arrangement of type $A_2$.}
\end{figure}
\subsection{The address of a dominant $\m$-Shi alcove}\label{add}
We first concentrate on dominant $\m$-Shi regions and their minimal alcoves.
The following lemma from \cite[Theorem 5.2]{shi87alcoves} gives necessary and sufficient conditions for a tuple $(k_{\alpha})_{\alpha\in\Phi^+}$ to be the address of some alcove $\waf\ac$.
\begin{lemma}[\protect{\cite[Lemma 2.3]{athanasiadis05refinement}}]\label{address}
 Suppose that for each $\alpha\in\Phi^+$ we are given some integer $k_{\alpha}$.
 Then there exists $\waf\in\wa$ with $k(\waf,\alpha)=k_{\alpha}$ for all $\alpha\in\Phi^+$ if and only if
 $$k_{\alpha}+k_{\beta}\leq k_{\alpha+\beta}\leq k_{\alpha}+k_{\beta}+1$$
 for all $\alpha,\beta\in\Phi^+$ with $\alpha+\beta\in\Phi^+$.
\end{lemma}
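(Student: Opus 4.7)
The forward direction is immediate. If $\waf\in\wa$ has $k(\waf,\alpha)=k_\alpha$ for every $\alpha\in\Phi^+$, then picking any $x\in\waf\ac$ we have $k_\gamma<\langle x,\gamma\rangle<k_\gamma+1$ for each positive root $\gamma$. For $\alpha,\beta\in\Phi^+$ with $\alpha+\beta\in\Phi^+$, adding the inequalities for $\alpha$ and $\beta$ and using the linearity $\langle x,\alpha+\beta\rangle=\langle x,\alpha\rangle+\langle x,\beta\rangle$ yields $k_\alpha+k_\beta<\langle x,\alpha+\beta\rangle<k_\alpha+k_\beta+2$; comparing with $k_{\alpha+\beta}<\langle x,\alpha+\beta\rangle<k_{\alpha+\beta}+1$ and using that all $k$'s are integers gives $k_\alpha+k_\beta\leq k_{\alpha+\beta}\leq k_\alpha+k_\beta+1$.

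For the reverse direction, given a tuple $(k_\alpha)$ satisfying the compatibility inequalities, I would reduce the problem to showing that the open polyhedron
\[ P:=\{x\in V:k_\alpha<\langle x,\alpha\rangle<k_\alpha+1\text{ for all }\alpha\in\Phi^+\} \]
is nonempty. Once this is established, $P$ is an open convex (hence connected) set that meets no hyperplane $H_\alpha^k$ of the affine Coxeter arrangement, so $P$ lies inside a single alcove $\waf\ac$; by construction $k(\waf,\alpha)=k_\alpha$ for every $\alpha\in\Phi^+$, and the observation recorded just before ``Polyhedra'' in the excerpt (that addresses determine alcoves) gives uniqueness.

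The nonemptiness of $P$ I would prove by induction on $N(k):=\sum_{\alpha\in\Phi^+}|k_\alpha|$. The base case $N(k)=0$ forces $k_\alpha=0$ for all $\alpha$, and then $P=\ac\neq\emptyset$. For the inductive step, the plan is to exhibit a simple affine reflection $s\in\widetilde S$ whose action on tuples — namely, $k'_\alpha$ is defined so that if $(k_\alpha)$ were the address of an alcove $\waf\ac$, then $(k'_\alpha)$ would be the address of $s\waf\ac$ — still satisfies the compatibility conditions and has strictly smaller $N$-value. Candidates for $s$ are $s_{\alpha_i}$ when some $k_{\alpha_i}<0$, and $s_\hr^1$ when $k_\hr\geq 1$; at least one of these situations must occur whenever $N(k)>0$, because on $\overline{\ac}$ every $\langle\cdot,\alpha_i\rangle\geq 0$ and $\langle\cdot,\hr\rangle\leq 1$, so a tuple with all $k_{\alpha_i}\geq 0$ and $k_\hr\leq 0$ would be compatible only with the fundamental alcove. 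Applying $s$ then reduces $N$; by induction there is $\waf'$ with address $(k'_\alpha)$, and $\waf:=s\waf'$ is the required element.

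\textbf{Main obstacle.} The technical heart is the inductive step: one must verify that the transformed tuple $(k'_\alpha)$ still satisfies the compatibility inequalities of the lemma, and that $N(k')<N(k)$ strictly. This amounts to a bookkeeping argument tracking how a simple reflection permutes $\Phi^+$ while shifting exactly one coordinate (the one associated with the wall being crossed), combined with the fact that the compatibility relations are invariant under the $\wa$-action on addresses since they themselves come from the additive structure of the root system, which $W$ preserves. The only subtlety is the affine reflection $s_\hr^1$, which shifts $k_\hr$ by $-1$ after the reflection, and for which the compatibility checks involve $-\hr+\delta$ and must be done with care using the equality $\h(-\hr+\delta)=1$.
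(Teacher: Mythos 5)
The paper does not give its own proof of this lemma; it cites it to Athanasiadis (who in turn attributes it to Shi). So I can only assess your proposal on its own terms, and in outline it is the standard argument: the forward direction is the trivial addition of alcove inequalities, and the backward direction is an induction on $N(k)=\sum_{\alpha\in\Phi^+}|k_\alpha|$ that reduces a compatible tuple to the zero tuple by applying simple affine reflections. This is sound, and once $P\neq\emptyset$ is known, the passage from $P$ to the alcove $\waf\ac$ containing it is exactly as you say. Two places in the sketch deserve sharpening before the argument could be called complete.

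First, the claim that $N(k)>0$ forces either some $k_{\alpha_i}<0$ or $k_\hr\geq 1$ should be argued arithmetically on the tuple, not geometrically via $\overline{\ac}$; you are not yet entitled to invoke alcoves here. The clean version: if all $k_{\alpha_i}\geq 0$ then, chaining covers in the root poset and applying $k_{\alpha+\beta}\geq k_\alpha+k_\beta$, one gets $k_\alpha\geq 0$ for every $\alpha\in\Phi^+$; if moreover $k_\hr\leq 0$ then $k_\hr=0$, and any $\gamma$ with $k_\gamma>0$ would propagate up a chain to give $k_\hr>0$, a contradiction. Hence $k\equiv 0$ and $N(k)=0$.

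Second, the $s_\hr^1$ case is more than a one-coordinate shift. Writing $s_\hr^1=t_{\hr^\vee}s_\hr$ and using $k(w\waf,w(\alpha))=k(\waf,\alpha)$ together with $k(t_\mu\waf,\alpha)=k(\waf,\alpha)+\langle\mu,\alpha\rangle$ and $k(\waf,-\alpha)=-k(\waf,\alpha)-1$, one finds
\[
k'_\hr \;=\; 1-k_\hr,\qquad k'_\alpha \;=\; k_\alpha\ \text{ if }\langle\alpha,\hr^\vee\rangle=0,\qquad k'_\alpha \;=\; -k_{\hr-\alpha}\ \text{ if }\langle\alpha,\hr^\vee\rangle=1.
\]
So a whole block of coordinates is permuted and negated, not just $k_\hr$. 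The drop $N(k')=N(k)-1$ still holds because $\alpha\mapsto\hr-\alpha$ is an involution on $\{\alpha\in\Phi^+:\langle\alpha,\hr^\vee\rangle=1\}$, so that block contributes the same total, while $|1-k_\hr|=k_\hr-1$ when $k_\hr\geq 1$. Preservation of the compatibility inequalities then splits into the sub-cases $\alpha+\beta=\hr$, all three roots orthogonal to $\hr$, and one of $\alpha,\beta$ orthogonal to $\hr$; each reduces, after the substitution above, to an instance of the original inequalities applied to $\hr-\alpha$, $\hr-\beta$, $\hr-\alpha-\beta$. Once these checks are written out (and the analogous, easier ones for $s_{\alpha_i}$ with $k_{\alpha_i}<0$, where $s_{\alpha_i}$ genuinely permutes $\Phi^+\setminus\{\alpha_i\}$ and sends $k_{\alpha_i}\mapsto -k_{\alpha_i}-1$), the induction closes and the argument is complete.
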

\noindent
For a geometric chain of order filters $\J=(J_1,J_2,\ldots,J_{\m})$ and $\alpha\in\Phi^+$, define
$$k_{\alpha}(\J)=\text{max}\{k_1+k_2+\ldots+k_l:\alpha=\alpha_1+\alpha_2+\ldots+\alpha_l\text{ and }\alpha_i\in J_{k_i}\text{ for all }i\in[l]\}$$
where $k_i\in\{0,1,\ldots,\m\}$ for all $i\in[l]$.\\
\\
It turns out that the integer tuple $(k_{\alpha}(\J))_{\alpha\in\Phi^+}$ satisfies the conditions of Lemma~\ref{address} \cite[Corollary 3.4]{athanasiadis05refinement}, 
so there is a unique $\waf\in\wa$ with 
$$k(\waf,\alpha)=k_{\alpha}(\J)\text{ for all }\alpha\in\Phi^+.$$
The alcove $\waf\ac$ is exactly the minimal alcove $\waf_R\ac$ of the dominant $\m$-Shi region $R:=\phi^{-1}(\J)$ corresponding to $\J$ \cite[Proposition 3.7]{athanasiadis05refinement}.
\subsection{Floors of dominant $\m$-Shi regions and alcoves}\label{fldomshialc}
The floors of a dominant $\m$-Shi region $R$ can be seen in the corresponding geometric chain of order filters $\J:=\phi(R)$ as follows.
If $k$ is a positive integer, a root $\alpha\in\Phi^+$ is called a \defn{rank $k$ indecomposable element} of a geometric chain of order filters $\J=(J_1,J_2,\ldots,J_{\m})$ if the following hold:
\begin{enumerate}
 \item $k_{\alpha}(\J)=k$,
 \item $\alpha\notin J_i+J_j\text{ for any }i,j\in\{0,1,\ldots,m\}\text{ with }i+j=k$ and
 \item if $\alpha+\beta\in J_t$ and $k_{\alpha+\beta}(J)=t\leq m$ for some $\beta\in\Phi^+$, then $\beta\in J_{t-k}$.
\end{enumerate}
The following theorem relates the indecomposable elements of $\J$ to the floors of $R$ and $\waf_R\ac$.
\begin{theorem}[\protect{\cite[Theorem 3.11]{athanasiadis05refinement}}]\label{indfloor}
 If $R$ is a dominant $\m$-Shi region, $\J=\phi(R)$ is the corresponding geometric chain of order filters and $\alpha\in\Phi^+$, then the following are equivalent:
 \begin{enumerate}
 \item $\alpha$ is a rank $k$ indecomposable element of $\J$,
 \item $H_{\alpha}^k$ is a floor of $R$, and
 \item $H_{\alpha}^k$ is a floor of $\waf_R\ac$.
\end{enumerate}
\end{theorem}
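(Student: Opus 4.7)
The plan is to prove the chain of equivalences $(1)\Leftrightarrow (3)\Leftrightarrow (2)$ by combining the explicit address formula $k(\waf_R,\alpha)=k_\alpha(\J)$ recorded in Section \ref{add} with the characterization of alcove walls provided by Lemma \ref{arfl}, and then using the minimality of $\waf_R\ac$ inside $R$ to transfer information between the alcove and its region.

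For $(1)\Leftrightarrow (3)$: a hyperplane $H_\alpha^k$ with $\alpha\in\Phi^+$ and $k\geq 1$ is a floor of $\waf_R\ac$ iff $k(\waf_R,\alpha)=k$ and the lower inequality $\langle x,\alpha\rangle\geq k$ is a non-redundant defining inequality of the simplex $\waf_R\ac$. Via $k(\waf_R,\alpha)=k_\alpha(\J)$, the equality $k(\waf_R,\alpha)=k$ matches condition (i) of rank $k$ indecomposability. For the non-redundancy, I would argue that by Lemma \ref{address} any decomposition $\alpha=\beta+\gamma$ in $\Phi^+$ forces $k_\alpha(\J)\in\{k_\beta(\J)+k_\gamma(\J),\,k_\beta(\J)+k_\gamma(\J)+1\}$; a pair $(\beta,\gamma)$ renders the inequality for $\alpha$ redundant precisely when $k_\alpha(\J)=k_\beta(\J)+k_\gamma(\J)$, i.e.\ when $\alpha\in J_i+J_j$ with $i+j=k$, which is exactly condition (ii). Condition (iii), concerning the behaviour of $\alpha$ as a summand of some larger root $\alpha+\beta\in J_t$, handles a subtler form of redundancy that arises from chains of decompositions inside the filter structure, and is what ensures that $H_\alpha^k$ is a genuine facet of the simplex $\waf_R\ac$ rather than being cut off by walls coming from other positive roots.

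For $(3)\Leftrightarrow (2)$: if $H_\alpha^k$ is a floor of $\waf_R\ac$, then reflecting $\waf_R\ac$ across $H_\alpha^k$ produces an alcove with strictly smaller $|k(\cdot,\alpha)|$. By minimality of $\waf_R\ac$ in $R$, that reflected alcove cannot lie in $R$, so the reflection must cross a wall of $R$; hence $H_\alpha^k$ is a wall of $R$, and since it already separates $\waf_R\ac$ from the origin it separates $R$ from the origin, making it a floor of $R$. Here one must check that $k\leq m$ so that $H_\alpha^k$ is genuinely an $\m$-Shi hyperplane; this is a consequence of $(1)$, since indecomposability forces $k_\alpha(\J)\leq m$ via the filter $J_m$ being the deepest available. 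Conversely, if $H_\alpha^k$ is a floor of $R$, then $\waf_R\ac\subseteq R$ gives $k(\waf_R,\alpha)\geq k$, and minimality of $\waf_R\ac$ forces equality together with actual adjacency to $H_\alpha^k$, so $H_\alpha^k$ is a floor of $\waf_R\ac$.

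The main obstacle is the technical role of condition (iii): while (i) and (ii) correspond transparently to an address-level equality and a two-term non-redundancy, condition (iii) encodes a compatibility of $\alpha$ with the filter structure under extension by a further positive root, and this is what is needed both to rule out longer-chain redundancies of $H_\alpha^k$ as a wall of $\waf_R\ac$ and to guarantee the bound $k\leq m$ needed to identify floors of the alcove with $\m$-Shi hyperplanes. Establishing this equivalence carefully in both directions requires exploiting the full subadditivity and near-superadditivity of $\alpha\mapsto k_\alpha(\J)$ coming from Lemma \ref{address}, together with a careful analysis of when those inequalities are tight within a geometric chain.
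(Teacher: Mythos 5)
This statement is cited from \cite[Theorem 3.11]{athanasiadis05refinement}; the paper does not supply its own proof, so there is no internal argument to compare against, and your proposal has to stand on its own.

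Your outline for $(3)\Leftrightarrow(2)$ is essentially sound: from $(3)$, the reflected alcove $s_\alpha^k\waf_R\ac$ has strictly smaller $|k(\cdot,\alpha)|$, so minimality forces it outside $R$, and the shared facet on $H_\alpha^k$ makes it a wall (hence floor) of $R$ once you know $k\leq m$. Your remark that $k\leq m$ follows from (i)+(ii) is correct, though it deserves an explicit argument: if $k_\alpha(\J)=k>m$ is witnessed by a decomposition $\alpha=\alpha_1+\cdots+\alpha_l$ with $\alpha_i\in J_{k_i}$, then $l\geq 2$ (since each $k_i\leq m$), and grouping $\beta=\alpha_1+\cdots+\alpha_{l-1}$, $\gamma=\alpha_l$ and using geometricity puts $\alpha\in J_i+J_j$ with $i+j=k$, contradicting (ii). For $(2)\Rightarrow(3)$ the step I would flag is ``minimality forces equality together with actual adjacency to $H_\alpha^k$.'' Minimality plus the address formula does give $k(\waf_R,\alpha)=k$, but having the right address does not by itself mean $H_\alpha^k$ is one of the $r+1$ walls of the simplex $\waf_R\ac$. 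You still need to rule out the possibility that the defining inequality $\langle x,\alpha\rangle>k$ is made redundant by the other inequalities $\langle x,\beta\rangle>k(\waf_R,\beta)$; this is precisely where Lemma \ref{address} and the additivity of addresses across decompositions $\alpha=\beta+\gamma$ must be invoked, and you do not actually carry that out.

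The larger gap is $(1)\Leftrightarrow(3)$. Matching (i) to $k(\waf_R,\alpha)=k_\alpha(\J)$ is fine, and your observation that $\alpha\in J_i+J_j$ with $i+j=k$ yields a two-term additive redundancy $k_\alpha=k_\beta+k_\gamma$ is the right intuition for (ii). But the heart of the proof is showing that the pair (ii)+(iii) is exactly equivalent to irredundancy of the inequality $\langle x,\alpha\rangle>k$ among \emph{all} the address inequalities of $\waf_R\ac$ — not just those coming from two-term decompositions of $\alpha$. In particular (iii) involves roots $\alpha+\beta$ \emph{above} $\alpha$, which contribute upper-bound inequalities $\langle x,\alpha+\beta\rangle<k_{\alpha+\beta}(\J)+1$; ruling out redundancy of $\langle x,\alpha\rangle>k$ via the upper bound on $\alpha+\beta$ combined with a lower bound on $\beta$ is a genuinely different mechanism from (ii), and your proposal acknowledges this only at the level of narrative (``handles a subtler form of redundancy,'' ``requires a careful analysis of when those inequalities are tight''). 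As written, this is a plan rather than a proof: the one implication that would actually establish the theorem — that conditions (ii) and (iii) together characterize irredundancy of the floor inequality — is stated but not demonstrated. You would need either to work through the redundancy analysis for the simplex $\waf_R\ac$ explicitly, or to use the affine-root characterization of floors from Lemma \ref{arfl} (that $H_\alpha^k$ is a floor of $\waf_R\ac$ iff $\waf_R^{-1}(-\alpha+k\delta)\in-\widetilde{\simp}$) and translate that condition into (i)--(iii) via the formula $k(\waf_R,\cdot)=k_\cdot(\J)$, which is closer to how Athanasiadis argues.
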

\subsection{$\m$-Shi regions and alcoves in other chambers}
The following easy lemma, generalising \cite[Lemma 10.2]{armstrong12parking}, describes what the $\m$-Shi arrangement looks like in each chamber.
\begin{lemma}\label{cham}
 For $w\in W$, the $\m$-Shi hyperplanes that intersect the chamber $wC$ are exactly those of the form $H_{w(\alpha)}^k$ where $\alpha\in\Phi^+$ and either $1\leq k<m$ or $k=m$ and $w(\alpha)\in\Phi^+$.
 \begin{proof}
  If an $\m$-Shi hyperplane $H_{\beta}^k$ with $\beta\in\Phi$ and $1\leq k\leq m$ intersects $wC$, then there is some $x\in wC$ with $\langle x,\beta\rangle=k$.
  So $w^{-1}(x)\in C$ and $\langle w^{-1}(x),w^{-1}(\beta)\rangle=k>0$, thus $\alpha:=w^{-1}(\beta)\in\Phi^+$.
  If $k=m$, then $\beta=w(\alpha)\in\Phi^+$ since otherwise $H_{\beta}^k$ is not an $\m$-Shi hyperplane.\\
  \\
  Conversely, if $\alpha\in\Phi^+$ and either $1\leq k<m$ or $k=m$ and $w(\alpha)\in\Phi^+$, then $H_{w(\alpha)}^k$ is an $\m$-Shi hyperplane.
  Take $x\in C$ with $\langle x,\alpha\rangle=k$. Then $w(x)\in wC$ and $\langle w(x),w(\alpha)\rangle=k$, so $H_{w(\alpha)}^k$ intersects $wC$.
 \end{proof}

\end{lemma}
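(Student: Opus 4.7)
The plan is to transport the problem to the dominant chamber $C$ via the action of $w^{-1}$, exploiting the defining property of $C$ that $\langle y,\gamma\rangle>0$ for $y\in C$ if and only if $\gamma\in\Phi^+$. Both directions then reduce to straightforward bookkeeping once we convert back and forth between the two parametrizations $H_{\beta}^{j}$ (with $\beta\in\Phi^+$ and $-m<j\leq m$) and $H_{w(\alpha)}^{k}$ (with $\alpha\in\Phi^+$ and $k$ as in the statement).

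For the forward direction, I would take an $\m$-Shi hyperplane $H_{\beta}^{j}$ with $\beta\in\Phi^+$ and $-m<j\leq m$ that meets $wC$, pick $x$ in the intersection, and compute $\langle w^{-1}(x),w^{-1}(\beta)\rangle=j$. Since $w^{-1}(x)\in C$, the sign of $j$ agrees with the sign of $w^{-1}(\beta)$. If $w^{-1}(\beta)\in\Phi^+$, I set $\alpha=w^{-1}(\beta)$ and $k=j$, giving $1\leq k\leq m$, with the boundary case $k=m$ automatically satisfying $w(\alpha)=\beta\in\Phi^+$. If instead $w^{-1}(\beta)\in-\Phi^+$, then $j<0$ and I rewrite $H_{\beta}^{j}=H_{-\beta}^{-j}=H_{w(\alpha)}^{k}$ where $\alpha=-w^{-1}(\beta)\in\Phi^+$ and $k=-j$; the original constraint $-m<j$ becomes $k<m$, precisely matching the statement.

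For the converse, given $\alpha\in\Phi^+$ and an admissible $k$, I would first verify that $H_{w(\alpha)}^{k}$ is genuinely an $\m$-Shi hyperplane by rewriting it in the canonical form $H_{\gamma}^{j}$ with $\gamma\in\Phi^+$ (using $H_{-\gamma}^{-j}=H_{\gamma}^{j}$ when $w(\alpha)\in-\Phi^+$) and checking $-m<j\leq m$. The asymmetry of the lemma's conditions (allowing $k=m$ only when $w(\alpha)\in\Phi^+$) emerges naturally here, because a sign flip turns $k=m$ into $j=-m$, which is not an admissible height. To produce an actual intersection point, I pick $x\in C$ with $\langle x,\alpha\rangle=k$ — which exists since $\alpha\in\Phi^+$ and $k>0$, so the hyperplane $H_{\alpha}^{k}$ genuinely crosses the open chamber $C$ — and observe $w(x)\in wC$ with $\langle w(x),w(\alpha)\rangle=k$.

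I do not anticipate any serious obstacle: the content is essentially the transitivity of the $W$-action on chambers together with the observation that signs of inner products against a root are constant on an open chamber. The only mild nuisance is the case analysis distinguishing $w(\alpha)\in\Phi^+$ from $w(\alpha)\in-\Phi^+$ in order to reconcile the two parametrizations, and this is exactly what produces the asymmetric boundary condition at $k=m$.
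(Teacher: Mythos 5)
Your proof is correct and follows essentially the same route as the paper: transport to the dominant chamber via $w^{-1}$, use the sign of the inner product against an open chamber to identify $\alpha=w^{-1}(\beta)$ (or its negative) as positive, and track the boundary constraint at level $m$ through the reparametrization. The only cosmetic difference is that you start from the parametrization $H_{\beta}^{j}$ with $\beta\in\Phi^+$ and $-m<j\leq m$ and flip signs as needed, whereas the paper works directly with $H_{\beta}^{k}$ for $\beta\in\Phi$ and $1\leq k\leq m$; both normalizations handle the $k=m$ edge case identically.
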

\noindent
We are now ready for our first main theorem about minimal alcoves of $\m$-Shi regions, which we will use frequently and without mention. It is already known for dominant regions \cite[Proposition 3.7, Theorem 3.11]{athanasiadis05refinement}.
\begin{theorem}\label{min}
 Every region $R$ of the $\m$-Shi arrangement contains a unique minimal alcove $\waf_R\ac$.
 That is, for any $\alpha\in\Phi^+$ and $\waf\in\wa$ such that $\waf A_{\circ}\subseteq R$, we have $|k(\waf_R,\alpha)|\leq |k(\waf,\alpha)|$.
 The floors of $\waf_RA_{\circ}$ are exactly the floors of $R$. 
\begin{proof}
 The concept of the proof is as follows. Start with an $m$-Shi region $R$ contained in the chamber $wC$. Consider $\rd:=w^{-1}R\subseteq C$. This is not in general an $m$-Shi region, but it contains a unique $m$-Shi region $\rmin$ that is closest to the origin. We take its minimal alcove $\wm\ac$ and find that $w\wm\ac$ is the minimal alcove of $R$.\\
 \\
 Suppose $R$ is an $m$-Shi region contained in the chamber $wC$.
 Let $\rd:=w^{-1}R\subseteq C$. Notice that $\rd$ need not itself be an $\m$-Shi region. By Lemma \ref{cham}, the walls of $R$ are of the form $H_{w(\alpha)}^k$ where $\alpha\in\Phi^+$ and either $0\leq k<m$ or $k=m$ and $w(\alpha)\in\Phi^+$. Thus the walls of $\rd=w^{-1}R$ are of the form $H_{\alpha}^k$ with $\alpha\in\Phi^+$ and either $0\leq k< m$ or $k=m$ and $w(\alpha)\in\Phi^+$.
 In particular, they are $m$-Shi hyperplanes. The only $m$-Shi hyperplanes $H$ that may intersect $\rd$ are those such that $w(H)$ is not an $\m$-Shi hyperplane, that is those of the form $H_{\alpha}^m$ with $w(\alpha)\in-\Phi^+$.\\
 \\
 Now suppose $\re$ is a dominant $\m$-Shi region and $\J'=\phi(R')$ is the corresponding geometric chain of order filters.
 Then $\re\subseteq\rd$ if and only if for every $\m$-Shi hyperplane $H_{\alpha}^k$, whenever all of $\rd$ is on one side of $H_{\alpha}^k$, then all of $\re$ is on the same side of it.
 Equivalently, $\re\subseteq\rd$ 
 precisely when for all $1\leq k\leq m$ and $\alpha\in\Phi^+$ we have $\alpha\in J_k'$ if $\langle x,\alpha\rangle>k$ for all $x\in \rd$, and $\alpha\in I_k'$ if $\langle x,\alpha\rangle<k$ for all $x\in \rd$.\\
 \\
 Let $\J=(J_1,J_2,\ldots,J_{\m})$ be the chain of order filters with $\alpha\in J_k$ if and only if $\langle x,\alpha\rangle>k$ for all $x\in \rd$.
 To see that $\J$ is geometric, first note that if $\alpha\in J_i$, $\beta\in J_j$ and $\alpha+\beta\in\Phi^+$, then $\langle x,\alpha+\beta\rangle=\langle x,\alpha\rangle+\langle x,\beta\rangle>i+j$ for all $x\in \rd$, so $\alpha+\beta\in J_{i+j}$.
 Let $\re$ be some $\m$-Shi region contained in $\rd$ and let $\J'=\phi(\re)$ be the corresponding geometric chain of order filters.
 Then $\re$ and $\rd$ are on the same side of every $\m$-Shi hyperplane that does not intersect $\rd$, so in particular $J_k=J_k'$ for $1\leq k<\m$.
 Whenever $\alpha\in J_{\m}$, then $\langle x,\alpha\rangle>m$ for all $x\in\rd$, so $\alpha\in J_{\m}'$. Thus $J_{\m}\subseteq J_{\m}'$.
 If $i+j\leq \m$, assume without loss of generality that $i,j>0$, so that $i,j<\m$ and
 $$(I_i+I_j)\cap\Phi^+=(I'_i+I'_j)\cap\Phi^+\subseteq I'_{i+j}\subseteq I_{i+j},$$
 since $\J'$ is geometric. This shows that $\J$ is geometric.
 Thus there is a dominant region $\rmin=\phi^{-1}(\J)$.
 We clearly have $\alpha\in J_k$ if $\langle x,\alpha\rangle>k$ for all $x\in \rd$, and whenever $\langle x,\alpha\rangle<k$ for all $x\in \rd$, then $\alpha\in I'_k\subseteq I_k$.
 Thus $\rmin\subseteq\rd$.
 Observe that $k_{\alpha}(\J)\leq k_{\alpha}(\J')$ for all $\alpha\in\Phi^+$. Also note that $\langle x,\alpha\rangle>k_{\alpha}(\J)$ for all $x\in \rd$.\\
 \\
 Let $\waf_{\rm{min}}A_{\circ}$ be the minimal alcove of $\rmin$ \cite{athanasiadis05refinement}. Thus we have $k(\waf_{\rm{min}},\alpha)=k_{\alpha}(\J)$ for all $\alpha\in\Phi^+$. So if $\waf A_{\circ}$ is any alcove contained in $\rd$, say $\waf A_{\circ}\subseteq\re$ for some $\m$-Shi region $\re\subseteq\rd$, then if $\J'=\phi(\re)$ we have $k(\waf_{\rm{min}},\alpha)=k_{\alpha}(\J)\leq k_{\alpha}(\J')\leq k(\waf,\alpha)$ for all $\alpha\in\Phi^+$.\\
 \\
 So if we define $\waf_R:=w\wm$, then $\waf_RA_{\circ}\subseteq R$ and $k(\waf_R,\alpha)=k(\wm,w^{-1}(\alpha))$ for all $\alpha\in\Phi$.
 If $\waf A_{\circ}$ is any alcove contained in $R$, $\alpha\in\Phi^+$ and $w^{-1}(\alpha)\in\Phi^+$, then $$k(\waf,\alpha)=k(w^{-1}\waf,w^{-1}(\alpha))\geq k(\wm,w^{-1}(\alpha))=k(\waf_R,\alpha)\text{,}$$ since $w^{-1}\waf A_{\circ}\subseteq \rd$. Note that in this case $k(\waf_R,\alpha)=k(\wm,w^{-1}(\alpha))\geq0$, since $w^{-1}(\alpha)\in\Phi^+$ and $\wm\ac$ is dominant.
 If instead $w^{-1}(\alpha)\in-\Phi^+$, then 
 \begin{multline*}
 k(\waf,\alpha)=k(w^{-1}\waf,w^{-1}(\alpha))=-k(w^{-1}\waf,-w^{-1}(\alpha))-1\\
 \leq -k(\wm,-w^{-1}(\alpha))-1=k(\wm,w^{-1}(\alpha))=k(\waf_R,\alpha)\text{.} 
 \end{multline*} 
 Note that in this case, $k(\waf_R,\alpha)=-k(\wm,-w^{-1}(\alpha))-1<0$. So either way we have $|k(\waf_R,\alpha)|\leq |k(\waf,\alpha)|$.\\
 \\
 Suppose $H_{\alpha}^k$ is a floor of $\waf_R\ac$. Then it is the only hyperplane separating $s_{\alpha}^k\waf_RA_{\circ}$ from $\waf_RA_{\circ}$. Thus $k(s_{\alpha}^k\waf_R,\beta)=k(\waf_R,\beta)$ for all $\beta\neq\pm\alpha$ and $|k(s_{\alpha}^k\waf_R,\alpha)|=|k(\waf_R,\alpha)|-1$. Since $\waf_R\ac$ is the minimal alcove of $R$ this implies that $s_{\alpha}^k\waf_RA_{\circ}$ is not contained in $R$. Thus $H_{\alpha}^k$ must be an $m$-Shi hyperplane, and therefore a floor of $R$.\\
 \\
 Suppose $H_{\alpha}^k$ is a floor of $\rd$, where $\alpha\in\Phi^+$. Then we claim that $\alpha$ is a rank $k$ indecomposable element of $\J=\phi(\rmin)$.
 To see this, first note that $\langle x,\alpha\rangle>k$ for all $x\in\rd$, so $\alpha\in J_k$. Also, $\langle x,\alpha\rangle<k+1$ for some $x\in\rd$, so $k_{\alpha}(\J)=k$.
 Suppose $\alpha=\beta+\gamma$ with $\beta\in J_i$ and $\gamma\in J_j$ and $i+j=k$. Then $\langle x,\beta\rangle>i$ and $\langle x,\gamma\rangle>j$ imply that $\langle x,\alpha\rangle>k$ for $x\in \rd$, so $H_{\alpha}^k$ does not support a facet of $R$, a contradiction.
 If $\alpha+\beta\in J_t$ and $k_{\alpha+\beta}(\J)=t\leq m$ for some $\beta\in\Phi^+$ then we have $\langle x,\alpha+\beta\rangle>t$ for all $x\in\rd$ so we cannot have $\langle x,\beta\rangle<t-k$ for all $x\in\rd$, since together they would imply that $\langle x,\alpha\rangle>k$ for all $x\in\rd$, so $H_{\alpha}^k$ would not support a facet of $R$.
 Since $t-k<m$, the hyperplane $H_{\beta}^{t-k}$ does not intersect $\rd$, so this implies that $\langle x,\beta\rangle>t-k$ for all $x\in\rd$, so $\beta\in J_{t-k}$. This verifies the claim.
 From the fact that $\alpha$ is a rank $k$ indecomposable element of $\J$ it follows that $H_{\alpha}^k$ is a floor of $\wm A_{\circ}$ by Theorem \ref{indfloor}.\\
 \\
 Now suppose that $H_{\alpha}^k$ is a floor of $R$. Then $H_{w^{-1}(\alpha)}^k$ is a floor of $\rd$ and thus a floor of $\wm A_{\circ}$.
 So $H_{\alpha}^k$ is a floor of $\waf_RA_{\circ}=w\wm A_{\circ}$.
\end{proof}

\end{theorem}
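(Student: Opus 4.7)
The plan is to reduce the statement to the dominant case, which is already established by the results of Athanasiadis cited in Sections \ref{add} and \ref{fldomshialc}. Given any $m$-Shi region $R$, let $w\in W$ be the unique Weyl group element with $R\subseteq wC$ and set $\rd:=w^{-1}R\subseteq C$. By Lemma \ref{cham}, the walls of $R$ are hyperplanes $H_{w(\alpha)}^k$ with $\alpha\in\Phi^+$ and either $0\leq k<m$ or $k=m$ and $w(\alpha)\in\Phi^+$, so the walls of $\rd$ are all $m$-Shi hyperplanes. However $\rd$ is generally not itself an $m$-Shi region: hyperplanes $H_\alpha^m$ with $w(\alpha)\in-\Phi^+$ are $m$-Shi hyperplanes that may still cut through $\rd$.

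First I would define a chain $\J=(J_1,\ldots,J_\m)$ by
\[ J_k := \{\alpha\in\Phi^+ : \langle x,\alpha\rangle > k \text{ for all } x\in\rd\}, \]
and verify that $\J$ is a geometric chain of order filters. The filter additive closure $(J_i+J_j)\cap\Phi^+\subseteq J_{i+j}$ is immediate from the definition. For the dual ideal condition, I would compare $\J$ with $\J':=\phi(\re)$ for an arbitrary $m$-Shi region $\re\subseteq\rd$: since $\rd$ is on a fixed side of every $m$-Shi hyperplane not crossing it, the chains $\J$ and $\J'$ agree in ranks $1,\ldots,\m-1$, and the inclusion $J_\m\subseteq J_\m'$ holds because $\rd\supseteq\re$ is pinned above $H_\alpha^m$ whenever $\re$ is. Geometricity of $\J'$ then transfers to $\J$. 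By the dominant theory there is a dominant $m$-Shi region $\rmin=\phi^{-1}(\J)\subseteq\rd$ whose minimal alcove $\wm\ac$ satisfies $k(\wm,\alpha)=k_\alpha(\J)$, and the elementary estimate $k_\alpha(\J)\leq k_\alpha(\J')$ upgrades $\wm\ac$ to the minimal alcove among all alcoves in $\rd$, not just those in $\rmin$.

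Next I would set $\waf_R:=w\wm$ and verify that $\waf_R\ac\subseteq R$ realizes the minimality. Using $k(\waf_R,\alpha)=k(\wm,w^{-1}(\alpha))$ together with $k(\waf,-\beta)=-k(\waf,\beta)-1$, the bound $|k(\waf_R,\alpha)|\leq|k(\waf,\alpha)|$ reduces to two symmetric cases according to the sign of $w^{-1}(\alpha)$, both of which follow from the minimality of $\wm\ac$ in $\rd$. Uniqueness is automatic since an alcove is determined by its address. For the floor characterization, if $H_\alpha^k$ is a floor of $\waf_R\ac$ then $s_\alpha^k\waf_R\ac$ has strictly smaller $|k(\cdot,\alpha)|$ and identical other coordinates, so by minimality it cannot lie in $R$ and $H_\alpha^k$ must be a wall of $R$, hence a floor. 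Conversely, if $H_\alpha^k$ is a floor of $R$, then $H_{w^{-1}(\alpha)}^k$ is a floor of $\rd$, and I would verify directly from the definition of $\J$ that $w^{-1}(\alpha)$ is a rank-$k$ indecomposable element; Theorem \ref{indfloor} then upgrades this to the statement that $H_{w^{-1}(\alpha)}^k$ is a floor of $\wm\ac$, whence $H_\alpha^k$ is a floor of $\waf_R\ac$.

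The main obstacle I expect is the geometricity check for $\J$, particularly the ideal condition at indices summing to exactly $m$, since this is where one must combine the geometricity of an auxiliary $\J'$ with the subtle inclusion $J_\m\subseteq J_\m'$ reflecting how $\rd$ sits relative to the crossing hyperplanes $H_\alpha^m$. The indecomposability verification in the final step has a similar flavour and will require careful case analysis against each of the three defining conditions, using that the hyperplanes $H_\beta^{t-k}$ with $t-k<m$ do not intersect $\rd$.
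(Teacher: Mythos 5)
Your proposal is correct and follows essentially the same line of argument as the paper's proof: reduce to the dominant chamber via $\rd = w^{-1}R$, build the chain $\J$ from $\rd$, verify geometricity by comparing with the chain $\J'$ of an auxiliary dominant $m$-Shi region contained in $\rd$, take the minimal alcove $\wm\ac$ of $\rmin=\phi^{-1}(\J)$, push it back to $\waf_R = w\wm$, and handle the two sign cases of $w^{-1}(\alpha)$ together with the indecomposability check for the floor characterization. All the key steps and the technical hurdles you flag match those in the paper.
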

\subsection{$\m$-Shi alcoves and $(mh+1)$-stable affine Weyl group elements}
The following lemma characterises the $\m$-Shi alcoves. It is a straightforward generalisation of \cite[Proposition 7.3]{shi87sign}.
\begin{lemma}\label{minflo}
 An alcove $\waf A_{\circ}$ is an $\m$-Shi alcove if and only if all its floors are $m$-Shi hyperplanes.
 \begin{proof}
  The forward implication is immediate from Theorem \ref{min}.\\
  \\
  For the backward implication, we prove the contrapositive: we show that every alcove that is not an $\m$-Shi alcove has a floor that is not an $\m$-Shi hyperplane.
  So suppose $\waf A_{\circ}$ is an alcove contained in an $m$-Shi region $R$, and $\waf\neq \waf_R$.
  Consider the set 
  \begin{multline*}
  K=\{x\in V\mid k(\waf_R,\alpha)<\langle x,\alpha\rangle< k(\waf,\alpha)+1\text{ for all }\alpha\in\Phi\text{ with }k(\waf_R,\alpha)\geq0\\
  \text{ and }k(\waf,\alpha)<\langle x,\alpha\rangle< k(\waf_R,\alpha)+1\text{ for all }\alpha\in\Phi\text{ with }k(\waf_R,\alpha)<0\}.
  \end{multline*}
  Then any alcove $\waf'A_{\circ}$ has either $\waf'A_{\circ}\subseteq K$ or $\waf'A_{\circ}\cap K=\varnothing$. 
  For $\alpha\in\Phi$, we have 
  \[k(\waf_R,\alpha)\leq k(\waf',\alpha)\leq k(\waf,\alpha)\]
  whenever $\waf'A_{\circ}\subseteq K$ and $k(\waf_R,\alpha)\geq0$.
  Similarly 
  \[k(\waf,\alpha)\leq k(\waf',\alpha)\leq k(\waf_R,\alpha)\]
  whenever $\waf'A_{\circ}\subseteq K$ and $k(\waf_R,\alpha)<0$.
  Thus any hyperplane of the affine Coxeter arrangement that separates two alcoves contained in $K$ also separates $\waf_R\ac$ and $\waf\ac$. Since no $\m$-Shi hyperplane separates $\waf_R\ac$ and $\waf\ac$, no $\m$-Shi hyperplane separates two alcoves contained in $K$.
  Since $K$ is convex, there exists a sequence $(\waf_1,\waf_2,\ldots,\waf_l)$ with $\waf_1=\waf$, $\waf_l=\waf_R$, and $\waf_iA_{\circ}\subseteq K$ for all $i\in[l]$, such that $\waf_iA_{\circ}$ shares a facet with $\waf_{i+1}A_{\circ}$ for all $i\in[l-1]$.
  So the supporting hyperplane of the common facet of $\waf_1A_{\circ}=\waf A_{\circ}$ and $\waf_2A_{\circ}$ is a floor of $\waf A_{\circ}$ which is not an $\m$-Shi hyperplane.
 \end{proof}

\end{lemma}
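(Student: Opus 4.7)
The forward direction is essentially immediate from Theorem~\ref{min}: since $\waf\ac$ being an $\m$-Shi alcove means $\waf\ac = \waf_R\ac$ for its containing $\m$-Shi region $R$, and the floors of $\waf_R\ac$ coincide with the floors of $R$, and the walls of $R$ are by definition $\m$-Shi hyperplanes, every floor of $\waf\ac$ is an $\m$-Shi hyperplane.

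For the backward direction I would argue the contrapositive: assume $\waf\ac \subseteq R$ but $\waf\ac \neq \waf_R\ac$, and produce a floor of $\waf\ac$ that is not an $\m$-Shi hyperplane. The plan is to carve out a convex region $K \subseteq V$ that ``interpolates'' between $\waf\ac$ and $\waf_R\ac$ in address coordinates. Concretely, for each $\alpha \in \Phi$ I would take the open slab bounded by the two hyperplanes $H_\alpha^{k(\waf_R,\alpha)}$ and $H_\alpha^{k(\waf,\alpha)+1}$ when $k(\waf_R,\alpha) \geq 0$, and symmetrically bounded by $H_\alpha^{k(\waf,\alpha)}$ and $H_\alpha^{k(\waf_R,\alpha)+1}$ when $k(\waf_R,\alpha) < 0$, and intersect these slabs over all $\alpha$. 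By construction $K$ is convex and contains both $\waf\ac$ and $\waf_R\ac$, and any alcove $\waf'\ac \subseteq K$ has its $\alpha$-address sandwiched between $k(\waf,\alpha)$ and $k(\waf_R,\alpha)$.

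The key consequence is that any hyperplane of the affine Coxeter arrangement separating two alcoves in $K$ must also separate $\waf\ac$ from $\waf_R\ac$. But $\waf\ac$ and $\waf_R\ac$ lie in the same $\m$-Shi region, so no $\m$-Shi hyperplane separates them; hence no $\m$-Shi hyperplane separates any two alcoves inside $K$. Since $K$ is open and convex, there is a gallery $\waf\ac = \waf_1\ac, \waf_2\ac, \ldots, \waf_l\ac = \waf_R\ac$ of alcoves in $K$ with consecutive alcoves sharing a facet. The facet shared by $\waf_1\ac$ and $\waf_2\ac$ is supported by a hyperplane $H$ which is a wall of $\waf\ac$ and is not an $\m$-Shi hyperplane.

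It remains to verify that $H$ is actually a \emph{floor} of $\waf\ac$ and not merely a wall. This is where I would be most careful: stepping from $\waf\ac$ toward $\waf_R\ac$ inside $K$, the $\alpha$-coordinate is forced to move in the direction of $k(\waf_R,\alpha)$, which means $|k(\waf,\alpha)|$ strictly decreases for the root $\alpha$ supporting $H$. For $\alpha \in \Phi^+$ with $k(\waf,\alpha) > 0$ (resp.\ $k(\waf,\alpha) < -1$) this crossing hyperplane $H_\alpha^{k(\waf,\alpha)}$ (resp.\ $H_{-\alpha}^{-k(\waf,\alpha)-1} = H_\alpha^{k(\waf,\alpha)+1}$) does not pass through the origin and separates $\waf\ac$ from the origin, hence is a floor. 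The main obstacle I anticipate is the bookkeeping of sign conventions needed to make the ``$K$-containment implies sandwiched addresses'' argument airtight simultaneously for positive and negative $k$-values, and to confirm in each sign regime that a step toward $\waf_R\ac$ decreases $|k|$ and therefore crosses a floor rather than a ceiling.
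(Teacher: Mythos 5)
Your proposal matches the paper's proof essentially step for step: the same interpolating convex region $K$, the same observation that any hyperplane separating two alcoves in $K$ also separates $\waf\ac$ from $\waf_R\ac$, the same gallery inside $K$ from $\waf\ac$ to $\waf_R\ac$, and the same conclusion about the first crossing. The one place you go beyond the paper is in explicitly flagging and sketching the verification that the first crossed hyperplane is a \emph{floor} rather than a ceiling of $\waf\ac$ (the paper simply asserts this); your sign analysis is correct, since $k(\waf_R,\alpha)\geq 0$ forces the crossed hyperplane to be $H_\alpha^{k(\waf,\alpha)}$ with $k(\waf,\alpha)\geq 1$, and $k(\waf_R,\alpha)<0$ forces $H_\alpha^{k(\waf,\alpha)+1}$ with $k(\waf,\alpha)+1<0$, both of which separate $\waf\ac$ from the origin and miss it.
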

\noindent
We can now relate the $\m$-Shi alcoves to the Fuß-Catalan ($\rat=mh+1$) case of the set of $\rat$-stable affine Weyl group elements $\wa^{\rat}$ defined in Section \ref{stable}.
\begin{theorem}\label{shistable}
 An alcove $\waf A_{\circ}$ is an $\m$-Shi alcove if and only if $\waf\in\wa^{mh+1}$.
 \begin{proof}
  First note that
  \[\ar_{mh+1}=\ar_1+m\delta=\widetilde{\simp}+m\delta.\]
  Suppose $\waf\ac$ is an $\m$-Shi alcove and take 
  \[\alpha+k\delta\in\waf(\ar_{mh+1})=\waf(\widetilde{\simp}+m\delta)=\waf(\widetilde{\simp})+m\delta.\]
  So $\alpha+(k-m)\delta\in\waf(\widetilde{\simp})$ and thus $\waf^{-1}(-\alpha+(m-k)\delta)\in-\widetilde{\simp}$.
  If $k\geq m$ then $\alpha+k\delta\in\ar^+$. Otherwise by Lemma \ref{arfl} $H_{-\alpha}^{k-m}$ is a floor of $\waf\ac$ and thus by Lemma \ref{minflo} an $\m$-Shi hyperplane.
  So $k\geq0$ and $k>0$ if $\alpha\in-\Phi^+$. Thus $\alpha+k\delta\in\ar^+$. So $\waf(\ar_{mh+1})\subseteq\ar^+$ and therefore $\waf\in\wa^{mh+1}$.\\
  \\
  Conversely suppose $\waf\in\wa^{mh+1}$ and $H_{\alpha}^{-k}$ is a floor of $\waf\ac$ where $k>0$. Then by Lemma \ref{arfl} we have $\waf^{-1}(\alpha+k\delta)\in-\widetilde{\simp}$.
  Thus 
  \[-\alpha+(m-k)\delta\in\waf(\ar_{mh+1})\subseteq\ar^+.\]
  So $k\leq m$ and $k<m$ if $\alpha\in\Phi^+$. Thus $H_{\alpha}^{-k}$ is an $\m$-Shi hyperplane. So all floors of $\waf\ac$ are $\m$-Shi hyperplanes and thus by Lemma \ref{minflo} $\waf\ac$ is an $\m$-Shi alcove.
 \end{proof}

\end{theorem}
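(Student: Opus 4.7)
The plan is to combine the earlier Lemmas \ref{minflo} and \ref{arfl} via a direct calculation of which affine roots have height $mh+1$. The key observation is that $\ar_{mh+1} = \widetilde{\simp} + m\delta$: an affine root $\alpha + k\delta$ has height $\h(\alpha) + kh$, so height $mh+1$ forces $\h(\alpha) \equiv 1 \pmod h$, which by inspection of root heights (lying in $\{-(h-1),\ldots,-1,1,\ldots,h-1\}$) leaves only $\alpha \in \simp$ with $k = m$ or $\alpha = -\hr$ with $k = m+1$; the latter is $\alpha_0 + m\delta$, so together we recover all of $\widetilde{\simp} + m\delta$. Moreover, since the translation part of $\waf$ only affects the $\delta$-coefficient linearly in the finite root, $\waf$ commutes with shifting by $m\delta$, giving $\waf(\ar_{mh+1}) = \waf(\widetilde{\simp}) + m\delta$.

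For the forward implication, I would suppose $\waf\ac$ is an $\m$-Shi alcove and take $\alpha+k\delta \in \waf(\ar_{mh+1})$, so $\alpha+(k-m)\delta \in \waf(\widetilde{\simp})$. If $k \geq m$ the affine root is automatically in $\ar^+$. Otherwise $m-k > 0$, and applying Lemma \ref{arfl} to $-\alpha+(m-k)\delta \in \ar^+$ (whose image under $\waf^{-1}$ lies in $-\widetilde{\simp}$) shows that $H_{-\alpha}^{k-m}$ is a floor of $\waf\ac$. By Lemma \ref{minflo} this floor must be an $\m$-Shi hyperplane, which translates into the bounds $k \geq 0$ (and $k > 0$ when $\alpha \in -\Phi^+$); either way $\alpha+k\delta \in \ar^+$.

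Conversely, supposing $\waf \in \wa^{mh+1}$, I would verify via Lemma \ref{minflo} that all floors are $\m$-Shi. Given a floor $H_\alpha^{-k}$ with $\alpha + k\delta \in \ar^+$ and $k > 0$, Lemma \ref{arfl} gives $\waf^{-1}(\alpha+k\delta) \in -\widetilde{\simp}$, whence $-\alpha + (m-k)\delta \in \waf(\widetilde{\simp}) + m\delta = \waf(\ar_{mh+1}) \subseteq \ar^+$. Reading off the sign condition yields $k \leq m$, with $k < m$ if $\alpha \in \Phi^+$, exactly the description of an $\m$-Shi hyperplane.

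The whole argument is structurally a dictionary translation once one sees $\ar_{mh+1} = \widetilde{\simp} + m\delta$. The only real hazard is bookkeeping: the definition of $\ar^+$ treats $k = 0$ asymmetrically depending on the sign of $\alpha$, and the $\m$-Shi hyperplane range $-m < k \leq m$ is similarly asymmetric, so one must track whether $\alpha \in \Phi^+$ or $-\Phi^+$ in each of the two boundary cases ($k = m$ forward, $k = m$ backward) to match the strict versus non-strict inequalities. No deeper geometry is needed beyond Lemmas \ref{arfl} and \ref{minflo}.
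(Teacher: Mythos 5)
Your argument is correct and follows the paper's proof essentially line for line: both directions hinge on the identity $\ar_{mh+1}=\widetilde{\simp}+m\delta$, the floor characterisation of Lemma~\ref{arfl}, and the reduction of ``$m$-Shi alcove'' to ``all floors are $m$-Shi hyperplanes'' via Lemma~\ref{minflo}. The only additions are a short justification of the height computation and an explicit remark about the sign bookkeeping, neither of which changes the structure of the argument.
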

\noindent
From Theorem \ref{shistable} and Corollary \ref{ratr} we deduce the following theorem.
\begin{theorem}
 The number of $\m$-Shi alcoves equals $(mh+1)^r$.
\end{theorem}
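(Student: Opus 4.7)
The plan is to simply combine the two results that have just been established. By Theorem \ref{shistable}, the map $\waf\ac \mapsto \waf$ gives a bijection between the set $\alc$ of $\m$-Shi alcoves and the set $\wa^{mh+1}$ of $(mh+1)$-stable affine Weyl group elements (specialising the parameter $\rat$ in Section \ref{stable} to $\rat = mh+1$, which is indeed relatively prime to $h$). Therefore $|\alc| = |\wa^{mh+1}|$.

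Next, I would invoke Corollary \ref{ratr}, which asserts that $|\wa^{\rat}| = \rat^r$ for any positive integer $\rat$ relatively prime to the Coxeter number $h$. Substituting $\rat = mh+1$ yields $|\wa^{mh+1}| = (mh+1)^r$, and combining with the previous identification gives the claimed count.

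There is essentially no obstacle here: all the real work has been done. The bijective content sits in Theorem \ref{shistable} (matching the geometric characterisation of $\m$-Shi alcoves via their floors with the affine-root characterisation of $(mh+1)$-stability), and the counting content sits in Corollary \ref{ratr}, which itself rests on Theorem \ref{wf} and Lemma \ref{sommers} to transfer the count from $\sommers$ to $\rat\ac$ and then compare volumes. The only small check worth flagging explicitly is that $\gcd(mh+1, h) = 1$, which is immediate, so Corollary \ref{ratr} applies. Thus the proof reduces to a two-line invocation of the cited results.
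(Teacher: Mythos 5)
Your proposal is correct and matches the paper's own (unstated but implied) argument exactly: the paper deduces this theorem directly from Theorem \ref{shistable} and Corollary \ref{ratr}, precisely as you do. Your added remark that $\gcd(mh+1,h)=1$ is a harmless but appropriate sanity check.
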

\noindent
We use Theorem \ref{min} to recover the following result, originally proved by Yoshinaga using the theory of free arrangements \protect{\cite[Theorem 1.2]{yoshinaga04characterization}}.
\begin{theorem}
 The number of $\m$-Shi regions equals $(mh+1)^r$.
\end{theorem}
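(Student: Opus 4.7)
The plan is to invoke the bijection between $\m$-Shi regions and their minimal alcoves established in Theorem \ref{min}, and then apply the preceding theorem that counts $\m$-Shi alcoves.

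More precisely, by Theorem \ref{min} every $\m$-Shi region $R$ contains a unique minimal alcove $\waf_R\ac$, and by definition the set $\alc$ of $\m$-Shi alcoves consists of exactly these minimal alcoves. The assignment $R \mapsto \waf_R\ac$ is therefore a surjection from $\m$-Shi regions onto $\alc$. It is also injective, since distinct regions are disjoint and so cannot share a minimal alcove. Hence the number of $\m$-Shi regions equals $|\alc|$.

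By Theorem \ref{shistable}, $|\alc|$ equals the cardinality of $\wa^{mh+1}$, which by Corollary \ref{ratr} (applied with $\rat = mh+1$, noting that $mh+1$ is relatively prime to $h$) equals $(mh+1)^r$. Combining these two equalities yields the claim.

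The proof is essentially a one-line chain of bijections, so there is no genuine obstacle here; all the substantive work has been done in Theorems \ref{min} and \ref{shistable} and Corollary \ref{ratr}. The only thing to be careful about is confirming that $\gcd(mh+1, h) = 1$ so that Corollary \ref{ratr} applies, which is immediate.
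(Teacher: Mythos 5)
Your proof is correct and follows exactly the paper's intended route: Theorem \ref{min} gives the bijection between $\m$-Shi regions and $\m$-Shi alcoves, and the count $(mh+1)^r$ of $\m$-Shi alcoves comes from Theorem \ref{shistable} together with Corollary \ref{ratr}. The paper leaves this chain implicit (remarking only that it uses Theorem \ref{min} to recover Yoshinaga's result), so your write-up simply makes the same argument explicit.
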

\section{Diagonally labelled Dyck paths}\label{diag}
From now on, take $\Phi$ be the root system of type $A_{n-1}$ and set $m=1$. Thus we write $\mathsf{Alc}_{\Phi}$ for $\mathsf{Alc}^1_{\Phi}$, $\mathsf{Park}(\Phi)$ for $\mathsf{Park}^{(1)}(\Phi)$, and so on.
Our aim is to relate our zeta map $\zeta$ from Theorem \ref{zeta} to the zeta map $\zeta_{HL}$ of Haglund and Loehr \cite[Theorem 5.6]{haglund08catalan}.
To do this, we give a combinatorial model for $\mathsf{Park}(\Phi)$ in terms of \defn{diagonally labelled Dyck paths}.
A \defn{Dyck path} of length $n$ is a lattice path in $\Z^2$ consisting of North and East steps that goes from $(0,0)$ to $(n,n)$ and never goes below the diagonal $x=y$.
Equivalently, it is a rational $(n+1,n)$-Dyck path with its final step (always an East step) removed. Thus we will not distinguish between Dyck paths and $(n+1,n)$-Dyck paths.\\
\begin{figure}[h]
\begin{center}
\begin{tikzpicture}[scale=.6]
\begin{scope}
	\draw[gray] (0,0) grid (6,6);
	\draw[very thick] (0,0)--(0,1)--(1,1)--(1,4)--(3,4)--(3,6)--(6,6);
	\draw[xshift=5mm,yshift=5mm]
		(0,0) node{\large{$3$}}
		(1,1) node{\large{$6$}}
		(2,2) node{\large{$1$}}
		(3,3) node{\large{$2$}}
		(4,4) node{\large{$4$}}
		(5,5) node{\large{$5$}}
		(0,1) node{\large{$\bullet$}}
		(2,4) node{\large{$\bullet$}};
\end{scope}
\end{tikzpicture}
\caption{A diagonally labelled Dyck path $(w,D)$ of length $6$. 
The valleys are marked by dots.}
\end{center}
\end{figure}
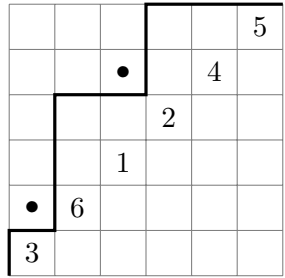
\\
A pair $(i,j)$ of positive integers is called a \defn{valley} of a Dyck path $D$ if the $i$-th East step of $D$ is immediately followed by its $j$-th North step.
The pair $(w,D)$ with $w\in S_n$ is called a diagonally labelled Dyck path if $w(i)<w(j)$ whenever $(i,j)$ is a valley of $D$.
We say that the valley $(i,j)$ is labelled $(w(i),w(j))$.
We think of $w$ as labeling the boxes crossed by the diagonal between $(0,0)$ and $(n,n)$ from bottom to top.
So the condition on $w$ is that whenever an East step of $D$ is followed by a North step, the label below the East step is less than the label to the right of the North step.
We write $\D_n$ for the set of diagonally labelled Dyck paths on length $n$.

\section{The combinatorial zeta map}\label{zetahl}
The zeta map $\zeta_{HL}$ of Haglund and Loehr is a bijection from $\pf_n$ to $\D_n$.
It is defined as follows. The first ingredient is the zeta map $\zeta_H$ of Haglund \cite[Theorem 3.15]{haglund08catalan}. It is a bijection from the set of Dyck paths of length $n$ to itself.
Given a Dyck path $P$, iterate the following procedure for $i=0,1,\ldots,n$: go through the area vector of $P$ from left to right and draw a North step for every $i$ you see and an East step for every $i-1$.
This gives a Dyck path $\zeta_H(P)$.\\
\\
The second ingredient is the \defn{diagonal reading word} $\drw(P,\sigma)$ of the vertically labelled Dyck path $(P,\sigma)$.
It is given by first reading the labels of rows of area $0$ from bottom to top, then the labels of rows of area $1$ from bottom to top, and so on.\\
\begin{figure}[h]
\begin{center}
\begin{tikzpicture}[scale=.6]
\begin{scope}
	\draw[gray] (0,0) grid (5,5);
	\draw[very thick] (0,0)--(0,3)--(2,3)--(2,4)--(3,4)--(3,5)--(5,5);
	\draw (2.5,-.2) node[anchor=north]{$a=(0,1,2,1,1)$};
	\draw[->,thick] (6,2.5)--(7,2.5);
	\draw[xshift=-5mm,yshift=5mm]
		(0,0) node{\large{$1$}}
		(0,1) node{\large{$2$}}
		(0,2) node{\large{$4$}}
		(2,3) node{\large{$3$}}
		(3,4) node{\large{$5$}};
\end{scope}
%
%
%
\begin{scope}[xshift=8cm]
	\draw[gray] (0,0) grid (5,5);
	\draw[very thick] (0,0)--(0,1)--(1,1)--(1,4)--(2,4)--(2,5)--(5,5);
	\draw[->,thick] (-2,2.5)--(-1,2.5);
	\draw[xshift=5mm,yshift=5mm]
		(0,0) node{\large{$1$}}
		(1,1) node{\large{$2$}}
		(2,2) node{\large{$3$}}
		(3,3) node{\large{$5$}}
		(4,4) node{\large{$4$}}
		(0,1) node{\large{$\bullet$}}
		(1,4) node{\large{$\bullet$}};
\end{scope}
\end{tikzpicture}
\caption{The zeta map $\zeta_{HL}$ maps a vertically labelled Dyck path $(P,\sigma)$ to a diagonally labelled Dyck path $(w,D)$.}
\label{Figure:zetaA}
\end{center}
\end{figure}
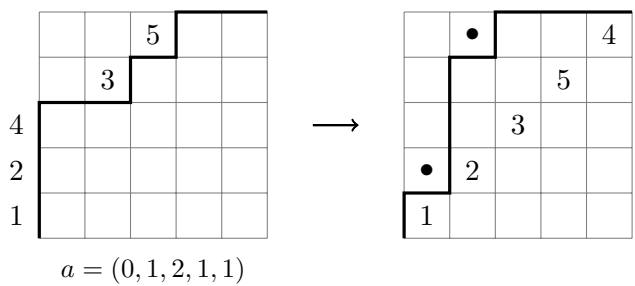
\\
We define $\zeta_{HL}(P,\sigma):=(\drw(P,\sigma),\zeta_H(P))$. The following is an important property of $\zeta_{HL}$.
It inspired the generalisation $\zeta$ of $\zeta_{HL}$ that will be defined later.
\begin{theorem}[\protect{\cite[Section 5.2]{armstrong14rational}}]\label{armzeta}
 For any vertically labelled Dyck path $(P,\sigma)$ and any pair of positive integers $(b,c)$, the diagonally labelled Dyck path $\zeta_{HL}(P,\sigma)=(\drw(P,\sigma),\zeta_H(P))$ has a valley labelled $(b,c)$ if and only if $(P,\sigma)$ has a rise labelled $(b,c)$.
 \begin{proof}
  Let $i\in[n]$ be an index. Let $a=a_i$ be the area of the $i$-th row of $(P,\sigma)$ and let $\sigma(i)$ be its label.
  Suppose that $\sigma(i)=\drw(P,\sigma)(j)$ is the $j$-th label being read in the diagonal reading word of $(P,\sigma)$.
  That means that there are exactly $j-1$ rows that have either smaller area than row $i$, or have the same area $a$ and are nearer the bottom.
  Thus in the construction of $\zeta_H(P)$, the $j$-th North step is drawn when the area vector entry $a_i$ is read in iteration $a$, and the $j$-th East step is drawn when the area vector entry $a_i$ is read in iteration $a+1$.\\
  \\
  Suppose that $i$ is a rise of $(P,\sigma)$, labelled $(\sigma(i),\sigma(i+1))$. Then $a_{i+1}=a+1$. So in particular, the label $\sigma(i+1)$ is read later in the diagonal reading word than the label $\sigma(i)$.
  That is, if $\sigma(i)=\drw(P,\sigma)(j)$ and $\sigma(i+1)=\drw(P,\sigma)(k)$, then $j<k$. In the construction of $\zeta_H(P)$, in the $(a+1)$-st iteration the $j$-th East step is drawn when $a_i$ is read, and immediately afterwards the $k$-th North step is drawn when $a_{i+1}$ is read.
  Thus $(j,k)$ is a valley of $\zeta_{H}(P)$. In $\zeta_{HL}(P,\sigma)=(\drw(P,\sigma),\zeta_H(P))$ it is labelled $(\drw(P,\sigma)(j),\drw(P,\sigma)(k))=(\sigma(i),\sigma(i+1))$.\\
  \\
  Conversely suppose that $(j,k)$ is a valley of $\zeta_{HL}(P,\sigma)$ that is labelled by $(b,c)=(\drw(P,\sigma)(j),\drw(P,\sigma)(k))$.
  That is, the $j$-th East step is immediately followed by the $k$-th North step. Since every iteration except for the $0$-th starts with an East step, both steps must have been drawn in the same iteration, say iteration $a+1$.
  Then there is some row $i$ with area $a$ and a row $j>i$ with area $a+1$ such that no row between $i$ and $j$ has area either $a$ or $a+1$.
  This implies $j=i+1$, so $i$ is a rise of $(P,\sigma)$. As above, it follows that $\sigma(i)=\drw(P,\sigma)(j)$ and $\sigma(i+1)=\drw(P,\sigma)(k)$, so the rise $i$ is labelled $(\sigma(i),\sigma(i+1))=(b,c)$.
 \end{proof}

\end{theorem}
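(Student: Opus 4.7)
The plan is to trace through the construction of $\zeta_{HL}$ explicitly and match rises of $(P,\sigma)$ with valleys of $\zeta_{HL}(P,\sigma)$ via the iterations of Haglund's zeta map $\zeta_H$. The first step is to translate the rise condition into a condition on the area vector: since $a_i=(i-1)-P_i$ for an ordinary Dyck path, we have $a_{i+1}-a_i=1-(P_{i+1}-P_i)$, so $i$ is a rise of $P$ if and only if $a_{i+1}=a_i+1$. Since a Dyck path satisfies $P_{j+1}\geq P_j$, we also have the crucial inequality $a_{j+1}\leq a_j+1$ for every $j$.

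Next I would analyse the structure of $\zeta_H(P)$ iteration by iteration. Each row $i$ of $(P,\sigma)$ contributes one North step (in iteration $a_i$) and one East step (in iteration $a_i+1$). Within each iteration the entries are read left to right, so the North steps of $\zeta_H(P)$ appear in the order "first by area, then by position"—which is precisely the diagonal reading order. The East steps appear in the same relative order, so if we let $r(j)$ denote the row that produces the $j$-th North step, then $r(j)$ also produces the $j$-th East step, and $\drw(P,\sigma)(j)=\sigma(r(j))$.

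Now I would prove that every iteration $a\geq 1$ of the construction begins with an East step. If the first position $i$ satisfying $a_i\in\{a-1,a\}$ had $a_i=a$, then $a_{i-1}\leq a-1$ by choice of $i$, contradicting $a_i\leq a_{i-1}+1$. Hence valleys (EN patterns) in $\zeta_H(P)$ cannot straddle two consecutive iterations; every valley lies strictly inside a single iteration $a+1$. Inside iteration $a+1$, a valley comes from an E drawn at some position $i$ with $a_i=a$ immediately followed by an N drawn at some later position $i'$ with $a_{i'}=a+1$, with no intermediate entry equal to $a$ or $a+1$. Applying the inequality $a_{j+1}\leq a_j+1$ iteratively to the positions between $i$ and $i'$ forces them all to have area $\leq a-1$, which is incompatible with $a_{i'}=a+1$ unless $i'=i+1$. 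This sets up a bijection between rises $i$ of $P$ (where $a_{i+1}=a_i+1$) and valleys of $\zeta_H(P)$.

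Finally, the labelling is automatic from the analysis of $r(j)$: the valley arising from rise $i$ occurs between the $j$-th East step and the $k$-th North step where $r(j)=i$ and $r(k)=i+1$, so in $\zeta_{HL}(P,\sigma)$ it is labelled $(\drw(P,\sigma)(j),\drw(P,\sigma)(k))=(\sigma(i),\sigma(i+1))$, which is exactly the label of the rise. The main obstacle is the combinatorial argument forcing $i'=i+1$ inside an iteration; once that (and the fact that iterations start with E) is established, the matching of labels follows from the observation that $\zeta_H$ processes the N and E contributions of each row in the same diagonal reading order.
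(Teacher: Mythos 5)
Your proposal is correct and follows essentially the same route as the paper: you track which row of $P$ produces the $j$-th North and $j$-th East step of $\zeta_H(P)$, observe that this is exactly the diagonal reading order so $\drw(P,\sigma)(j)=\sigma(r(j))$, and then match rises to valleys in both directions. You in fact supply more detail than the paper on the two facts it merely asserts, namely that every iteration $a\geq 1$ begins with an East step and that a valley inside an iteration forces the corresponding rows to be adjacent; both of your arguments for these points (using $a_{j+1}\leq a_j+1$) are sound.
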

\noindent
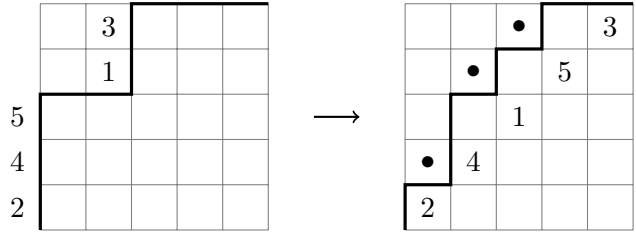
\begin{figure}[h]
\begin{center}
\begin{tikzpicture}[scale=.6]
\begin{scope}
	\draw[gray] (0,0) grid (5,5);
	\draw[very thick] (0,0)--(0,3)--(2,3)--(2,5)--(5,5);
	\draw[->,thick] (6,2.5)--(7,2.5);
	\draw[xshift=-5mm,yshift=5mm]
		(0,0) node{\large{$2$}}
		(0,1) node{\large{$4$}}
		(0,2) node{\large{$5$}}
		(2,3) node{\large{$1$}}
		(2,4) node{\large{$3$}};
\end{scope}
\begin{scope}[xshift=8cm]
	\draw[gray] (0,0) grid (5,5);
	\draw[very thick] (0,0)--(0,1)--(1,1)--(1,3)--(2,3)--(2,4)--(3,4)--(3,5)--(5,5);
	\draw[xshift=5mm,yshift=5mm]
		(0,0) node{\large{$2$}}
		(1,1) node{\large{$4$}}
		(2,2) node{\large{$1$}}
		(3,3) node{\large{$5$}}
		(4,4) node{\large{$3$}}
		(0,1) node{\large{$\bullet$}}
		(1,3) node{\large{$\bullet$}}
		(2,4) node{\large{$\bullet$}};
\end{scope}
\end{tikzpicture}
\caption{The zeta map $\zeta_{HL}$: The vertically labelled Dyck path $(P,\sigma)$ on the left is mapped to the diagonally labelled Dyck path $(w,D)$ on the right.}
\end{center}
\end{figure}
\\
A Dyck path is uniquely determined by its valleys. Thus Theorem \ref{armzeta} gives rise to an alternative description of $\zeta_{HL}$. We have $\zeta_{HL}(P,\sigma)=(\drw(P,\sigma),D)$, where $D$ is the unique Dyck path such that $(\drw(P,\sigma),D)$ has a valley labelled $(b,c)$ if and only if $(P,\sigma)$ has a rise labelled $(b,c)$.\\
\\
\textbf{Remark.} The combinatorial zeta map $\zeta_{H}$ on \defn{unlabelled} Dyck paths has a generalisation to rational $p/n$-Dyck paths \cite[Section 4.2]{armstrong14sweep} that Williams recently proved to be a bijection \cite{williams15sweep}.
We do not know whether there is a uniform generalisation of the zeta map at the rational level of generality, but the next section will provide one for the Fuß-Catalan case $p=mh+1$.
\section{The uniform zeta map}\label{zetasec}
We will describe a uniform generalisation $\zeta$ of the zeta map $\zeta_{HL}$ of Haglund and Loehr to all irreducible crystallographic root systems, and also to the Fuß-Catalan level of generality.
Recall from Section \ref{ptt} that $\pf_{n}=\pf_{n+1/n}$ is naturally in bijection with the finite torus $\Q/(h+1)\Q$ of the root system of type $A_{n-1}$.
The Fuß-Catalan generalisation is given by $\Q/(mh+1)\Q$ for $m$ a positive integer. It remains to find an interpretation of $\D_n$ in terms of the root system of type $A_{n-1}$, and also a Fuß-Catalan generalisation. The next section provides the appropriate uniform generalisation of $\D_n$ to all irreducible crystallographic root systems and also to the Fuß-Catalan level, though a demonstration of this fact will have to wait until Section \ref{zeta+zeta}.
\subsection{The nonnesting parking functions}\label{nnpark}
The set of \defn{nonnesting parking functions} $\park$ of an irreducible crystallographic root system $\Phi$ was introduced by Armstrong, Reiner and Rhoades \cite{armstrong12parking}.
It was defined in order to combine two desirable properties: being naturally in bijection with the Shi regions of $\Phi$ and carrying a natural $W$-action such that $\park$ is isomorphic to the finite torus $\Q/(h+1)\Q$ as a $W$-set.
The latter property justifies the name ``nonnesting parking functions'', since $\park\cong\Q/(n+1)\Q\cong\pf_n$ if $\Phi$ is of type $A_{n-1}$.\\
\\
The set of \defn{$\m$-nonnesting parking functions} $\parkm$ is the natural Fuß-Catalan generalisation of $\park$. It was introduced by Rhoades \cite{rhoades12parking}.
Given a geometric chain $\J$ of $\m$ order filters in the root poset of $\Phi$, define $\ind(\J)$ as the set of rank $\m$ indecomposable elements of $\J$ (see Section \ref{fldomshialc}) and let
\[W_{\J}=\langle\{s_{\alpha}:\alpha\in\ind(\J)\}\rangle.\]
The set $\parkm$ of \defn{$\m$-nonnesting parking functions} of $\Phi$ is the set of equivalence classes of pairs $(w,\J)$ with $w\in W$ and $\J$ a geometric chain of $\m$ order filters under the equivalence relation
$$(w_1,\J_1)\sim(w_2,\J_2)\text{ if and only if }\J_1=\J_2\text{ and }w_1W_{\J_1}=w_2W_{\J_1}.$$
$\parkm$ is endowed with a left action of $W$ defined by
$$u\cdot[w,\J]:=[uw,\J]$$
for $u\in W$.\\
\\
All the rank $\m$ indecomposable elements of a geometric chain of order filters $\J=(J_1,J_2,\ldots,J_{\m})$ are minimal elements of $J_{\m}$ by \cite[Lemma 1]{thiel13hf}.
Thus in particular they are incomparable, that is they form an antichain in the root poset. So there is some $u\in W$ with $I:=u(\ind(\J))\subseteq\simp$ by \cite[Theorem~6.4]{sommers05stable}.
In particular, $W_{\J}=u^{-1}W_Iu$ is a \emph{parabolic subgroup} of $W$ and any left coset $wW_{\J}$ of $W_{\J}$ in $W$ has a unique representative $w'$ such that $w'(\ind(\J))\subseteq\Phi^+$.
\begin{lemma}\label{indinv}
 For any dominant $\m$-Shi region $R$ corresponding to a geometric chain of order filters $\J$ we have
 \[\ind(\J)=\waf_R(\ar_{mh+1})\cap\Phi.\]
 \begin{proof}
  For $\alpha\in\Phi^+$, we have the following chain of equivalences.
  \begin{align*}
   &\alpha\in\ind(\J)\\
   &\Leftrightarrow H_{\alpha}^{\m}\text{ is a floor of }\waf_R\ac\\
   &\Leftrightarrow \waf_R^{-1}(-\alpha+\m\delta)\in-\widetilde{\Delta}\\
   &\Leftrightarrow \waf_R^{-1}(-\alpha)\in-\widetilde{\Delta}-\m\delta=-\ar_{mh+1}\\
   &\Leftrightarrow \alpha\in\waf_R(\ar_{mh+1}).
  \end{align*}
  Here we used Theorem \ref{indfloor} and Lemma \ref{arfl}.
 \end{proof}

\end{lemma}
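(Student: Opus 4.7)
The plan is to follow the chain of equivalences already hinted at in the statement. First I would reduce to the case $\alpha\in\Phi^+$: on the left, every rank-$m$ indecomposable element of $\J$ is positive (indeed a minimal element of $J_m$, as noted just before the lemma); on the right, since $\waf_R\ac$ is an $m$-Shi alcove, Theorem \ref{shistable} gives $\waf_R\in\wa^{mh+1}$, hence $\waf_R(\ar_{mh+1})\subseteq\ar^+$, so $\waf_R(\ar_{mh+1})\cap\Phi\subseteq\Phi^+$. Thus both sides contain only positive roots, and it suffices to establish the claim for $\alpha\in\Phi^+$.

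For $\alpha\in\Phi^+$, Theorem \ref{indfloor} turns rank-$m$ indecomposability of $\alpha$ in $\J$ into the statement that $H_\alpha^m$ is a floor of $\waf_R\ac$. To apply Lemma \ref{arfl}, I rewrite this hyperplane as $H_{-\alpha}^{-m}$; the affine root $-\alpha+m\delta$ lies in $\ar^+$ (since $-\alpha\in-\Phi^+$ and $m>0$), so Lemma \ref{arfl} yields that $H_\alpha^m$ is a floor of $\waf_R\ac$ if and only if $\waf_R^{-1}(-\alpha+m\delta)\in-\widetilde{\simp}$.

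Finally, the action of $\wa$ on $\widetilde{V}=V\oplus\R\delta$ fixes $\delta$, so $\waf_R^{-1}(-\alpha+m\delta)=\waf_R^{-1}(-\alpha)+m\delta$. Therefore this last membership is equivalent to $\waf_R^{-1}(-\alpha)\in-\widetilde{\simp}-m\delta$, and since $\ar_{mh+1}=\widetilde{\simp}+m\delta$ (as used in the proof of Theorem \ref{shistable}), this is the same as $\waf_R^{-1}(-\alpha)\in-\ar_{mh+1}$, i.e.\ $\alpha\in\waf_R(\ar_{mh+1})$. Concatenating the equivalences proves the lemma.

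There is really no hard step here: every link in the chain is a direct citation of Theorem \ref{indfloor}, Lemma \ref{arfl}, Theorem \ref{shistable}, or the definition of $\ar_{mh+1}$. The only minor subtlety is remembering that Lemma \ref{arfl} is stated in terms of $H_\alpha^{-k}$ with $k>0$, which forces the cosmetic rewriting $H_\alpha^m=H_{-\alpha}^{-m}$ and the corresponding use of the affine root $-\alpha+m\delta\in\ar^+$.
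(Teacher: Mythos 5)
Your proof is correct and follows exactly the same chain of equivalences as the paper, invoking Theorem \ref{indfloor}, Lemma \ref{arfl}, and the identity $\ar_{mh+1}=\widetilde{\simp}+m\delta$ in the same order. The only addition is that you explicitly justify the reduction to $\alpha\in\Phi^+$ (both sides lie in $\Phi^+$, using Theorem \ref{shistable} for the right-hand side), a step the paper leaves implicit.
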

\noindent
The following natural bijection relates the $\m$-nonnesting parking functions to the minimal alcoves of the $\m$-Shi arrangement, or equivalently the $(mh+1)$-stable affine Weyl group elements.
\begin{theorem}\label{pta}
 The map
 \begin{align*}
   \pta:\parkm&\rightarrow\wa^{mh+1}\\
   [w,\J]&\mapsto w'\waf_R
 \end{align*}
 is a well-defined bijection. Here $w'$ is the unique representative of $wW_{\J}$ with $w'(\ind(\J))\subseteq\Phi^+$ and $R:=\phi^{-1}(\J)$ is the dominant $\m$-Shi region corresponding to $\J$.
 \begin{proof}
 
  The map $\Theta$ is well-defined, since if $[w_1,\J_1]=[w_2,\J_2]$ then $\J_1=\J_2$ and $w_1W_{\J_1}=w_2W_{\J_1}$, so $w_1'=w_2'$.
  Therefore $w_1'\waf_{R_1}=w_2'\waf_{R_2}$.\\
  \\
  To see that $w'\waf_R\in\wa^{mh+1}$, note that $w'(\waf_R(\ar_{mh+1})\cap\Phi)=w'(\ind(\J))\subseteq\Phi^+$ using Lemma \ref{indinv}.
  Thus $w'\waf_R(\ar_{mh+1})\subseteq\ar^+$ and therefore $w'\waf_R\in\wa^{mh+1}$.\\
  \\
  To see that $\pta$ is injective, suppose that $\pta([w_1,\J_1])=w_1'\waf_{R_1}=w_2'\waf_{R_2}=\pta([w_2,\J_2])$.
  Now $w_1'\waf_{R_1}\ac\subseteq w_1'C$ and $w_2'\waf_{R_2}\ac\subseteq w_2'C$, so $w_1'=w_2'$. Thus $\waf_{R_1}\ac=\waf_{R_2}\ac$ and therefore $\J_1=\J_2$.
  We also get that $w_1W_{R_1}=w_1'W_{R_1}=w_2'W_{R_1}=w_2W_{R_1}$, so $[w_1,\J_1]=[w_2,\J_2]$.\\
  \\
  To see that $\pta$ is surjective, note that if $\waf\in\wa^{mh+1}$, say with $\waf\ac\subseteq wC$, then by Lemma \ref{ratdom} we have $w^{-1}\waf_R\in\wa^{mh+1}_{\mathrm{dom}}$.
  Thus by Lemma \ref{shistable} $w^{-1}\waf\ac$ is the minimal alcove of a dominant $\m$-Shi region $\rd$ corresponding to some geometric chain of order filters $\J$.
  Furthermore 
  \[w(\ind(\J))=w(\waf_{\rd}(\ar_{mh+1})\cap\Phi)=\waf(\ar_{mh+1})\cap\Phi\subseteq\Phi^+\]
  using Lemma \ref{indinv} and that $\waf\in\wa^{mh+1}$.
  Thus $\pta([w,\rd])=w\waf_{\rd}=\waf$.
 \end{proof}

\end{theorem}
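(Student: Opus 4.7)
My plan is to prove the four required properties in order: well-definedness, containment in $\wa^{mh+1}$, injectivity, and surjectivity. The entire argument rests on the bridge supplied by Lemma \ref{indinv}, which identifies $\ind(\J)$ with $\waf_R(\ar_{mh+1}) \cap \Phi$, together with Theorem \ref{shistable} characterising $(mh+1)$-stable elements as minimal alcoves of the $m$-Shi arrangement, and Lemma \ref{ratdom} which lets us fold any $m$-Shi alcove into the dominant chamber.

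For well-definedness, if $[w_1, \J_1] = [w_2, \J_2]$, then $\J_1 = \J_2$ (call it $\J$) and $w_1 W_{\J} = w_2 W_{\J}$. The parabolic description $W_{\J} = u^{-1}W_I u$ with $I = u(\ind(\J)) \subseteq \simp$ guarantees that each left coset has a unique representative $w'$ with $w'(\ind(\J)) \subseteq \Phi^+$, so $w_1' = w_2'$ and both pairs yield the same image $w_1' \waf_{R_1} = w_2' \waf_{R_2}$. For the containment $w'\waf_R \in \wa^{mh+1}$, I would decompose any $\beta \in \waf_R(\ar_{mh+1})$ into cases: if $\beta$ lies in $\Phi$, then $\beta \in \ind(\J)$ by Lemma \ref{indinv}, so $w'(\beta) \in \Phi^+ \subseteq \ar^+$ by choice of $w'$; otherwise $\beta = \alpha + k\delta$ with $k \geq 1$, and $w' \in W$ preserves the set of such affine roots (since $W$ acts only on the $\alpha$-component), so $w'(\beta) \in \ar^+$ automatically. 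Since $\waf_R \in \wa^{mh+1}$ already ensures $\waf_R(\ar_{mh+1}) \subseteq \ar^+$, both cases give $w'\waf_R(\ar_{mh+1}) \subseteq \ar^+$.

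For injectivity, suppose $w_1' \waf_{R_1} = w_2' \waf_{R_2}$. Since each $\waf_{R_i}\ac$ is dominant, the alcove $w_i' \waf_{R_i}\ac$ lies in the chamber $w_i' C$, so the equality of alcoves forces $w_1' = w_2'$ and then $\waf_{R_1} = \waf_{R_2}$, hence $R_1 = R_2$ and $\J_1 = \J_2$. Then $w_1 W_{\J_1} = w_1' W_{\J_1} = w_2' W_{\J_2} = w_2 W_{\J_2}$, giving $[w_1, \J_1] = [w_2, \J_2]$. For surjectivity, take $\waf \in \wa^{mh+1}$ with $\waf\ac \subseteq wC$. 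Lemma \ref{ratdom} provides $w^{-1}\waf \in \wratdom$, and by Theorem \ref{shistable} this is the minimal alcove of a dominant $m$-Shi region $R$, corresponding to a geometric chain $\J = \phi(R)$. Applying Lemma \ref{indinv} and using that $\waf(\ar_{mh+1}) \subseteq \ar^+$ yields
\[
w(\ind(\J)) = w(\waf_R(\ar_{mh+1}) \cap \Phi) = \waf(\ar_{mh+1}) \cap \Phi \subseteq \Phi^+,
\]
so $w$ is already the distinguished representative $w'$ of its coset, and $\pta([w, \J]) = w \waf_R = \waf$.

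The main obstacle I anticipate is confirming that the distinguished coset representative $w'$ exists uniquely and behaves correctly on $\ind(\J)$; this is not automatic from the definition of $W_{\J}$ as a reflection subgroup, and requires invoking the fact (via \cite[Theorem~6.4]{sommers05stable} and \cite[Lemma~1]{thiel13hf}) that $\ind(\J)$ is an antichain of minimal elements of $J_m$ conjugate to a subset of $\simp$, so that $W_{\J}$ is genuinely a parabolic subgroup. Once this parabolic structure is in hand, the cosets have unique $\Phi^+$-preserving representatives and the rest of the argument is a straightforward chase through Lemma \ref{indinv}, Theorem \ref{shistable}, and Lemma \ref{ratdom}.
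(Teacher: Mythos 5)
Your proof is correct and follows essentially the same route as the paper's, leaning on Lemma \ref{indinv}, Theorem \ref{shistable}, and Lemma \ref{ratdom} in the same way for each of the four steps. The only slight elaboration is that you spell out the case split (affine roots in $\Phi$ versus those with $\delta$-coefficient at least $1$) when verifying $w'\waf_R(\ar_{mh+1})\subseteq\ar^+$, a detail the paper leaves implicit; this is a harmless and arguably clarifying addition.
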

\noindent
A similar bijection using ceilings instead of floors was given for the special case where $\m=1$ in \cite[Proposition 10.3]{armstrong12parking}.
Note that the proof furnishes a description of $\pta^{-1}$: we have $\pta^{-1}(\waf)=[w,\rd]$ where $\waf\ac\in wC$ and $\rd$ is the $m$-Shi region containing $w^{-1}\waf\ac$.
\subsection{$\m$-nonnesting parking functions and the finite torus}
In \cite[Proposition 9.9]{rhoades12parking} it is shown that there is a $W$-set isomorphism\footnote{Rhoades mistakenly writes the root lattice $Q$ in place of the coroot lattice $\Q$. However, his result still stands as written: it turns out that $Q/(mh+1)Q$ and $\Q/(mh+1)\Q$ are isomorphic as $W$-sets.}, that is a bijection that commutes with the action of $W$, from $\parkm$ to $\Q/(mh+1)\Q$.
The following theorem makes this isomorphism explicit.
\begin{theorem}\label{pttt}
 The map 
 \begin{align*}
  \ptt:\parkm&\rightarrow\Q/(mh+1)\Q\\
  [w,\J]&\mapsto w\waf_R\waf_{mh+1}^{-1}\cdot0+(mh+1)\Q,
 \end{align*}
where $R$ is the dominant $\m$-Shi region corresponding to $\J$, is a $W$-set isomorphism. In addition, we have $\ptt=\A\circ\pta$.
\begin{proof}
 We will first show that $\ptt=\A\circ\pta$.
 First note that by Theorem \ref{stab} and Lemma \ref{indinv} we have
 \begin{align*}
  &\mathsf{Stab}(\waf_R\waf_{mh+1}^{-1}\cdot0+(mh+1)\Q)\\
  &=\mathsf{Stab}(\A(\waf))\\
  &=\langle\{s_{\beta}:\beta\in\waf(\ar_{mh+1})\cap\Phi\}\rangle\\
  &=\langle\{s_{\beta}:\beta\in\ind(\J)\}\rangle\\
  &=W_{\J}
 \end{align*}
 Let $w'$ be the unique element of $wW_{\J}$ with $w'(\ind(\J))\subseteq\Phi^+$. We calculate that 
 \begin{align*}
  \ptt([w,\J])&=w\waf_R\waf_{mh+1}^{-1}\cdot0+(mh+1)\Q\\
  &=w'\waf_R\waf_{mh+1}^{-1}\cdot0+(mh+1)\Q\\
  &=\A(w'\waf_R\ac)\\
  &=\A(\pta([w,\J])),
 \end{align*}
 using that
 $w^{-1}w'\in W_{\J}=\mathrm{Stab}(\waf_R\waf_{mh+1}^{-1}\cdot0+(mh+1)\Q)$. 
 So $\ptt=\A\circ\pta$ is a well-defined bijection. Since for $u\in W$ we have
 \begin{align*}
  \ptt(u\cdot[w,\J])&=\ptt([uw,\J])\\
  &=uw\waf_R\waf_{mh+1}^{-1}\cdot0+(mh+1)\Q\\
  &=u\cdot\ptt([w,\J])
 \end{align*}
 we see that $\ptt$ is a $W$-set isomorphism.
 \end{proof}
 \end{theorem}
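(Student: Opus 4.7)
My plan is to prove the identity $\ptt = \A \circ \pta$ first and derive all other properties of $\ptt$ from it. Since $\A$ is a bijection (established earlier) and $\pta$ is a bijection (Theorem \ref{pta}), this will immediately give that $\ptt$ is a well-defined bijection, and $W$-equivariance will then be a direct computation.

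The key observation is that the formula defining $\ptt$ depends on the representative $w$ of the coset $wW_{\J}$, while $\pta$ picks the distinguished representative $w'$ with $w'(\ind(\J)) \subseteq \Phi^+$. So the content of the identity is that replacing $w$ by $w'$ does not change the image in $\Q/(mh+1)\Q$. To see this, I would compute the stabilizer of $\waf_R\waf_{mh+1}^{-1}\cdot 0 + (mh+1)\Q = \A(\waf_R)$ under the natural $W$-action on the finite torus. By Theorem \ref{stab}, this stabilizer is generated by $\{s_{\beta} : \beta \in \waf_R(\ar_{mh+1}) \cap \Phi\}$, and by Lemma \ref{indinv} this set equals $\{s_{\beta} : \beta \in \ind(\J)\}$, whose group of generators is exactly $W_{\J}$. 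Thus $w \cdot \A(\waf_R) = w' \cdot \A(\waf_R)$ whenever $w^{-1}w' \in W_{\J}$, which shows both that the formula $w\waf_R\waf_{mh+1}^{-1}\cdot 0 + (mh+1)\Q$ is constant on the equivalence class $[w,\J]$ (so $\ptt$ is well-defined), and that
\[
\ptt([w,\J]) = w'\waf_R\waf_{mh+1}^{-1}\cdot 0 + (mh+1)\Q = \A(w'\waf_R) = \A(\pta([w,\J])).
\]

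Having established $\ptt = \A \circ \pta$, the bijection property is immediate. For $W$-equivariance, I compute directly: for $u \in W$,
\[
\ptt(u \cdot [w,\J]) = \ptt([uw,\J]) = uw\waf_R\waf_{mh+1}^{-1}\cdot 0 + (mh+1)\Q = u \cdot \ptt([w,\J]),
\]
since the $W$-action on $\Q/(mh+1)\Q$ is the linear one and $\waf_R\waf_{mh+1}^{-1}\cdot 0 \in \Q$.

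The main obstacle — really the only nontrivial step — is the stabilizer identification, and this is where the preceding technical work pays off: Theorem \ref{stab} was the somewhat involved result about $\A$ whose role only becomes apparent here, and Lemma \ref{indinv} translates $\ind(\J)$ (the combinatorial description attached to the chain of filters) into $\waf_R(\ar_{mh+1}) \cap \Phi$ (the affine-root description that Theorem \ref{stab} produces). Once these two descriptions are matched, the proof is essentially a one-line calculation, and no further case analysis or reduction is needed.
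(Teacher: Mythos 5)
Your proposal is correct and follows essentially the same route as the paper: first establish $\ptt=\A\circ\pta$ by identifying $\mathsf{Stab}(\A(\waf_R))$ with $W_{\J}$ via Theorem \ref{stab} and Lemma \ref{indinv}, then deduce well-definedness and bijectivity from the known bijections $\A$ and $\pta$, and finally check $W$-equivariance by a direct computation. If anything, you are slightly more careful than the paper's write-up in consistently writing $\A(\waf_R)$ (the paper briefly writes $\A(\waf)$ in the stabilizer chain, which reads as a typo for $\waf_R$).
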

\noindent
We define the \defn{zeta map} as $\zeta:=\ptt^{-1}=\pta^{-1}\circ\A^{-1}$.
\begin{theorem}\label{zeta}
 The map $\zeta$ is a $W$-set isomorphism from $\Q/(mh+1)\Q$ to $\parkm$.
\end{theorem}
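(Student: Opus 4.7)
The plan is to obtain Theorem \ref{zeta} as an essentially immediate consequence of Theorem \ref{pttt}. By definition $\zeta = \ptt^{-1}$, and Theorem \ref{pttt} already establishes that $\ptt \colon \parkm \to \Q/(mh+1)\Q$ is a $W$-set isomorphism. So the only thing to check is that the inverse of a $W$-equivariant bijection is itself $W$-equivariant, which is a standard verification.

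Concretely, I would first note that $\ptt$ is a bijection (this was shown in the proof of Theorem \ref{pttt} via the factorisation $\ptt = \A \circ \pta$, where $\A$ is a bijection by the theorem in Section \ref{and} and $\pta$ is a bijection by Theorem \ref{pta}). Therefore $\zeta = \ptt^{-1}$ is a well-defined bijection from $\Q/(mh+1)\Q$ to $\parkm$. For $W$-equivariance, given $u \in W$ and $\mu + (mh+1)\Q \in \Q/(mh+1)\Q$, let $[w,\J] := \zeta(\mu + (mh+1)\Q)$, so that $\ptt([w,\J]) = \mu + (mh+1)\Q$. Then by the $W$-equivariance of $\ptt$ we have
\[
\ptt(u \cdot [w,\J]) = u \cdot \ptt([w,\J]) = u \cdot (\mu + (mh+1)\Q),
\]
and applying $\zeta$ to both sides yields
\[
u \cdot \zeta(\mu + (mh+1)\Q) = u \cdot [w,\J] = \zeta(u \cdot (\mu + (mh+1)\Q)),
\]
as required.

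There is no real obstacle here; the theorem is a restatement of Theorem \ref{pttt} after inverting. If anything, the only point deserving mention is that the factorisation $\zeta = \pta^{-1} \circ \A^{-1}$ gives a constructive description: to compute $\zeta(\mu + (mh+1)\Q)$, first use $\A^{-1}$ to recover the $(mh+1)$-stable affine Weyl group element $\waf \in \wa^{mh+1}$, then apply $\pta^{-1}$, which (as noted after Theorem \ref{pta}) returns $[w,\J]$ where $w \in W$ is determined by $\waf\ac \subseteq wC$ and $\J = \phi(R)$ for $R$ the unique dominant $\m$-Shi region containing $w^{-1}\waf\ac$.
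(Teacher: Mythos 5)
Your proposal is correct and matches the paper exactly: Theorem \ref{zeta} is stated in the paper without separate proof precisely because it is an immediate corollary of Theorem \ref{pttt}, obtained by inverting the $W$-set isomorphism $\ptt$. The verification that the inverse of a $W$-equivariant bijection is $W$-equivariant is the routine argument you give.
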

\section{The uniform zeta map and the combinatorial zeta map}\label{zeta+zeta}
Our aim for this section is to relate our zeta map $\zeta$ from Theorem \ref{zeta} to the combinatorial zeta map $\zeta_{HL}$ of Haglund and Loehr introduced in Section \ref{zetahl}.
So we let $\Phi$ be the root system of type $A_{n-1}$ and set $m=1$.
The content of this section may be captured in the following commutative diagram of bijections:
\begin{center}
\begin{tikzpicture}
  \matrix (m) [matrix of math nodes,row sep=3em,column sep=6em,minimum width=2em]
  {
     \park & \widetilde{W}^{n+1}=\widetilde{S}_n^{n+1} & \Q/(n+1)\Q\\
     \D_n & {} & \pf_n\\};
  \path[-stealth]
    (m-1-1) edge node [left] {$\ptd$} (m-2-1)
    (m-1-1.east|-m-1-2) edge node [above] {$\pta$} (m-1-2)
    (m-2-3.west|-m-2-1) edge node [above] {$\zeta_{HL}$} (m-2-1)
    (m-1-2) edge node [left] {$\A_{GMV}$} (m-2-3)
            edge node [above] {$\atd$} (m-2-1)
    (m-1-2.east|-m-1-3) edge node [above] {$\A$} (m-1-3)
    (m-1-3) edge node [right] {$\ttpf$} (m-2-3)
    (m-1-3) edge[bend right] node [above] {$\zeta$} (m-1-1);
\end{tikzpicture} 
\end{center}
The first thing we need to do is introduce the bijections $\ptd$ and $\atd$ that relate $\park$ and $\widetilde{S}_n^{n+1}$ to the set $\D_n$ of diagonally labelled Dyck paths of length $n$.
\subsection{Nonnesting parking functions as diagonally labelled Dyck paths}
It is well-known that one can encode the Shi regions of type $A_{n-1}$ as diagonally labelled Dyck paths \cite[Theorem 3]{armstrong13hyper}.
We take a slightly different approach, and instead view diagonally labelled Dyck paths as encoding nonnesting parking functions.\\
\\
Consider the part of the integer grid $\Z^2$ with $0\leq x,y\leq n$.
We think of the boxes above the diagonal $x=y$ as corresponding to the roots in $\Phi^+$.
Say $[i,j]$ is the box whose top right corner is the lattice point $(i,j)$.
If $i<j$ we view $[i,j]$ as corresponding to the root $e_i-e_j\in\Phi^+$.
So we have $\alpha\leq\beta$ in the root poset if and only if the box corresponding to $\alpha$ is weakly to the right and weakly below the box corresponding to $\beta$.\\
\begin{figure}[h]
\begin{center}
\begin{tikzpicture}[scale=1.1]
\begin{scope}
	\draw[gray] (0,0) grid (6,6);
	\draw[very thick] (0,0)--(0,1)--(1,1)--(1,4)--(3,4)--(3,6)--(6,6);
	\draw[xshift=5mm,yshift=5mm]
		(0,1) node{$e_1-e_2$}
		(0,2) node{$e_1-e_3$}
		(0,3) node{$e_1-e_4$}
		(0,4) node{$e_1-e_5$}
		(0,5) node{$e_1-e_6$}
		(1,2) node{$e_2-e_3$}
		(1,3) node{$e_2-e_4$}
		(1,4) node{$e_2-e_5$}
		(1,5) node{$e_2-e_6$}
		(2,3) node{$e_3-e_4$}
		(2,4) node{$e_3-e_5$}
		(2,5) node{$e_3-e_6$}
		(3,4) node{$e_4-e_5$}
		(3,5) node{$e_4-e_6$}
		(4,5) node{$e_5-e_6$};
		
\end{scope}
\end{tikzpicture}
\caption{The Dyck path $D(J)$ corresponding to the order filter $J$ in the root poset of type $A_5$ whose minimal elements are $e_1-e_2$ and $e_3-e_5$.}
\end{center}
\end{figure}
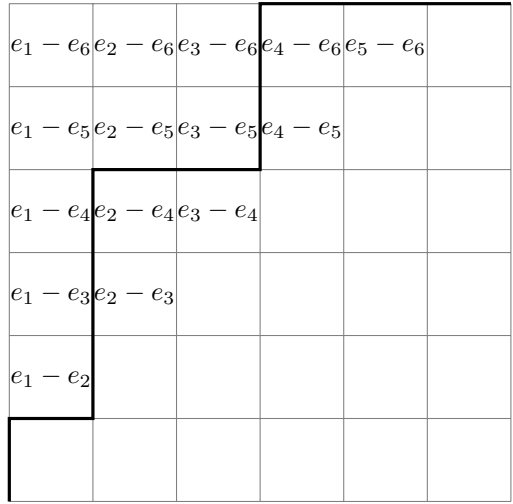
\\
Note that a geometric chain $\J$ of $1$ order filter in the root poset is just a single order filter $J$.
The Dyck path $D(J)$ corresponding to the order filter $J$ is the Dyck path which satisfies
\[\text{The box $[i,j]$ is above $D(J)$ if and only if $e_i-e_j\in J$}.\]
Now $\ind(J)$ is exactly the set of minimal elements of $J$ \cite{athanasiadis05refinement}. Thus we have $e_i-e_j\in\ind(J)$ if and only if $(i,j)$ is a valley of $D(J)$.\\
\\
Take $w\in W$. We have
\begin{align*}
 &w(\ind(J))\subseteq\Phi^+\\
 &\Leftrightarrow w(e_i-e_j)\in\Phi^+\text{ whenever $(i,j)$ is a valley of }D(J)\\
 &\Leftrightarrow w(i)<w(j)\text{ whenever $(i,j)$ is a valley of }D(J)\\
 &\Leftrightarrow (w,D(J))\text{ is a diagonally labelled Dyck path}.
\end{align*}
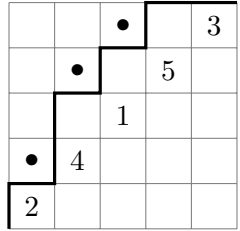
\begin{figure}[h]
\begin{center}
\begin{tikzpicture}[scale=.6]
\begin{scope}
	\draw[gray] (0,0) grid (5,5);
	\draw[very thick] (0,0)--(0,1)--(1,1)--(1,3)--(2,3)--(2,4)--(3,4)--(3,5)--(5,5);
	\draw[xshift=5mm,yshift=5mm]
		(0,0) node{\large{$2$}}
		(1,1) node{\large{$4$}}
		(2,2) node{\large{$1$}}
		(3,3) node{\large{$5$}}
		(4,4) node{\large{$3$}}
		(0,1) node{\large{$\bullet$}}
		(1,3) node{\large{$\bullet$}}
		(2,4) node{\large{$\bullet$}};
\end{scope}
\end{tikzpicture}
\caption{The diagonally labelled Dyck path $(w,D(J))$ where $w=24153$ and $\ind(J)=\{e_1-e_2,e_2-e_4,e_3-e_5\}$. The valleys of $D(J)$ are marked by dots.}
\end{center}
\end{figure}
\begin{lemma}
 The map 
 \begin{align*}
  \ptd:\park&\rightarrow\D_n\\
  [w,J]&\mapsto (w',D(J)),
 \end{align*}
 where $w'\in wW_J$ is the unique representative with $w'(\ind(J))\subseteq\Phi^+$, is a bijection.
 \begin{proof}
  The map $J\mapsto D(J)$ is a bijection from order filters in the root poset of type $A_{n-1}$ to Dyck paths of length $n$. 
  The map $\ptd$ is well-defined since $[w_1,J_1]=[w_2,J_2]$ implies $J_1=J_2$ and $w_1'=w_2'$, so $\ptd([w_1,J_1])=\ptd([w_2,J_2])$.
  We see that $(w',D(J))\in\D_n$ since $w'(\ind(J))\subseteq\Phi^+$. We see that $\ptd$ is injective since $\ptd([w_1,J_1])=\ptd([w_2,J_2])$ implies $D(J_1)=D(J_2)$, so that $J_1=J_2$.
  Furthermore $w_1'=w_2'$, so that $w_1W_{J_1}=w_2W_{J_2}$ and thus $[w_1,J_1]=[w_2,J_2]$.
  We see that $\ptd$ is surjective since for $(w,D)\in\D_n$ we have $(w,D)=\ptd([w,J])$ where $D=D(J)$.
 \end{proof}
\end{lemma}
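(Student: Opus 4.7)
The plan is to verify the bijection by checking well-definedness, codomain membership, injectivity and surjectivity separately, leveraging two facts established just before the statement. First, the map $J\mapsto D(J)$ is a bijection between order filters in the root poset of type $A_{n-1}$ and Dyck paths of length $n$; under this bijection, $(i,j)$ is a valley of $D(J)$ precisely when $e_i-e_j$ is a minimal element of $J$, i.e.\ an element of $\ind(J)$. Second, the chain of equivalences worked out right above the statement gives that $(w,D(J))$ is a diagonally labelled Dyck path if and only if $w(\ind(J))\subseteq\Phi^+$. Combined with the fact (recalled in Section~\ref{nnpark}) that each coset $wW_J$ contains a unique representative $w'$ with $w'(\ind(J))\subseteq\Phi^+$, these give essentially all of the content.

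I would first establish well-definedness: if $[w_1,J_1]=[w_2,J_2]$ then by definition of $\sim$ we have $J_1=J_2=:J$ and $w_1W_J=w_2W_J$, and since the distinguished representative of a coset is unique we get $w_1'=w_2'$; hence $\ptd([w_1,J_1])=\ptd([w_2,J_2])$. By construction $w'(\ind(J))\subseteq\Phi^+$, so the second characterization above ensures $(w',D(J))\in\D_n$.

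For injectivity, suppose $\ptd([w_1,J_1])=\ptd([w_2,J_2])$, so $(w_1',D(J_1))=(w_2',D(J_2))$. Injectivity of $J\mapsto D(J)$ forces $J_1=J_2=:J$, and then $w_1'=w_2'$ gives $w_1W_J=w_1'W_J=w_2'W_J=w_2W_J$, so the equivalence classes coincide. For surjectivity, start from $(w,D)\in\D_n$ and let $J$ be the unique order filter with $D(J)=D$. The valley condition on $(w,D)$ is exactly $w(\ind(J))\subseteq\Phi^+$, so $w$ already is the distinguished representative of $wW_J$, and therefore $\ptd([w,J])=(w,D(J))=(w,D)$.

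There is no real obstacle here: once the two bijective correspondences (order filters $\leftrightarrow$ Dyck paths, and coset $\leftrightarrow$ distinguished positive representative) are in place, and once the valley condition has been translated into the positivity statement $w(\ind(J))\subseteq\Phi^+$, the four verifications reduce to bookkeeping. The only point that deserves a moment's care is the classical identification of valleys with minimal elements of $J$, which rests on noting that $e_i-e_j$ being minimal in $J$ means no $[i',j']$ with $i\leq i'<j'\leq j$ and $(i',j')\neq(i,j)$ lies above the path, which is precisely the geometric description of a valley at $(i,j)$.
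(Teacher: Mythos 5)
Your proof is correct and follows essentially the same route as the paper: you verify well-definedness, membership in $\D_n$, injectivity, and surjectivity, each time appealing to the bijection $J\mapsto D(J)$ and the uniqueness of the coset representative $w'$ with $w'(\ind(J))\subseteq\Phi^+$. The additional commentary you give on translating valleys to minimal elements of $J$ merely makes explicit what the paper sets up in the discussion immediately preceding the lemma.
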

\noindent
Since $W_J$ is generated by the transpositions $(ij)$ such that $(i,j)$ is a valley of $D(J)$ and the condition $w'(\ind(J))\subseteq\Phi^+$ is equivalent to $w'(i)<w'(j)$ whenever $(i,j)$ is a valley of $D(J)$ we can get $w'$ from $w$ with a simple sorting procedure:
for all maximal chains of indices $i_1<i_2<\ldots<i_l$ such that $(i_j,i_{j+1})$ is a valley of $D(J)$ for all $j\in[l-1]$ sort the values of $w$ on positions $i_1,i_2,\ldots,i_l$ increasingly.
The result is $w'$. From this we also get the $S_n$-action on $\D_n$ that turns $\ptd$ into an $S_n$-isomorphism: for $u\in S_n$ define 
$$u\cdot(w,D):=((uw)',D)$$
where $(uw)'$ arises from $uw$ through the sorting procedure desribed above. Note the analogy between this action and the $S_n$-action on $\pf_n$ in terms of vertically labelled Dyck paths that was described in Section \ref{vert}.\\
\\
One may also view diagonally labelled Dyck paths as a combinatorial model for Shi alcoves, or equivalently $(n+1)$-stable affine permutations. The following lemma makes this explicit.
\begin{lemma}\label{atd}
 The map
 \begin{align*}
  \atd:\widetilde{S}_n^{n+1}&\rightarrow\D_n\\
  \waf&\mapsto(w,D),
 \end{align*}
 where $\waf\ac\in wC$ and $D=D(J)=D(\phi(\rd))$ is the Dyck path corresponding to the order filter corresponding to the dominant Shi region $\rd$ containing $w^{-1}\waf\ac$, is a bijection. Furthermore $\atd=\ptd\circ\pta^{-1}$.
 \begin{proof}
  An immediate check from the definitions of $\ptd$ and $\pta$.
 \end{proof}

\end{lemma}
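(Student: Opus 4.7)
The plan is to prove the second assertion $\atd=\ptd\circ\pta^{-1}$ first, and then deduce bijectivity of $\atd$ for free from the bijectivity of $\ptd$ and $\pta$ (established in the preceding lemma and in Theorem \ref{pta} respectively).

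To compute $\pta^{-1}(\waf)$ for $\waf\in\widetilde{S}_n^{n+1}=\wa^{n+1}$, I would invoke the explicit description already recorded after the proof of Theorem \ref{pta}: if $w\in W$ is the unique element with $\waf\ac\subseteq wC$, then by Lemma \ref{ratdom} the element $w^{-1}\waf$ is dominant and $(n+1)$-stable, so by Theorem \ref{shistable} its alcove $w^{-1}\waf\ac$ is the minimal alcove $\waf_{\rd}\ac$ of a (unique) dominant Shi region $\rd$; and then $\pta^{-1}(\waf)=[w,\phi(\rd)]$.

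The next, and key, step is to observe that the representative $w$ arising this way is already the one selected by $\ptd$ inside the coset $wW_{\phi(\rd)}$. Indeed, Lemma \ref{indinv} gives
\[w(\ind(\phi(\rd)))=w(\waf_{\rd}(\ar_{n+1})\cap\Phi)=\waf(\ar_{n+1})\cap\Phi\subseteq\Phi^+,\]
the last inclusion because $\waf\in\wa^{n+1}$ maps $\ar_{n+1}$ into $\ar^+$. Hence applying $\ptd$ to $[w,\phi(\rd)]$ requires no resorting and produces exactly the pair $(w,D(\phi(\rd)))$, which by definition equals $\atd(\waf)$. Thus $\atd=\ptd\circ\pta^{-1}$ as claimed. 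Bijectivity of $\atd$ is then automatic.

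I do not expect a real obstacle here: everything needed has already been set up, and the verification is essentially an unraveling of the definitions plus one appeal to Lemma \ref{indinv}. The only point requiring a modicum of care is checking that the $w$ used to define $\atd$ and the $w'$ used in $\ptd$ coincide on this particular coset — that is where the $(n+1)$-stability of $\waf$ enters, via the identity above.
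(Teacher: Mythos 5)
Your proof is correct and is precisely the ``immediate check'' the paper alludes to: unravel $\pta^{-1}$ via the description recorded after Theorem \ref{pta}, then observe via Lemma \ref{indinv} and the $(n+1)$-stability of $\waf$ that $w(\ind(\phi(\rd)))\subseteq\Phi^+$, so the representative $w$ is already the distinguished one $\ptd$ selects. This matches the paper's intent; note the same identity $w(\ind(\J))\subseteq\Phi^+$ already appears in the surjectivity argument of Theorem \ref{pta}, which is the place the paper expects the reader to draw on.
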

\noindent
\subsection{The zeta maps are equivalent}
The following theorem relates our zeta map $\zeta$ from Theorem \ref{zeta} to the zeta map $\zeta_{HL}$ of Haglund and Loehr. Recall that $\ttpf=\pi_{\pf}^{-1}\circ\pi_{\Q}$ is the natural $S_n$-isomorphism from $\Q/(n+1)\Q$ to $\pf_n$.
\begin{theorem}
 If $\Phi$ is of type $A_{n-1}$ and $m=1$, then 
 \begin{align*}
  \zeta_{HL}&=\ptd\circ\zeta\circ\ttpf^{-1}\\
  &=\atd\circ\A_{GMV}^{-1}.
 \end{align*}
 \begin{proof}
  Define $\zeta':=\ptd\circ\zeta\circ\ttpf^{-1}$.
  We also have 
  $$\zeta'=\ptd\circ\zeta\circ\ttpf^{-1}=\ptd\circ\pta^{-1}\circ\A^{-1}\circ\ttpf^{-1}=\atd\circ\A_{GMV}^{-1}$$
  using the definition of $\zeta$, Lemma \ref{atd} and Theorem \ref{GMV}. We will show that $\zeta'$ satisfies the following properties:
  If $\zeta'(P,\sigma)=(w,D)$ then firstly $w=\drw(P,\sigma)$ and secondly $(w,D)$ has a valley labelled $(b,c)$ if and only if $(P,\sigma)$ has a rise labelled $(b,c)$ for all $b$ and $c$.
  As noted as the end of Section \ref{zetahl}, these properties define $\zeta_{HL}$ uniquely, so we deduce that $\zeta'=\zeta_{HL}$.\\
  \\
  First we need to check that if $\zeta'(P,\sigma)=(\atd\circ\A_{GMV}^{-1})(P,\sigma)=(w,D)$, then $w=\drw(P,\sigma)$.
  Equivalently we need to verify that if $\waf\in\wa^{n+1}$ with $\waf\ac\subseteq wC$ and $\A_{GMV}(\waf)=(P_{w_R},\sigma)$ then $w=\drw(P_{w_R},\sigma)$.\\
\begin{figure}[h]
\begin{center}
\begin{tikzpicture}[scale=.7]
\begin{scope}
	\draw[gray] (0,0) grid (7,6);
	\draw[very thick] (0,0)--(0,2)--(1,2)--(1,3)--(2,3)--(2,5)--(5,5)--(5,6)--(7,6);
	\draw (0,0)--(7,6);
	\draw[xshift=5mm,yshift=5mm]
		(0,0) node{\large{-6}}
		(1,0) node{\large{-12}}
		(2,0) node{\large{-18}}
		(3,0) node{\large{-24}}
		(4,0) node{\large{-30}}
		(5,0) node{\large{-36}}
		(6,0) node{\large{-42}}
		(0,1) node[fill=gray!40,circle]{\large{1}}
		(1,1) node{\large{-5}}
		(2,1) node{\large{-11}}
		(3,1) node{\large{-17}}
		(4,1) node{\large{-23}}
		(5,1) node{\large{-29}}
		(6,1) node{\large{-35}}
		(0,2) node{\large{8}}
		(1,2) node[fill=gray!40,circle]{\large{2}}
		(2,2) node{\large{-4}}
		(3,2) node{\large{-10}}
		(4,2) node{\large{-16}}
		(5,2) node{\large{-22}}
		(6,2) node{\large{-28}}
		(0,3) node{\large{15}}
		(1,3) node{\large{9}}
		(2,3) node[fill=gray!40,circle]{\large{3}}
		(3,3) node{\large{-3}}
		(4,3) node{\large{-9}}
		(5,3) node{\large{-15}}
		(6,3) node{\large{-21}}
		(0,4) node{\large{22}}
		(1,4) node{\large{16}}
		(2,4) node[fill=gray!40,circle,scale=0.78]{\large{10}}
		(3,4) node[fill=gray!40,circle]{\large{4}}
		(4,4) node{\large{-2}}
		(5,4) node{\large{-8}}
		(6,4) node{\large{-14}}
		(0,5) node{\large{29}}
		(1,5) node{\large{23}}
		(2,5) node{\large{17}}
		(3,5) node{\large{11}}
		(4,5) node{\large{5}}
		(5,5) node{\large{-1}}
		(6,5) node{\large{-7}};
	\draw[xshift=5mm,yshift=5mm]
		(-1,0) node[color=blue,]{\large{1}}
		(-1,1) node[color=blue,]{\large{3}}
		(-1,2) node[color=blue,]{\large{4}}
		(-1,3) node[color=blue,]{\large{5}}
		(-1,4) node[color=blue,]{\large{6}}
		(-1,5) node[color=blue,]{\large{2}};
\end{scope}
\end{tikzpicture}
\caption{The vertically labelled $7/6$-Dyck path $\A_{GMV}(\waf)$ for the dominant $7$-stable affine permutation $\waf=[2,7,3,4,5,0]$. We have that $\waf^{-1}=[-4,1,3,4,5,12]$ is affine Grassmanian.
The positive beads of the normalized abacus $\mathsf{A}(\widetilde{\Delta}_{\waf})$ are shaded in gray.}
\end{center}
\end{figure}
  \\
  First suppose that $w=e$ is the identity. That is, $\waf\in\wa^{n+1}_{\mathrm{dom}}$. So $\waf^{-1}$ is \defn{affine Grassmanian}, that is $\waf^{-1}(1)<\waf^{-1}(2)<\ldots<\waf^{-1}(n)$.
  The set of smallest gaps on the runners of the balanced abacus $\mathsf{A}(\Delta_{\waf})$ is 
  $$\{\waf^{-1}(1),\waf^{-1}(2),\ldots,\waf^{-1}(n)\}.$$
  Thus the set of smallest gaps of the normalized abacus $\mathsf{A}(\widetilde{\Delta}_{\waf})$ is 
  $$\{\waf^{-1}(1)-M_{\waf},\waf^{-1}(2)-M_{\waf},\ldots,\waf^{-1}(n)-M_{\waf}\},$$
  where $M_{\waf}$ is the minimal element of $\Delta_{\waf}$.
  This equals the set $\{l_1,l_2,\ldots,l_n\}$ of labels of the boxes to the left of the North steps of the Dyck path $P_{\waf}$.\\
  \\
  Let $(a_1,a_2,\ldots,a_n)$ be the area vector of $P_{\waf}$. Then we have $l_i=na_i+i-1$. Thus $l_i<l_j$ if and only if either $a_i<a_j$ or $a_i=a_j$ and $i<j$. Furthermore, the label of the $i$-th North step of $P_{\waf}$ is $\sigma(i)=\waf(l_i+M_{\waf})$.
  So the $j$-th label being read in the diagonal reading word is $\drw(P_{\waf},\sigma)(j)=\waf(l_i+M_{\waf})$, where $l_i$ is the $j$-th smallest element of $\{l_1,l_2,\ldots,l_n\}$.
  But the $j$-th smallest element of 
  $$\{l_1,l_2,\ldots,l_n\}=\{\waf^{-1}(1)-M_{\waf},\waf^{-1}(2)-M_{\waf},\ldots,\waf^{-1}(n)-M_{\waf}\}$$
  is just $\waf^{-1}(j)-M_{\waf}$, so $\drw(P_{\waf},\sigma)(j)=\waf(\waf^{-1}(j)-M_{\waf}+M_{\waf})=j$.
  Thus $\drw(P_{\waf},\sigma)=e$, as required.\\
  \\
  In general if $\waf\ac\subseteq wC$ then $\waf=w\waf_D$, where $\waf_D\in\wa^{n+1}_{\mathrm{dom}}$ using Lemma \ref{ratdom}.
  We have $\Delta_{\waf_D}=\Delta_{\waf}$ and thus also $M_{\waf_D}=M_{\waf}$ and $\widetilde{\Delta}_{\waf_D}=\widetilde{\Delta}_{\waf}$.
  Therefore $P_{\waf}=P_{\waf_D}$ and the tuple $(l_1,l_2,\ldots,l_n)$ is also the same for $\waf$ and $\waf_D$.
  Thus the $j$-th label being read in the diagonal reading word of $\A_{GMV}(\waf)=(P_{\waf},\sigma)$ is 
  $$\drw(P_{\waf},\sigma)(j)=\waf(\waf_D^{-1}(j)-M_{\waf}+M_{\waf})=w(j).$$
  So $\drw(P_{\waf},\sigma)=w$, as required.\\
  \\
  The second property we need to check is that if $\zeta'(P,\sigma)=(\ptd\circ\zeta\circ\ttpf^{-1})(P,\sigma)=(w,D)$ then $(w,D)$ has a valley labelled $(a,b)$ if and only if $(P,\sigma)$ has a rise labelled $(a,b)$.
  But this follows from general considerations: since $\zeta'=\ptd\circ\zeta\circ\ttpf^{-1}$ is a composition of $S_n$-isomorphisms it is itself an $S_n$-isomorphism.
  In particular, the $S_n$-stabilizers of $(P,\sigma)$ and $(w,D)$ must agree. But $(P,\sigma)$ has a rise labelled $(a,b)$ if and only if $b$ is the smallest integer with $a<b\leq n$ such that the transposition $(ab)$ fixes $(P,\sigma)$,
  and similarly $(w,D)$ has a valley labelled $(a,b)$ if and only if $b$ is the smallest integer with $a<b\leq n$ such that the transposition $(ab)$ fixes $(w,D)$.
  Thus $(w,D)$ has a valley labelled $(a,b)$ if and only if $(P,\sigma)$ has a rise labelled $(a,b)$.
  This concludes the proof.
 \end{proof}

\end{theorem}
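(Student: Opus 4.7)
The plan has two parts. The first is purely formal: the identity $\ptd \circ \zeta \circ \ttpf^{-1} = \atd \circ \A_{GMV}^{-1}$ follows immediately by expanding $\zeta = \pta^{-1} \circ \A^{-1}$, substituting $\atd = \ptd \circ \pta^{-1}$ from Lemma \ref{atd}, and using $\ttpf \circ \A = \A_{GMV}$ from Theorem \ref{GMV}. The substantial part is to show that $\zeta' := \atd \circ \A_{GMV}^{-1}$ equals $\zeta_{HL}$.

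For this I would use the characterization observed in the remark after Theorem \ref{armzeta}: $\zeta_{HL}$ is uniquely determined by the two properties that, writing $(w,D) = \zeta_{HL}(P,\sigma)$, \textbf{(a)} $w = \drw(P,\sigma)$, and \textbf{(b)} $(w,D)$ has a valley labelled $(b,c)$ if and only if $(P,\sigma)$ has a rise labelled $(b,c)$. So it suffices to verify (a) and (b) for $\zeta'$.

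Property (b) I would get essentially for free from $S_n$-equivariance. Each constituent of $\zeta'$ is an $S_n$-equivariant bijection, so the $S_n$-stabilizer of $(P,\sigma)$ coincides with that of $(w,D) = \zeta'(P,\sigma)$; both are parabolic subgroups of $S_n$. Then $(P,\sigma)$ has a rise labelled $(b,c)$ iff $c$ is the smallest integer greater than $b$ for which the transposition $(b\,c)$ lies in the stabilizer of $(P,\sigma)$, and analogously for valleys of $(w,D)$. Matching these conditions yields (b) at once.

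Property (a) is the main obstacle and requires an explicit combinatorial calculation via the abacus model. I would first handle the dominant case: if $\waf \in \wa^{n+1}$ with $\waf\ac \subseteq C$, then $\waf^{-1}$ is affine Grassmanian, so the smallest gaps on the runners of the balanced abacus $\mathsf{A}(\Delta_\waf)$ are $\waf^{-1}(1) < \cdots < \waf^{-1}(n)$. Translating through the construction of $\A_{GMV}$ in Section \ref{andgmv}, the box labels $l_i$ adjacent to the North steps of $P_\waf$ are precisely $\waf^{-1}(j) - M_\waf$ sorted first by area and then by row index, so reading $\sigma(i) = \waf(l_i + M_\waf)$ in area order recovers the identity and $\drw(P_\waf,\sigma) = e$. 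For a general $\waf \in \wa^{n+1}$ with $\waf\ac \subseteq wC$, I would factor $\waf = w\waf_D$ with $\waf_D$ dominant via Lemma \ref{ratdom}; since $w \in W \subset \wa$ fixes $\Z_{>0}$ setwise, $\Delta_\waf = \Delta_{\waf_D}$, so $P_\waf = P_{\waf_D}$ and the labels $l_i$ agree, while $\sigma(i) = \waf(l_i+M_\waf) = w(\waf_D(l_i+M_\waf))$, giving $\drw(P_\waf,\sigma) = w$ and completing (a).
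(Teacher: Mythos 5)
Your proposal is correct and follows essentially the same route as the paper's proof: the same formal reduction to $\atd\circ\A_{GMV}^{-1}$, the same characterization of $\zeta_{HL}$ by the two properties, property (b) by $S_n$-equivariance and comparison of stabilizers, and property (a) by the abacus computation done first in the dominant case and then extended to general $\waf = w\waf_D$ using $\Delta_\waf = \Delta_{\waf_D}$.
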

\section{Outlook}
Given the algebraically defined uniform zeta map $\zeta$ that has the combinatorial interpretation $\zeta_{HL}$ if $\Phi$ is of type $A_{n-1}$ and $m=1$ it is natural to ask for combinatorial interpretations in other types and for larger $m$.
The extended abstract \cite{sulzgruber14typec} supplies an answer for type $C_n$ and $m=1$. For types $B_n$ and $D_n$ similar combinatorial zeta maps will be defined in future work \cite{sulzgruber16parking}.
\section{Acknowledgements}
The author would like to thank Robin Sulzgruber for many interesting conversations and Nathan Williams for encouraging him to write up these results.
Furthermore, the author would like to thank the anonymous referee for their careful reading and helpful comments.
The research was supported by the Austrian Science Foundation FWF, grant S50-N15 in the framework of the Special
Research Program ``Algorithmic and Enumerative Combinatorics'' (SFB F50).

\bibliographystyle{alpha}
\bibliography{literature}

\begin{thebibliography}{ALW14b}

\bibitem[ALW14a]{armstrong14rational}
Drew Armstrong, Nicholas~A. Loehr, and Gregory~S. Warrington.
\newblock {Rational parking functions and Catalan numbers}.
\newblock 2014.
\newblock arXiv:1403.1845.

\bibitem[ALW14b]{armstrong14sweep}
Drew Armstrong, Nicholas~A. Loehr, and Gregory~S. Warrington.
\newblock {Sweep maps: A continuous family of sorting algorithms}.
\newblock 2014.
\newblock arXiv:1406.1196.

\bibitem[And02]{anderson02core}
Jaclyn Anderson.
\newblock {Partitions which are simultaneously $t_1$ and $t_2$-core}.
\newblock {\em Discrete Mathematics}, 248:237--243, 2002.

\bibitem[Arm13]{armstrong13hyper}
Drew Armstrong.
\newblock {Hyperplane arrangements and diagonal harmonics}.
\newblock {\em Journal of Combinatorics}, 4:157--190, 2013.

\bibitem[ARR15]{armstrong12parking}
Drew Armstrong, Victor Reiner, and Brendon Rhoades.
\newblock {Parking Spaces}.
\newblock {\em Advances in Mathematics}, 269:647--706, 2015.

\bibitem[Ath05]{athanasiadis05refinement}
Christos~A. Athanasiadis.
\newblock {On a Refinement of the Generalized Catalan Numbers for Weyl Groups}.
\newblock {\em Transactions of the American Mathematical Society},
  357:179--196, 2005.

\bibitem[Bes07]{bessis07finite}
David Bessis.
\newblock {Finite Complex Reflection Arrangements are $K(\pi,1)$}.
\newblock 2007.

\bibitem[Biz54]{bizley54derivation}
M.~T.~L. Bizley.
\newblock {Derivation of a new formula for the number of minimal lattice paths
  from $(0,0)$ to $(km,kn)$ having just $t$ contacts with the line $my=nx$ and
  having no points above this line; and a proof of Grossman's formula for the
  number of paths which may touch but do not rise above this line}.
\newblock {\em Journal of the Institute of Actuaries}, 80:55--62, 1954.

\bibitem[CM16]{carlsson16shuffle}
Erik Carlsson and Anton Mellit.
\newblock A proof of the shuffle conjecture.
\newblock 2016.
\newblock arXiv:1508.06239.

\bibitem[Fan05]{fan96euler}
C.~Kenneth Fan.
\newblock {Euler characteristic of certain affine flag varieties}.
\newblock {\em Transformation Groups}, 135:35--39, 2005.

\bibitem[GM13]{gorsky13jacobian}
Eugene Gorsky and Mikhail Mazin.
\newblock {Compactified Jacobians and $q,t$-Catalan Numbers, I.}
\newblock {\em Journal of Combinatorial Theory, Series A}, 120:49--63, 2013.

\bibitem[GM14]{gorsky14jacobian}
Eugene Gorsky and Mikhail Mazin.
\newblock {Compactified Jacobians and $q,t$-Catalan Numbers, II.}
\newblock {\em Journal of Algebraic Combinatorics}, 39:153--186, 2014.

\bibitem[GMV14]{gorsky14affine}
Eugene Gorsky, Mikhail Mazin, and Monica Vazirani.
\newblock {Affine permutations and rational slope parking functions}.
\newblock 2014.
\newblock arXiv:1403.0303.

\bibitem[Hag08]{haglund08catalan}
Jim Haglund.
\newblock {\em {The $q,t$-Catalan numbers and the space of diagonal
  harmonics}}.
\newblock American Mathematical Society, Providence, RI, 2008.

\bibitem[Hai94]{haiman94diagonal}
Mark~D. Haiman.
\newblock {Conjectures on the quotient ring by diagonal invariants}.
\newblock {\em Journal of Algebraic Combinatorics}, 133:17--76, 1994.

\bibitem[Hum90]{humphreys90reflection}
James~E. Humphreys.
\newblock {\em {Reflection Groups and Coxeter Groups}}.
\newblock Cambridge University Press, Cambridge, 1990.

\bibitem[Rho14]{rhoades12parking}
Brendon Rhoades.
\newblock {Parking Spaces: Fuss analogs}.
\newblock {\em Journal of Algebraic Combinatorics}, 40:417--473, 2014.

\bibitem[Shi87a]{shi87alcoves}
Jian{-}yi Shi.
\newblock {Alcoves corresponding to an affine Weyl group}.
\newblock {\em Journal of the London Mathematical Society}, 35:42--55, 1987.

\bibitem[Shi87b]{shi87sign}
Jian{-}yi Shi.
\newblock {Sign types corresponding to an affine Weyl group}.
\newblock {\em Journal of the London Mathematical Society}, 35:56--74, 1987.

\bibitem[Shi97]{shi97number}
Jian{-}yi Shi.
\newblock {The number of $\oplus$-sign types}.
\newblock {\em Quarterly Journal of Mathematics}, 48:93--105, 1997.

\bibitem[Som97]{sommers97family}
Eric~N. Sommers.
\newblock {A family of affine Weyl group representations}.
\newblock {\em Transformation Groups}, 2:375--390, 1997.

\bibitem[Som05]{sommers05stable}
Eric~N. Sommers.
\newblock {$\mathfrak{b}$-Stable Ideals in the Nilradical of a Borel
  Subalgebra}.
\newblock {\em Canadian Mathematical Bulletin}, 48:460--472, 2005.

\bibitem[ST15]{sulzgruber14typec}
Robin Sulzgruber and Marko Thiel.
\newblock {Type $C$ parking functions and a zeta map}.
\newblock {\em Discrete Mathematics and Theoretical Computer Science
  (Proceedings of FPSAC 2015)}, pages 475--486, 2015.

\bibitem[ST16]{sulzgruber16parking}
Robin Sulzgruber and Marko Thiel.
\newblock {On parking functions and the zeta map in types $B$, $C$ and $D$}.
\newblock 2016.
\newblock In preparation.

\bibitem[Thi14]{thiel13hf}
Marko Thiel.
\newblock {On the $H$-triangle of generalised nonnesting partitions}.
\newblock {\em European Journal of Combinatorics}, 39:244--255, 2014.

\bibitem[TW15]{thiel15strange}
Marko Thiel and Nathan Williams.
\newblock {Strange Expectations}.
\newblock 2015.
\newblock {arXiv:1508.05293}.

\bibitem[Wil15]{williams15sweep}
Nathan Williams.
\newblock {Sweeping up Zeta}.
\newblock 2015.

\bibitem[Yos04]{yoshinaga04characterization}
Masahiko Yoshinaga.
\newblock {Characterization of a free arrangement and conjecture of Edelman and
  Reiner}.
\newblock {\em Inventiones Mathematicae}, 157:449--454, 2004.

\end{thebibliography}

\end{document}